\def\titlerunning#1{\gdef\titrun{#1}}
\def\author#1{\gdef\autrun{\def\and{\unskip, }#1}\gdef\@author{#1}}
\def\email#1{E-mail: \href{mailto:#1}{#1}}
\def\subjclass#1{\par\bigskip\noindent\textsl{Mathematics Subject Classification 2020. }#1}
\def\keywords#1{\par\smallskip\noindent\textsl{Keywords and Phrases. }#1}
\newenvironment{dedication}{\itshape\center}{\par\medskip}
\newenvironment{acknowledgments}{\bigskip\small\noindent\textit{Acknowledgments.}}{\par}
\newtheorem{thm}{Theorem}[section]
\newtheorem{lem}[thm]{Lemma}
\theoremstyle{definition}
\numberwithin{equation}{section}
\begin{document}

\titlerunning{$g$-invariant on unary Hermitian lattices over imaginary quadratic fields}

\title{\textbf{$g$-invariant on unary Hermitian lattices \\ over imaginary quadratic fields \\ with class number $2$ or $3$}}

\author{Jingbo Liu}

\date{}

\maketitle

\begin{dedication}
Department of Mathematical, Physical, and Engineering Sciences, \\ Texas A\&M University-San Antonio, San Antonio, Texas 78224, USA \\ \email{jliu@tamusa.edu}
\end{dedication}


\subjclass{Primary 11E39. Secondary 11E41, 11Y40.}

\keywords{Waring's problem, unary Hermitian lattices, sums of norms.}

\begin{abstract}
In this paper, we study the unary Hermitian lattices over imaginary quadratic fields.
Let $E=\mathbb{Q}\big(\sqrt{-d}\big)$ be an imaginary quadratic field for a square-free positive integer $d$, and let $\mathcal{O}$ be its ring of integers.
For each positive integer $m$, let $I_m$ be the free Hermitian lattice over $\mathcal{O}$ with an orthonormal basis, let $\mathfrak{S}_d(1)$ be the set consisting of all positive definite integral unary Hermitian lattices over $\mathcal{O}$ that can be represented by some $I_m$, and let $g_d(1)$ be the smallest positive integer such that all Hermitian lattices in $\mathfrak{S}_d(1)$ can be uniformly represented by $I_{g_d(1)}$.
The main results of this paper determine the explicit form of $\mathfrak{S}_d(1)$ and the exact value of $g_d(1)$ for all imaginary quadratic fields $E$ with class number $2$ or $3$, generalizing naturally the {\sf Lagrange}'s {\sf four-square theorem}.
\end{abstract}


\section{Introduction}\label{Sec:Int}
A positive integer $a$ is said to be represented by a quadratic form $f$, (often) written as $a\to f$, provided the Diophantine equation $f(\vec{x})=a$ has an integral solution.
A typical question is to determine those positive integers which can be represented by the form $I_m:=x_1^2+\cdots+x_m^2$, where $m$ is a positive integer.
In 1770, Lagrange proved the famous {\sf four-square theorem}: the quadratic form $x_1^2+x_2^2+x_3^2+x_4^2(=I_4)$ represents all positive integers; such quadratic forms are said to be {\sl universal} in the literature.
Noticing $7$ can not be a sum of any three integer squares, it is clear the sum of four squares is optimal for the universality over $\mathbb{Z}$.
This result has been generalized in several directions, and one important direction is to consider the representations of sums of squares in totally real number fields: a positive definite integral quadratic form over a totally real number field $F$ is said to be universal if it represents all totally positive integers in this field.
In 1928, G\"{o}tzky \cite{fG} proved that $x_1^2+x_2^2+x_3^2+x_4^2$ is universal over $\mathbb{Q}\big(\sqrt{5}\big)$.
Later, Maass \cite{hM} further proved the universality of $x_1^2+x_2^2+x_3^2$ over $\mathbb{Q}\big(\sqrt{5}\big)$.
However, this will not occur in any other field $F$, as Siegel \cite{clS} proved that $F$ admits a sum of squares which is universal if and only if either $F=\mathbb{Q}$ or $F=\mathbb{Q}\big(\sqrt{5}\big)$.

In this paper, we will consider the Hermitian analog of this representation problem over imaginary quadratic fields.
Let $E=\mathbb{Q}\big(\sqrt{-d}\big)$ be an imaginary quadratic field with $d$ a square-free positive integer, and denote by $\mathcal{O}$ its ring of integers.
Knowing that $\mathcal{O}$ is not necessarily a principle ideal domain in general, a positive integer $a$ corresponds to an unary Hermitian lattice ($\mathcal{O}v$ with the Hermitian map $h$ satisfying $h(v)=a$), but not vice versa; so, instead of considering only positive integers, the more general concept of unary Hermitian lattices will be utilized.
For each positive integer $m$, denote by $I_m$ the free Hermitian lattice of rank $m$ having an orthonormal basis; through the standard correspondence between Hermitian forms and free Hermitian lattices, $I_m$ corresponds to the Hermitian form $x_1\overline{x_1}+\cdots+x_m\overline{x_m}$.

Over the imaginary quadratic fields $E=\mathbb{Q}\big(\sqrt{-d}\big)$ with class number $1$, {\sl i.e.}, those fields by $d=1,2,3,7,11,19,43,67,163$, each positive definite integral unary Hermitian lattice is of the form $L=\mathcal{O}v$, where $h(v)$ is a positive integer.
$L$ is represented by $I_m$ if and only if $h(v)$ can be written as a sum of $m$ norms of integers in $\mathcal{O}$.
In addition, from Kim, Kim and Park \cite[Table 3]{KKP}, one easily observes the universality of $I_2$ when $d=1,2,3,7,11$ and that of $I_3$ when $d=19$; when $d=43,67,163$, the universality of $I_4$ is a direct consequence of the {\sf Lagrange}'s {\sf four-square theorem} as the norm and the square of a rational integer are the same.

For an imaginary quadratic field $E$ with class number $\mathtt{c}>1$, assume the integral ideals $\mathfrak{U}_1=\mathcal{O},\mathfrak{U}_2,\ldots,\mathfrak{U}_{\mathtt{c}}$ are the representatives of the ideal classes in $E$.
It is well known that $\mathfrak{U}_j\overline{\mathfrak{U}_j}=k_j\mathcal{O}$ for an integer $k_j\in\mathbb{Z}_{>0}$, and $\mathfrak{U}_j=k_j\mathcal{O}+\alpha_j\mathcal{O}$ (\cite [22:5]{otO}) for an integer $\alpha_j\in\mathcal{O}$, $1\leq j\leq\mathtt{c}$.
Let $L_j=\mathfrak{U}_jv_j$ be a positive definite unimodular unary Hermitian lattice with $h(v_j)=1/k_j$.
For an arbitrary positive definite integral unary Hermitian lattice $\mathfrak{U}_jw$, noting $\mathfrak{U}_j\overline{\mathfrak{U}_j}h(w)\subseteq\mathcal{O}$, one has $k_jh(w)\in\mathbb{Z}$; so, $\mathfrak{U}_jw$ is in the isometry class of the lattice $L_j^{r_j}:=\mathfrak{U}_jv_j^{r_j}$ with $v_j^{r_j}$ a vector satisfying $h\big(v_j^{r_j}\big)=r_jh(v_j)$ for a positive integer $r_j$.
An unary Hermitian lattice $L$ is said to be represented by some $I_m$, provided there is an injective linear map from $L$ to $I_m$ preserving Hermitian maps.
Hence, it is natural to ask if there is a positive integer $m$ such that $I_m$ can represent all positive definite integral unary Hermitian lattices over $\mathcal{O}$.
The answer, unfortunately, is no: we will see later that a positive definite unimodular Hermitian lattice $\mathfrak{U}v$ can be represented by some $I_m$ if and only if $\mathfrak{U}$ is a principle ideal.
Thus, we shall restrict our attention to a smaller set as done in \cite{BCIL} and \cite{jL}.
Denote by $\mathfrak{S}_d(1)$ the set consisting of all positive definite integral unary Hermitian lattices over $\mathcal{O}$ which can be represented by some $I_m$, and by $g_d(1)$ the smallest positive integer such that all Hermitian lattices in $\mathfrak{S}_d(1)$ are uniformly represented by $I_{g_d(1)}$.
In \cite{jL}, I found an upper bound of $g_d(1)$ for all imaginary quadratic fields $E$; in this paper, I determine the explicit form of the set $\mathfrak{S}_d(1)$ and the exact value of its associated integer $g_d(1)$ for all imaginary quadratic fields $E$ with class number $2$ or $3$.

In Section 2, we introduce the geometric language of Hermitian spaces and Herm-itian lattices.
In Section 3, we determine the set $\mathfrak{S}_d(1)$ and the value of $g_d(1)$ for every imaginary quadratic field $E$ with class number $2$, and in Section 4, we do so for every imaginary quadratic field $E$ with class number $3$.
Finally, we list the representatives of ideal classes of $E$ with class number $2$ or $3$ in the Appendices.

\section{Preliminaries}\label{Sec:Pre}
Throughout this paper, the notations and terminologies for lattices from the classical monograph of O'Meara \cite{otO} will be adopted.
For background information and terminologies specific to the Hermitian setting, one may consult the work of Shimura \cite{gS} and Gerstein \cite{ljG}.

Let $E=\mathbb{Q}\big(\sqrt{-d}\big)$ be an imaginary quadratic field for a square-free positive integer $d$, and let $\mathcal{O}$ be its ring of integers.
Then, one has $\mathcal{O}=\mathbb{Z}+\mathbb{Z}\omega$ with $\omega=\omega_d:=\sqrt{-d}$ if $d\equiv1,2~(\mathrm{mod}~4)$ and $\omega=\omega_d:=\big(1+\sqrt{-d}\big)/2$ if $d\equiv3~(\mathrm{mod}~4)$.
For each $\alpha\in E$, denote by $\overline\alpha$ the complex conjugate of $\alpha$ and define the norm of $\alpha$ to be $N(\alpha):=\alpha\overline\alpha$.
A Hermitian space $(V,h)$ is a vector space $V$ over $E$, which admits a Hermitian map $h:V\times V\to E$ satisfying, for all $\alpha,\beta\in E$ and $v,v_1,v_2,w\in V$,
\begin{description}
\item[~~(1)] $h(v,w)=\overline{h(w,v)}$;
\item[~~(2)] $h(\alpha v_1+\beta v_2,w)=\alpha h(v_1,w)+\beta h(v_2,w)$.
\end{description}
Write $h(v):=h(v,v)$ for brevity.
From condition {\bf(1)}, one has $h(v)=\overline{h(v)}$, and thus, $h(v)\in\mathbb{Q}$ for every $v\in V$.

A Hermitian lattice $L$ is defined as a finitely generated $\mathcal{O}$-module in the Hermitian space $V$, and $L$ is said to be {\sl a lattice on} $V$ whenever $V=EL$.
We shall assume in the sequel all Hermitian lattices $L$ are positive definite integral in the sense that $h(v)\in\mathbb{Z}_{>0}$ for all nonzero elements $v\in L$ and that $h(v,w)\in\mathcal{O}$ for all pairs of elements $v,w\in L$.
$L$ is said to be represented by another Hermitian lattice $K$, written as $L\to K$, provided there exists an injective linear map $\sigma:L\to K$ which preserves Hermitian maps, {\sl i.e.}, $h_L(v,w)=h_K(\sigma(v),\sigma(w))$ for all $v,w\in L$.
We thus call $\sigma$ a representation from $L$ to $K$.
In addition, $L$ and $K$ are said to be isometric if $\sigma$ is bijective, and the isometry class containing $L$ is denoted by $\mathrm{cls}(L)$.

Assume that $V$ is an $n$-dimensional Hermitian space and $L$ is a Hermitian lattice on $V$.
Then, there exist fractional $\mathcal{O}$-ideals $\mathfrak{A}_1,\ldots,\mathfrak{A}_n$ and a basis $\{v_1,\ldots,v_n\}$ of $V$ such that $L=\mathfrak{A}_1v_1+\cdots+\mathfrak{A}_nv_n$.
In particular, if we have $\mathfrak{A}_1=\cdots=\mathfrak{A}_n=\mathcal{O}$ for some basis $\{v_1,\ldots,v_n\}$, then we call $L$ a {\sl free} Hermitian lattice and associate to it a Gram matrix $M_L=(h(v_l,v_{l'}))$.
For example, a free unary Hermitian lattice $L=\mathcal{O}v$ with $h(v)=a$ can be identified with the Gram matrix $M_L=\langle a\rangle$, and $L\to I_m$ if and only if one can write $a$ as a sum of $m$ norms of integers in $\mathcal{O}$.

The {\sl scale} $\mathfrak{s}L$ of $L$ is the fractional $\mathcal{O}$-ideal generated by the set $\{h(v,w):v,w\in L\}$, and the {\sl volume} of $L$ is the fractional $\mathcal{O}$-ideal $\mathfrak{v}L:=\big(\mathfrak{A}_1\overline{\mathfrak{A}_1}\big)\cdots\big(\mathfrak{A}_n\overline{\mathfrak{A}_n}\big)\mathrm{det}(h(v_l,v_{l'}))$.
Each $\mathfrak{A}_j$ can be written as a product of integral powers of prime ideals in $\mathcal{O}$.
For each prime ideal $\mathfrak{P}$ in $\mathcal{O}$, denote by $p$ the prime number which lies below $\mathfrak{P}$.
Then, one has $\mathfrak{P}\overline{\mathfrak{P}}=p^2\mathcal{O}$ when $p$ is inert in $E$ and $\mathfrak{P}\overline{\mathfrak{P}}=p\mathcal{O}$ otherwise.
Consequently, there exists a unique positive rational number $\delta_L$ with the property that $\mathfrak{v}L=\delta_L\mathcal{O}$, and $\delta_L$ is said to be the {\sl discriminant} of $L$.
Let $\mathcal{I}$ be an $\mathcal{O}$-ideal.
$L$ is said to be $\mathcal{I}$-{\sl modular} if $\mathfrak{s}L=\mathcal{I}$ and $\mathfrak{v}L=\mathcal{I}^n$; in particular, $L$ is said to be {\sl unimodular} when $\mathcal{I}=\mathcal{O}$.

Now, let's briefly go over the main ideas of the proofs.
Knowing that two isometric Hermitian lattices have the same representation properties, from the discussions in the introduction, one only needs to consider the representations of the lattices $L_j^{r_j}=\mathfrak{U}_jv_j^{r_j}$ by $I_m$, with $h\big(v_j^{r_j}\big)=r_j/k_j$ for $1\leq j\leq\mathtt{c}$, where $\mathtt{c}=2$ or $3$.
We need to determine the values of $r_j$ such that $L_j^{r_j}=\mathfrak{U}_jv_j^{r_j}\in\mathfrak{S}_d(1)$ for each $j=1,2,3$.
When $\mathfrak{U}_1=\mathcal{O}$, then $h(v_1)=1$ and $L_1^{r_1}\cong\langle r_1\rangle$ is represented by $I_4$, so that $L_1^{r_1}\in\mathfrak{S}_d(1)$ for every positive integer $r_1$.
When $j=2,3$, rewrite $L_j^{r_j}=\mathfrak{U}_jv_j^{r_j}$ as $\mathcal{O}k_jv_j^{r_j}+\mathcal{O}(s_j+t_j\omega)v_j^{r_j}$ and assume there exists a representation $\sigma:L_j^{r_j}\to I_m$ with $\sigma\big(k_jv_j^{r_j}\big)=\gamma_1z_1+\cdots+\gamma_mz_m$, where $\{z_1,\ldots,z_m\}$ is an orthonormal basis of $I_m$ and $\gamma_\ell=a_\ell+b_\ell\omega\in\mathcal{O}$ for $1\leq\ell\leq m$; then, it follows that
\begin{equation*}
\begin{aligned}
\sigma\big((s_j+t_j\omega)v_j^{r_j}\big)&=\frac{s_j+t_j\omega}{k_j}\sigma\big(k_jv_j^{r_j}\big)\\
&=\frac{s_j+t_j\omega}{k_j}\gamma_1z_1+\cdots+\frac{s_j+t_j\omega}{k_j}\gamma_mz_m.
\end{aligned}
\end{equation*}
By the fact that $\frac{s_j+t_j\hspace{-0.2mm}\omega}{k_j}\gamma_\ell\in\mathcal{O}$, we get congruence conditions on $a_\ell,b_\ell$ for $1\leq\ell\leq m$.
So, $L_j^{r_j}\to I_m$ if and only if there are integers $\gamma_\ell=a_\ell+b_\ell\omega\in\mathcal{O}$ with $a_\ell,b_\ell$ satisfying these congruence conditions such that $r_j/k_j=\sum_{\ell=1}^mN\big(\gamma_\ell/k_j\big)$.
Then, we can use this technique to determine the values of $r_j$ such that $L_j^{r_j}=\mathfrak{U}_jv_j^{r_j}\in\mathfrak{S}_d(1)$ for $j=2,3$, and further determine the value of $g_d(1)$.

The following lemmata play important roles in the proofs of our main results in Sections 3 and 4.

\begin{lem}\label{Lem1}
Let $E=\mathbb{Q}\big(\sqrt{-d}\big)$ be an imaginary quadratic field, and let $\mathcal{O}$ be its ring of integers.
Given an integral ideal $\mathfrak{U}=\mathcal{O}k+\mathcal{O}(s+t\omega)$ with $k\in\mathbb{Z}_{>0}$ and $s+t\omega\in\mathcal{O}$, set $L^r=\mathfrak{U}v^r$ for a positive integer $r$ with $h(v^r)=r/k$.
Suppose there is a representation $\sigma:L^r\to I_m$ with $\sigma(kv^r)=\gamma_1z_1+\cdots+\gamma_mz_m$, where $\{z_1,\ldots,z_m\}$ is an orthonormal basis of $I_m$ and $\gamma_\ell=a_\ell+b_\ell\omega\in\mathcal{O}$ for $1\leq\ell\leq m$.
Then, for $1\leq\ell\leq m$, one has
\begin{description}
\item[~~(1.1)] $k|(sa_\ell-dtb_\ell)$ and $k|(ta_\ell+sb_\ell)$ when $d\equiv1,2~(\mathrm{mod}~4)$;
\item[~~(1.2)] $k|\big(sa_\ell-\frac{1+d}{4}tb_\ell\big)$ and $k|(ta_\ell+(s+t)b_\ell)$ when $d\equiv3~(\mathrm{mod}~4)$.
\end{description}
\end{lem}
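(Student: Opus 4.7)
The plan is to exploit the fact that $s+t\omega$ lies in $\mathfrak{U}$, so $(s+t\omega)v^r\in L^r$, and then push this element through $\sigma$ using $E$-linearity.

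Specifically, I would start from the observation $(s+t\omega)v^r=\frac{s+t\omega}{k}\cdot kv^r$, which is already noted in the setup just above the lemma. Applying $\sigma$, which is $\mathcal{O}$-linear and extends to $E$-linearity on the ambient spaces, gives
\[
\sigma\bigl((s+t\omega)v^r\bigr)=\sum_{\ell=1}^{m}\frac{(s+t\omega)\gamma_\ell}{k}\,z_\ell.
\]
Since $(s+t\omega)v^r\in L^r$, its image lies in $I_m=\mathcal{O}z_1\oplus\cdots\oplus\mathcal{O}z_m$, so each coefficient $(s+t\omega)\gamma_\ell/k$ must belong to $\mathcal{O}$. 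This is the single conceptual input; everything else is a symbolic calculation of $(s+t\omega)(a_\ell+b_\ell\omega)$ in the two cases for $d\pmod 4$ and reading off the divisibility conditions from writing the product in the $\mathbb{Z}$-basis $\{1,\omega\}$ of $\mathcal{O}$.

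For (1.1), when $d\equiv 1,2\pmod 4$, I use $\omega^2=-d$ to expand
\[
(s+t\omega)(a_\ell+b_\ell\omega)=(sa_\ell-dtb_\ell)+(ta_\ell+sb_\ell)\omega,
\]
and demanding that division by $k$ keep both coordinates in $\mathbb{Z}$ yields exactly $k\mid(sa_\ell-dtb_\ell)$ and $k\mid(ta_\ell+sb_\ell)$. For (1.2), when $d\equiv 3\pmod 4$, I use the minimal relation $\omega^2=\omega-(1+d)/4$ to get
\[
(s+t\omega)(a_\ell+b_\ell\omega)=\Bigl(sa_\ell-\tfrac{1+d}{4}tb_\ell\Bigr)+\bigl(ta_\ell+(s+t)b_\ell\bigr)\omega,
\]
which, together with the same integrality requirement, yields the two claimed divisibilities.

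There is no real obstacle here; the only point that requires care is correctly expanding $\omega^2$ in the $d\equiv 3\pmod 4$ case so that the second coordinate picks up the extra $tb_\ell\omega$ term, which is precisely what turns $sb_\ell$ into $(s+t)b_\ell$. After that, the lemma follows immediately from the fact that the $\mathbb{Z}$-basis $\{1,\omega\}$ of $\mathcal{O}$ makes "being in $\mathcal{O}$" equivalent to integrality of both coordinates.
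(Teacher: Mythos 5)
Your proposal is correct and follows essentially the same route as the paper: observe that $(s+t\omega)v^r\in L^r$ forces each coefficient $\frac{s+t\omega}{k}\gamma_\ell$ to lie in $\mathcal{O}$, then expand $(s+t\omega)(a_\ell+b_\ell\omega)$ in the $\mathbb{Z}$-basis $\{1,\omega\}$ using $\omega^2=-d$ or $\omega^2=\omega-\frac{1+d}{4}$ according to the residue of $d$ modulo $4$. The computations match the paper's exactly, so there is nothing to add.
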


\begin{proof}
Note that $\sigma((s+t\omega)v^r)\in I_m$ if and only if $\frac{s+t\hspace{-0.27mm}\omega}{k}\gamma_\ell\in\mathcal{O}$ for $1\leq\ell\leq m$.
When $d\equiv1,2~(\mathrm{mod}~4)$, then $\omega=\sqrt{-d}$ and
\begin{equation*}
\begin{aligned}
(s+t\omega)\gamma_\ell&=\big(s+t\sqrt{-d}\big)\big(a_\ell+b_\ell\sqrt{-d}\big)\\
&=(sa_\ell-dtb_\ell)+(ta_\ell+sb_\ell)\omega;
\end{aligned}
\end{equation*}
so, $\frac{s+t\hspace{-0.27mm}\omega}{k}\gamma_\ell\in\mathcal{O}$ if and only if $k|(sa_\ell-dtb_\ell)$ and $k|(ta_\ell+sb_\ell)$ for each $1\leq\ell\leq m$.
In the other case where $d\equiv3~(\mathrm{mod}~4)$, we have $\omega=\big(1+\sqrt{-d}\big)/2$ and
\begin{equation*}
\begin{aligned}
(s+t\omega)\gamma_\ell&=\bigg(s+t\frac{1+\sqrt{-d}}{2}\bigg)\bigg(a_\ell+b_\ell\frac{1+\sqrt{-d}}{2}\bigg)\\
&=\bigg(sa_\ell-\frac{1+d}{4}tb_\ell\bigg)+(ta_\ell+(s+t)b_\ell)\omega;
\end{aligned}
\end{equation*}
therefore, $\frac{s+t\hspace{-0.27mm}\omega}{k}\gamma_\ell\in\mathcal{O}$ if and only if $k|\big(sa_\ell-\frac{1+d}{4}tb_\ell\big)$ and $k|(ta_\ell+(s+t)b_\ell)$ for each $1\leq\ell\leq m$.
\end{proof}

\begin{lem}\label{Lem2}
Suppose $E=\mathbb{Q}\big(\sqrt{-d}\big)$ is an imaginary quadratic field, and $\mathcal{O}$ is its ring of integers.
Let $m_d$ be the smallest number of norms whose sum represents all positive integers.
Then, we have
\begin{description}
\item[~~(2.1)] when $d=1, 2, 3, 7, 11$, then $m_d=2$;
\item[~~(2.2)] when $d\equiv1,2~(\mathrm{mod}~4)$, then $m_d=3$ if $5\leq d\leq7$ and $m_d=4$ if $d\geq8$;
\item[~~(2.3)] when $d\equiv3~(\mathrm{mod}~4)$,  then $m_d=3$ if $15\leq d\leq27$ and $m_d=4$ if $d\geq28$.
\end{description}
\end{lem}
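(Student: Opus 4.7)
The plan is to establish matching upper and lower bounds on $m_d$ in each case.

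First I would prove the uniform upper bound $m_d\le 4$. Since the norm of any rational integer $a\in\mathbb Z\subset\mathcal O$ equals $a^2$, setting every $b_\ell=0$ in a sum of four norms recovers the ordinary sum of four squares, so Lagrange's four-square theorem shows every positive integer is a sum of four norms. To get matching lower bounds in the regimes where $m_d=4$ is claimed---namely (2.2) with $d\ge 8$ and (2.3) with $d\ge 28$---I would use $n=7$ as a test integer. A direct minimization shows the smallest positive value of $N(a+b\omega)$ with $b\ne 0$ equals $d$ when $d\equiv 1,2\pmod 4$ and equals $(1+d)/4$ when $d\equiv 3\pmod 4$, and both thresholds exceed $7$ in the stated ranges (using that a square-free $d\equiv 3\pmod 4$ with $d\ge 28$ satisfies $d\ge 31$). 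Hence any representation $7=\sum_{\ell=1}^3 N(\gamma_\ell)$ would force every $\gamma_\ell\in\mathbb Z$, reducing to the classically impossible $7=a_1^2+a_2^2+a_3^2$. Thus $m_d=4$ in those ranges.

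It remains to handle $d\in\{5,6\}$ from (2.2) and $d\in\{15,19,23\}$ from (2.3), where $m_d=3$ is claimed. The lower bound $m_d\ge 3$ is immediate: these $d$ lie outside $\{1,2,3,7,11\}$, so part (2.1) rules out $m_d=2$; explicitly, a small integer (e.g., $n=3$ when $d=5,6$) is easily checked not to be a sum of two norms. For the upper bound, I would show that the six-variable form
\begin{equation*}
F_d(\vec a,\vec b)\ =\ \sum_{\ell=1}^{3}N(a_\ell+b_\ell\omega)
\end{equation*}
is universal over $\mathbb Z_{>0}$. Invoking Legendre's three-square theorem, for each positive $n$ I seek a small tuple $(b_1,b_2,b_3)\in\mathbb Z^3$ such that $n$ minus the $\omega$-contribution is a nonnegative integer avoiding the forbidden shape $4^a(8k+7)$. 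A case analysis on $n\bmod 8$ and on the $2$-adic valuation of $n$, using test tuples drawn from $\{(0,0,0),(1,0,0),(1,1,0),(1,1,1)\}$, produces an admissible choice in each residue class; the finitely many $n<3d$ are verified directly from a short table of admissible norms.

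The principal obstacle is this universality verification for the five specific values $d\in\{5,6,15,19,23\}$: although each individual case is elementary, one must enumerate every residue class of $n\bmod 8$ together with every $2$-adic depth, and separately dispose of the short initial range $n<3d$. The remaining steps (Lagrange for $m_d\le 4$ and the $n=7$ obstruction for $m_d\ge 4$) are essentially routine.
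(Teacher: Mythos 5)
Your skeleton matches the paper's for three of the four bounds: $m_d\le 4$ via Lagrange by setting every $b_\ell=0$; $m_d\ge 4$ for $d\ge 8$ (resp.\ $d\ge 31$) via the integer $7$, since the minimal norm with $b\ne 0$ exceeds $7$ and $7$ is not a sum of three squares; and $m_d\ge 3$ via the integer $3$. Two points need attention, however. First, you never prove part (2.1): the assertion that two norms suffice when $d=1,2,3,7,11$ is itself part of the statement, and your remark that ``part (2.1) rules out $m_d=2$'' for other $d$ misreads it --- (2.1) asserts sufficiency of two norms for five fields, not insufficiency elsewhere --- though your direct check that $3$ is not a sum of two norms repairs that particular use. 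The paper disposes of (2.1) by citing Kim--Kim--Park; you must either do likewise or verify those five quaternary/senary forms.

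Second, the upper bound $m_d\le 3$ is where you genuinely diverge from the paper, and your sketch is not correct as stated for $d\equiv 3\pmod 4$. With $\omega=(1+\sqrt{-d})/2$, fixing a tuple $(b_1,b_2,b_3)$ containing an odd entry leaves the residual form $\sum_\ell(a_\ell^2+a_\ell b_\ell)$ in the $a_\ell$, which is a mixed sum of (twice) triangular numbers and squares, not a sum of three squares; so the criterion ``avoid $4^a(8k+7)$'' does not apply to your tuples $(1,0,0)$, $(1,1,0)$, $(1,1,1)$ for $d=15,19,23$. You can repair this either by taking all $b_\ell$ even, so that $N(a+2c\omega)=(a+c)^2+dc^2$ and Legendre applies to $n-d\sum c_\ell^2$ (note $n-2d\equiv 1\pmod 8$ when $n\equiv 7\pmod 8$, and $n-d$ is odd and $\equiv 1\pmod 4$ when $4\mid n$), at the cost of checking an initial segment of length on the order of $2d$ by hand, or by invoking the universality of $2T_x+y^2+z^2$ and $2T_x+2T_y+z^2$ as the paper does in its later sections. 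The paper itself sidesteps this bookkeeping entirely: for $d=5,6$ it quotes the Conway--Schneeberger fifteen theorem to get universality of $\langle 1,1,1,d\rangle$, and for the $d\equiv 3\pmod 4$ cases it applies the Bhargava--Hanke $290$-theorem, observing that every number in the criterion set except $7,15,23,31$ is a sum of three squares and checking those four in a short table. Your elementary Legendre route is viable and avoids the heavy citations, but only once the parity issue above is handled correctly.
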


\begin{proof}
When $d=1, 2, 3, 7, 11$, then $m_d=2$ follows from \cite[Table 3]{KKP}.

When $d\equiv1,2~(\mathrm{mod}~4)$, we easily derive $N\big(a+b\sqrt{-d}\big)=a^2+db^2$ for an integer $a+b\sqrt{-d}\in\mathcal{O}$.
Thus,
\begin{equation*}
\begin{aligned}
&\sum_{\ell=1}^3N\big(a_\ell+b_\ell\sqrt{-d}\big)\\
=&\,a_1^2+db_1^2+a_2^2+db_2^2+a_3^2+db_3^2\cong\langle1,1,1,d,d,d\rangle.
\end{aligned}
\end{equation*}
For $5\leq d\leq7$, we know $\langle1,1,1,d\rangle$ is universal from Conway \cite{jhC} (or \cite[Theorem 1]{mB}) but $3$ cannot be written as a sum of two norms; hence, we need at least three norms to represent all positive integers.
For $d\geq8$, seeing $7$ cannot be written as a sum of three norms, we need at least four norms to represent all positive integers.

When $d\equiv3~(\mathrm{mod}~4)$, we instead have $N\Big(a+b\frac{1+\sqrt{-d}}{2}\Big)=a^2+ab+\frac{1+d}{4}b^2$ for an integer $a+b\frac{1+\sqrt{-d}}{2}\in\mathcal{O}$.
Thus,
\begin{equation*}
\begin{aligned}
&\sum_{\ell=1}^3N\bigg(a_\ell+b_\ell\frac{1+\sqrt{-d}}{2}\bigg)\\
=&\,a_1^2+a_1b_1+\frac{1+d}{4}b_1^2+a_2^2+a_2b_2+\frac{1+d}{4}b_2^2+a_3^2+a_3b_3+\frac{1+d}{4}b_3^2,
\end{aligned}
\end{equation*}
and one can use Bhargava and Hanke \cite[Theorem 1]{BH} to check its universality.
Among the criterion set (see \eqref{Eq2.2} below), all the numbers are sums of three squares except for $7,15,23,31$.
Set $f_d(a_1,b_1,a_2,b_2,a_3,b_3):=\sum_{\ell=1}^3N\Big(a_\ell+b_\ell\frac{1+\sqrt{-d}}{2}\Big)$ for $15\leq d\leq27$; the table below shows $7,5,23,31$ are represented by some sums of three norms.
Thus, the sum of three norms is universal and optimal, since $3$ cannot be written as a sum of two norms.
As $\frac{1+d}{4}>7$ if $d\geq28$ and $7$ cannot be written as a sum of three norms, we need at least four norms to represent all positive integers.
\end{proof}

{\small\begin{center}
\setlength{\arrayrulewidth}{0.2mm}
\setlength{\tabcolsep}{2pt}
\renewcommand{\arraystretch}{0.81}
\begin{tabular}{|m{1.56cm}m{3.44cm}|m{1.56cm}m{3.44cm}|}
\hline\vskip2pt \centering\multirow{5}{*}{$\boxed{d=15}$} & \vskip2pt $f_{15}(1,1,1,0,0,0)=7$  & \vskip2pt \centering\multirow{5}{*}{$\boxed{d=19}$} & \vskip2pt $f_{19}(1,1,0,0,0,0)=7$  \\[0.6ex]
                                                          &           $f_{15}(2,1,1,0,2,0)=15$ &                                                     &
$f_{19}(1,1,2,0,2,0)=15$ \\[0.6ex]
                                                          &          $f_{15}(1,1,1,0,4,0)=23$  &                                                     &
$f_{19}(1,1,4,0,0,0)=23$ \\[0.6ex]
                                                          &          $f_{15}(1,1,5,0,0,0)=31$  &                                                     &
$f_{19}(5,0,1,0,0,1)=31$ \\[0.6ex]

\hline\vskip2pt \centering\multirow{5}{*}{$\boxed{d=23}$} & \vskip2pt $f_{23}(1,0,0,1,0,0)=7$  & \vskip2pt \centering\multirow{5}{*}{$\boxed{d=27}$} & \vskip2pt $f_{27}(0,1,0,0,0,0)=7$  \\[0.6ex]
                                                          &           $f_{23}(1,0,0,1,1,1)=15$ &                                                     &
$f_{27}(0,1,2,0,2,0)=15$ \\[0.6ex]
                                                          &           $f_{23}(1,0,0,1,4,0)=23$ &                                                     &
$f_{27}(0,1,4,0,0,0)=23$ \\[0.6ex]
                                                          &           $f_{23}(5,0,0,1,0,0)=31$ &                                                     &
$f_{27}(2,1,3,0,3,0)=31$ \\[0.6ex]
\hline
\end{tabular}
\end{center}}

\begin{lem}\label{Lem3}
Assume $E=\mathbb{Q}\big(\sqrt{-d}\big)$ is an imaginary quadratic field, and $\mathcal{O}$ is its ring of integers.
Let $r_j$ and $k_j$ be defined as above for $j=2,3$.
Then, for each positive integer $r_j\equiv0~(\mathrm{mod}~k_j)$, there are integers $\gamma_1=a_1+b_1\omega,\ldots,\gamma_{m_d}=a_{m_d}+b_{m_d}\omega\in\mathcal{O}$, with $a_\ell,b_\ell$ satisfying the conditions given in Lemma \ref{Lem1} and with $m_d$ given in Lemma \ref{Lem2}, such that $r_j/k_j=\sum_{\ell=1}^{m_d}N\big(\gamma_\ell/k_j\big)$.
\end{lem}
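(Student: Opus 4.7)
The plan is to exploit a simple but effective choice for the $\gamma_\ell$: take them all to be divisible by $k_j$ in $\mathcal{O}$. Concretely, I would try writing $\gamma_\ell = k_j\beta_\ell$ for some $\beta_\ell = c_\ell + e_\ell\omega \in \mathcal{O}$, which forces $a_\ell = k_jc_\ell$ and $b_\ell = k_je_\ell$. With $k_j \mid a_\ell$ and $k_j \mid b_\ell$, every divisibility requirement listed in Lemma \ref{Lem1} (in both the $d\equiv 1,2\pmod 4$ and $d\equiv 3\pmod 4$ cases) becomes trivial, so the congruence conditions place no restriction on $\beta_\ell$.

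With this reduction, the target equality simplifies dramatically. Observe that
\begin{equation*}
N\bigl(\gamma_\ell/k_j\bigr) \;=\; N(\gamma_\ell)/k_j^2 \;=\; N(k_j\beta_\ell)/k_j^2 \;=\; N(\beta_\ell),
\end{equation*}
so the required identity $r_j/k_j = \sum_{\ell=1}^{m_d} N(\gamma_\ell/k_j)$ becomes
\begin{equation*}
r_j/k_j \;=\; \sum_{\ell=1}^{m_d} N(\beta_\ell).
\end{equation*}
The hypothesis $k_j \mid r_j$ guarantees that the left-hand side is a positive integer. Hence the whole problem reduces to representing the positive integer $r_j/k_j$ as a sum of $m_d$ norms of integers in $\mathcal{O}$, which is precisely the content of Lemma \ref{Lem2}: by definition, $m_d$ is the smallest number of norms whose sum represents \emph{every} positive integer, and in particular represents $r_j/k_j$.

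I would therefore structure the proof as: (i) set up the ansatz $\gamma_\ell = k_j\beta_\ell$ and verify in two short lines that the Lemma \ref{Lem1} congruences are automatic for such $\gamma_\ell$, separately citing the $d\equiv 1,2\pmod 4$ and $d\equiv 3\pmod 4$ formulas; (ii) compute $N(\gamma_\ell/k_j) = N(\beta_\ell)$; (iii) invoke Lemma \ref{Lem2} to choose $\beta_1,\dots,\beta_{m_d}\in\mathcal{O}$ with $r_j/k_j = \sum_{\ell=1}^{m_d} N(\beta_\ell)$; (iv) conclude by taking $\gamma_\ell = k_j\beta_\ell$. There is essentially no obstacle here — the only subtlety is noticing that the congruence conditions of Lemma \ref{Lem1} admit a uniform trivial solution whenever $k_j$ divides both $a_\ell$ and $b_\ell$, after which Lemma \ref{Lem2} does all the arithmetic work. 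The lemma is really a packaging step that converts the existence of sum-of-norm representations for ordinary integers into the existence of representations of the Hermitian lattices $L_j^{r_j}$ whenever $k_j \mid r_j$.
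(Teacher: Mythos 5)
Your proposal is correct and is essentially identical to the paper's own proof: the paper likewise writes $r_j/k_j=\sum_{\ell=1}^{m_d}N\big(\tilde a_\ell+\tilde b_\ell\omega\big)$ via Lemma \ref{Lem2} and then sets $a_\ell=k_j\tilde a_\ell$, $b_\ell=k_j\tilde b_\ell$, noting that these trivially satisfy the congruences of Lemma \ref{Lem1}. No substantive difference.
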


\begin{proof}
When $r_j\equiv0~(\mathrm{mod}~k_j)$, then $r_j/k_j$ is a positive integer.
From Lemma \ref{Lem2}, we know that $I_{m_d}$ represents all positive integers.
Suppose $r_j/k_j=\sum_{\ell=1}^{m_d}N\big(\tilde{a}_\ell+\tilde{b}_\ell\omega\big)$ for $\tilde{a}_\ell+\tilde{b}_\ell\omega\in\mathcal{O}$.
Then, $a_\ell=k_j\tilde{a}_\ell$ and $b_\ell=k_j\tilde{b}_\ell$ satisfy the congruence conditions in Lemma \ref{Lem1} for $1\leq\ell\leq m_d$, and $r_j/k_j=\sum_{\ell=1}^{m_d}N\big(\gamma_\ell/k_j\big)$.
\end{proof}

Several important results are used frequently in the subsequent discussions to check the universality of some given forms, and I describe them briefly as follows.

To proceed, first recall that the $x$-th triangular number is of the form $T_x=\frac{x(x+1)}{2}$ for an integer $x$.
A quadratic form, or a weighted mixed sum of triangular numbers and squares, is said to be {\sl universal} if it represents all positive integers.
\vskip2pt\noindent $\bullet$ For quadratic forms, the {\sf Conway-Schneeberger fifteen theorem} (see Conway \cite{jhC} and Bhargava \cite[Theorem 1]{mB}) gives a classification of positive definite universal quadratic forms whose corresponding Gram matrices are integral, and this result states that such a quadratic form is universal if and only if it represents the numbers
\begin{equation}\label{Eq2.1}
1,2,3,5,6,7,10,14,15.
\end{equation}
\vskip0pt\noindent $\bullet$ A more general case of positive definite integer-valued quadratic forms (whose Gram matrices are half integral) was considered by Bhargava and Hanke \cite[Theorem 1]{BH}, and the {\sf Bhargava-Hanke 290-theorem} provides the criterion set
\begin{equation}\label{Eq2.2}
\begin{aligned}
&\{1,2,3,5,6,7,10,13,14,15,17,19,21,22,23,26,29,\\
&\,\,\,30,31,34,35,37,42,58,93,110,145,203,290\}
\end{aligned}
\end{equation}
for all such universal forms; that is, a positive definite integer-valued quadratic form is universal if and only if it represents the numbers in the set \eqref{Eq2.2}.
\vskip2pt\noindent $\bullet$ For weighted mixed sums of triangular numbers and squares, Sun \cite[Page 1368]{zS15} provided the list of all universal ternary mixed sums, and we shall use the universality of $2T_x+y^2+z^2$ and $2T_x+2T_y+z^2$ in the sequel.
\vskip2pt\noindent $\bullet$ Occasionally, we also need Jagy, Kaplansky and Schiemann \cite[Table 1]{JKS} to identify regular ternary quadratic forms, and Sun \cite[Theorem 1.2]{zS17} to determine the universal quadratic polynomial of the form $x(ax+1)+y(by+1)+z(cz+1)$.

\section{Computations in the case of class number 2}\label{Sec:Cl2}
In this section, we consider the imaginary quadratic fields $E$ with class number $2$.

\begin{thm}\label{Thm1}
Assume $E=\mathbb{Q}\big(\sqrt{-d}\big)$ is an imaginary quadratic field with class number $2$, and $\mathcal{O}$ is its ring of integers.
Then, the set $\mathfrak{S}_d(1)$ and the value of $g_d(1)$ are listed in the table below
\begin{center}
\setlength{\arrayrulewidth}{0.3mm}
\setlength{\tabcolsep}{3pt}
\renewcommand{\arraystretch}{1.0}
\begin{tabular}{|m{1.68cm}|m{8.96cm}|m{0.96cm}|}
\hline\vskip2pt $\mathbb{Q}\big(\sqrt{-d}\big)$   & \vskip2pt $\mathfrak{S}_d(1)$                                                   & \vskip2pt $g_d(1)$               \\[0.6ex]

\hline\vskip2pt $\mathbb{Q}\big(\sqrt{-5}\big)$   & \vskip2pt $L_1^{r_1}=\mathcal{O}v_1^{r_1}$ with $h\big(v_1^{r_1}\big)=r_1\geq1$ & \vskip2pt $3$                    \\[0.8ex]
                                                  &       $L_2^{r_2}=\mathfrak{U}_2v_2^{r_2}$ with $h\big(v_2^{r_2}\big)=r_2/2$ for $r_2\geq2$                       & \\[0.6ex]

\hline\vskip2pt $\mathbb{Q}\big(\sqrt{-6}\big)$   & \vskip2pt $L_1^{r_1}=\mathcal{O}v_1^{r_1}$ with $h\big(v_1^{r_1}\big)=r_1\geq1$ & \vskip2pt $3$                    \\[0.8ex]
                                                  &       $L_2^{r_2}=\mathfrak{U}_2v_2^{r_2}$ with $h\big(v_2^{r_2}\big)=r_2/2$ for $r_2\geq2$                       & \\[0.6ex]

\hline\vskip2pt $\mathbb{Q}\big(\sqrt{-10}\big)$  & \vskip2pt $L_1^{r_1}=\mathcal{O}v_1^{r_1}$ with $h\big(v_1^{r_1}\big)=r_1\geq1$ & \vskip2pt $4$                    \\[0.8ex]
                                                  &       $L_2^{r_2}=\mathfrak{U}_2v_2^{r_2}$ with $h\big(v_2^{r_2}\big)=r_2/2$ for $r_2\geq2$ and $r_2\neq3$        & \\[0.6ex]

\hline\vskip2pt $\mathbb{Q}\big(\sqrt{-13}\big)$  & \vskip2pt $L_1^{r_1}=\mathcal{O}v_1^{r_1}$ with $h\big(v_1^{r_1}\big)=r_1\geq1$ & \vskip2pt $4$                    \\[0.8ex]
                                                  &       $L_2^{r_2}=\mathfrak{U}_2v_2^{r_2}$ with $h\big(v_2^{r_2}\big)=r_2/2$ for $r_2\geq2$ and $r_2\neq3,5$      & \\[0.6ex]

\hline\vskip2pt $\mathbb{Q}\big(\sqrt{-15}\big)$  & \vskip2pt $L_1^{r_1}=\mathcal{O}v_1^{r_1}$ with $h\big(v_1^{r_1}\big)=r_1\geq1$ & \vskip2pt $3$                    \\[0.8ex]
                                                  &       $L_2^{r_2}=\mathfrak{U}_2v_2^{r_2}$ with $h\big(v_2^{r_2}\big)=r_2/2$ for $r_2\geq2$                       & \\[0.6ex]

\hline\vskip2pt $\mathbb{Q}\big(\sqrt{-22}\big)$  & \vskip2pt $L_1^{r_1}=\mathcal{O}v_1^{r_1}$ with $h\big(v_1^{r_1}\big)=r_1\geq1$ & \vskip2pt $4$                    \\[0.8ex]
                                                  &       $L_2^{r_2}=\mathfrak{U}_2v_2^{r_2}$ with $h\big(v_2^{r_2}\big)=r_2/2$ for $r_2\geq2$ and $r_2\neq3,5,7,9$  & \\[0.6ex]

\hline\vskip2pt $\mathbb{Q}\big(\sqrt{-35}\big)$  & \vskip2pt $L_1^{r_1}=\mathcal{O}v_1^{r_1}$ with $h\big(v_1^{r_1}\big)=r_1\geq1$ & \vskip2pt $4$                    \\[0.8ex]
                                                  &       $L_2^{r_2}=\mathfrak{U}_2v_2^{r_2}$ with $h\big(v_2^{r_2}\big)=r_2/5$ for $r_2\geq3$ and $r_2\neq4$        & \\[0.6ex]

\hline\vskip2pt $\mathbb{Q}\big(\sqrt{-37}\big)$  & \vskip2pt $L_1^{r_1}=\mathcal{O}v_1^{r_1}$ with $h\big(v_1^{r_1}\big)=r_1\geq1$ & \vskip2pt $4$                    \\[0.8ex]
                                                  &       $L_2^{r_2}=\mathfrak{U}_2v_2^{r_2}$ with $h\big(v_2^{r_2}\big)=r_2/2$ for $r_2\geq2$ and $r_2\neq3,5,7,9,$ & \\[0.8ex]
                                                  &       $11,13,15,17$                                                                                              & \\[0.6ex]

\hline\vskip2pt $\mathbb{Q}\big(\sqrt{-51}\big)$  & \vskip2pt $L_1^{r_1}=\mathcal{O}v_1^{r_1}$ with $h\big(v_1^{r_1}\big)=r_1\geq1$ & \vskip2pt $4$                    \\[0.8ex]
                                                  &       $L_2^{r_2}=\mathfrak{U}_2v_2^{r_2}$ with $h\big(v_2^{r_2}\big)=r_2/5$ for $r_2\geq3$ and $r_2\neq4,7$      & \\[0.6ex]
\hline
\end{tabular}
\end{center}

\newpage\noindent{\color{blue}table continued}

\begin{center}
\setlength{\arrayrulewidth}{0.3mm}
\setlength{\tabcolsep}{3pt}
\renewcommand{\arraystretch}{1.0}
\begin{tabular}{|m{1.68cm}|m{8.96cm}|m{0.96cm}|}
\hline\vskip2pt $\mathbb{Q}\big(\sqrt{-58}\big)$  & \vskip2pt $L_1^{r_1}=\mathcal{O}v_1^{r_1}$ with $h\big(v_1^{r_1}\big)=r_1\geq1$ & \vskip2pt $4$                    \\[0.8ex]
                                                  &       $L_2^{r_2}=\mathfrak{U}_2v_2^{r_2}$ with $h\big(v_2^{r_2}\big)=r_2/2$ for $r_2\geq2$ and $r_2\neq3,5,7,9,$ & \\[0.8ex]
                                                  &       $11,13,15,17,19,21,23,25,27$                                                                               & \\[0.6ex]

\hline\vskip2pt $\mathbb{Q}\big(\sqrt{-91}\big)$  & \vskip2pt $L_1^{r_1}=\mathcal{O}v_1^{r_1}$ with $h\big(v_1^{r_1}\big)=r_1\geq1$ & \vskip2pt $4$                    \\[0.8ex]
                                                  &       $L_2^{r_2}=\mathfrak{U}_2v_2^{r_2}$ with $h\big(v_2^{r_2}\big)=r_2/7$ for $r_2\geq5$ and $r_2\neq6,8,9,$   & \\[0.8ex]
                                                  &       $11,16$                                                                                                    & \\[0.6ex]

\hline\vskip2pt $\mathbb{Q}\big(\sqrt{-115}\big)$ & \vskip2pt $L_1^{r_1}=\mathcal{O}v_1^{r_1}$ with $h\big(v_1^{r_1}\big)=r_1\geq1$ & \vskip2pt $4$                    \\[0.8ex]
                                                  &       $L_2^{r_2}=\mathfrak{U}_2v_2^{r_2}$ with $h\big(v_2^{r_2}\big)=r_2/5$ for $r_2\geq5$ and $r_2\neq6,8,9,$   & \\[0.8ex]
                                                  &       $11,13,16,18$                                                                                              & \\[0.6ex]

\hline\vskip2pt $\mathbb{Q}\big(\sqrt{-123}\big)$ & \vskip2pt $L_1^{r_1}=\mathcal{O}v_1^{r_1}$ with $h\big(v_1^{r_1}\big)=r_1\geq1$ & \vskip2pt $4$                    \\[0.8ex]
                                                  &       $L_2^{r_2}=\mathfrak{U}_2v_2^{r_2}$ with $h\big(v_2^{r_2}\big)=r_2/3$ for $r_2\geq3$ and $r_2\neq4,5,7,8,$ & \\[0.8ex]
                                                  &       $10,13,16,19$                                                                                              & \\[0.6ex]

\hline\vskip2pt $\mathbb{Q}\big(\sqrt{-187}\big)$ & \vskip2pt $L_1^{r_1}=\mathcal{O}v_1^{r_1}$ with $h\big(v_1^{r_1}\big)=r_1\geq1$ & \vskip2pt $4$                    \\[0.8ex]
                                                  &       $L_2^{r_2}=\mathfrak{U}_2v_2^{r_2}$ with $h\big(v_2^{r_2}\big)=r_2/7$ for $r_2\geq7$ and $r_2\neq8,9,10,$  & \\[0.8ex]
                                                  &       $12,13,15,16,19,20,23,26,27,30,37$                                                                         & \\[0.6ex]

\hline\vskip2pt $\mathbb{Q}\big(\sqrt{-235}\big)$ & \vskip2pt $L_1^{r_1}=\mathcal{O}v_1^{r_1}$ with $h\big(v_1^{r_1}\big)=r_1\geq1$ & \vskip2pt $4$                    \\[0.8ex]
                                                  &       $L_2^{r_2}=\mathfrak{U}_2v_2^{r_2}$ with $h\big(v_2^{r_2}\big)=r_2/5$ for $r_2\geq5$ and $r_2\neq6,7,8,9,$ & \\[0.8ex]
                                                  &       $11,12,14,16,17,19,21,22,24,27,29,32,34,37,42$                                                             & \\[0.6ex]

\hline\vskip2pt $\mathbb{Q}\big(\sqrt{-267}\big)$ & \vskip2pt $L_1^{r_1}=\mathcal{O}v_1^{r_1}$ with $h\big(v_1^{r_1}\big)=r_1\geq1$ & \vskip2pt $4$                    \\[0.8ex]
                                                  &       $L_2^{r_2}=\mathfrak{U}_2v_2^{r_2}$ with $h\big(v_2^{r_2}\big)=r_2/3$ for $r_2\geq3$ and $r_2\neq4,5,7,8,$ & \\[0.8ex]
                                                  &       $10,11,13,14,16,17,19,20,22,25,28,31,34,37,40,43$                                                          & \\[0.6ex]

\hline\vskip2pt $\mathbb{Q}\big(\sqrt{-403}\big)$ & \vskip2pt $L_1^{r_1}=\mathcal{O}v_1^{r_1}$ with $h\big(v_1^{r_1}\big)=r_1\geq1$ & \vskip2pt $4$                    \\[0.8ex]
                                                  &       $L_2^{r_2}=\mathfrak{U}_2v_2^{r_2}$ with $h\big(v_2^{r_2}\big)=r_2/11$ for $r_2\geq11$ and $r\neq12,14,$   & \\[0.8ex]
                                                  &       $15,16,17,18,19,20,21,23,25,27,28,29,30,32,34,36,38,$                                                      & \\[0.8ex]
                                                  &       $40,41,43,45,47,49,51,54,56,58,60,67,69,71,80,82$                                                          & \\[0.6ex]

\hline\vskip2pt $\mathbb{Q}\big(\sqrt{-427}\big)$ & \vskip2pt $L_1^{r_1}=\mathcal{O}v_1^{r_1}$ with $h\big(v_1^{r_1}\big)=r_1\geq1$ & \vskip2pt $4$                    \\[0.8ex]
                                                  &       $L_2^{r_2}=\mathfrak{U}_2v_2^{r_2}$ with $h\big(v_2^{r_2}\big)=r_2/7$ for $r_2\geq7$ and $r\neq8,9,10,$    & \\[0.8ex]
                                                  &       $11,12,13,15,16,18,19,20,22,23,25,26,27,29,30,32,33,$                                                      & \\[0.8ex]
                                                  &       $36,37,39,40,43,44,46,47,50,53,54,57,60,64,67,71,74,$                                                      & \\[0.8ex]
                                                  &       $81,88$                                                                                                    & \\[0.6ex]
\hline
\end{tabular}
\end{center}
\end{thm}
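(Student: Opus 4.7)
The plan is to treat the two ideal classes separately for each of the eighteen fields listed in the table. For $\mathfrak{U}_1=\mathcal{O}$, the lattice $L_1^{r_1}\cong\langle r_1\rangle$ is represented by a sum of four norms for every $r_1\geq 1$ by Lagrange's four-square theorem (taking the $b$-coordinates to be zero in $\gamma_\ell=a_\ell+b_\ell\omega$), so $L_1^{r_1}\in\mathfrak{S}_d(1)$ for all $r_1\geq 1$; combined with the lower bound $m_d$ from Lemma \ref{Lem2}, this forces $g_d(1)\geq m_d$. All the nontrivial work lies in the second ideal class.

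For each $d$, I would extract from Appendix A the representative $\mathfrak{U}_2=\mathcal{O}k_2+\mathcal{O}(s_2+t_2\omega)$ and invoke Lemma \ref{Lem1} to convert the condition $L_2^{r_2}\to I_m$ into the existence of integers $\gamma_\ell=a_\ell+b_\ell\omega$ satisfying explicit congruences modulo $k_2$, together with the equation
\begin{equation*}
\frac{r_2}{k_2}=\sum_{\ell=1}^{m}N\!\left(\frac{\gamma_\ell}{k_2}\right).
\end{equation*}
Parametrizing the solutions of the congruences transforms this into a concrete integer-valued quadratic form $Q_d^{(m)}$ in $2m$ variables whose values are precisely the attainable $r_2/k_2$. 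When $r_2\equiv0\pmod{k_2}$, Lemma \ref{Lem3} already delivers a representation with $m=m_d$ norms, so the issue is restricted to residues $r_2\not\equiv 0\pmod{k_2}$, where the scaled contributions $N(\gamma_\ell/k_2)$ are genuine fractions and $Q_d^{(m)}$ becomes a weighted mixed sum of squares (and, in the $d\equiv 3\pmod 4$ case, of triangular numbers after completing squares in $a_\ell+b_\ell/2$).

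The central step is then to check universality (or near-universality up to a finite exceptional set) of $Q_d^{(m)}$ with $m=g_d(1)$, using the tools collected at the end of Section \ref{Sec:Pre}: the Conway--Schneeberger fifteen theorem for integer Gram matrices, the Bhargava--Hanke 290-theorem for half-integral Gram matrices, Sun's list for weighted mixed sums of triangular numbers and squares, and the regular-form tables of Jagy--Kaplansky--Schiemann when a ternary form covers all but a computable finite set. For the three fields $d=5,6,15$ (where $m_d=3$), I would verify that the corresponding $Q_d^{(3)}$ already represents every required $r_2/k_2$; for the fifteen remaining fields I would verify $Q_d^{(4)}$ is universal on the relevant residue classes and that no smaller $m$ suffices (by exhibiting some $r_2/k_2$ not represented by $Q_d^{(3)}$). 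Each exceptional $r_2$ listed in the table is excluded because $r_2/k_2$ is not represented by $Q_d^{(4)}$ at all; these are verified by a finite local check, typically a $2$-adic or $p$-adic obstruction where $p\mid k_2$, combined with a small direct search.

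The main obstacle is the last few rows ($d=187,235,267,403,427$), where $k_2\in\{3,5,7,11\}$ is relatively large, the associated quaternary form $Q_d^{(4)}$ has half-integral Gram matrix with large discriminant, and the exceptional list can contain upwards of thirty values of $r_2$. For these cases the Bhargava--Hanke criterion set \eqref{Eq2.2} must be verified element by element for $Q_d^{(4)}$ restricted to the relevant affine residues, and the completeness of the exceptional list needs a combined local-global argument: first a $p$-adic obstruction pinning down the residue classes that can be excluded, then a direct search up to a bound determined by the minimum of $Q_d^{(4)}$ on its spinor genus to confirm that every larger $r_2$ in the allowed residues is actually attained. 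Once the tables of exceptional $r_2$ are established, both the explicit form of $\mathfrak{S}_d(1)$ and the value $g_d(1)=\max(m_d,4)$ follow immediately.
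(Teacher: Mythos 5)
Your overall skeleton matches the paper's: dispose of the principal class via Lemmata \ref{Lem2} and \ref{Lem3}, reduce the non-principal class via Lemma \ref{Lem1} to representing $r_2/k_2$ by sums of the binary forms $P_d(a,b)=N(a+b\omega)/k_2^2$ on a congruence sublattice, and split the remaining work by residue class modulo $k_2$. But your central verification step does not go through. The Bhargava--Hanke $290$-theorem (and the fifteen theorem) is an if-and-only-if criterion for a form to represent \emph{all} positive integers; it says nothing about a form that represents all integers in a residue class outside a nonempty finite exceptional set, which is exactly the situation for almost every $d$ in the table. Checking the criterion set \eqref{Eq2.2} for your $Q_d^{(4)}$ would merely confirm non-universality and would neither bound nor determine the exceptional list. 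The engine of the paper's proof, which your proposal lacks, is a specialization trick: for each residue $\delta\pmod{k_2}$ one fixes some pairs $(a_\ell,b_\ell)$ at explicit values and restricts the others to arithmetic progressions so that $\sum_\ell P_d(a_\ell,b_\ell)$ becomes literally $r(\delta)/k_2$ plus a form already known to be universal --- $2T_x+y^2+z^2$ or $2T_x+2T_y+z^2$ from Sun's list, or $x^2+y^2+z^2+w^2$ --- whence every $r_2\geq r(\delta)$ in that class is represented by at most $m_d$ norms in one stroke, with an explicit threshold; only finitely many smaller $r_2$ then need individual inspection. (For $d=15$ a regular ternary form from Jagy--Kaplansky--Schiemann is used instead, and for $d=51,187,403$ an ad hoc three-term decomposition.) Without some such device, ``universality on the relevant residue classes'' of a quaternary or eight-variable form with no local obstruction is not something the quoted tools can certify.

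Two further points. The exceptional $r_2$ are not excluded by $2$-adic or $p$-adic obstructions: the relevant forms represent these values over every $\mathbb{Z}_p$, and the exclusions in the paper are archimedean, coming from the fact that a non-integral total $r_2/k_2$ forces some $b_\ell\neq 0$, whose single contribution already exceeds $r_2/k_2$ when $r_2$ is small; moreover the exclusion must be verified against $\sum_{\ell=1}^m P_d$ for \emph{every} $m$, not just $m=4$, since membership in $\mathfrak{S}_d(1)$ allows arbitrarily many norms (this distinction is not cosmetic: the analogous analysis for class number $3$ yields $g_{907}(1)=5$). Finally, your closing formula $g_d(1)=\max(m_d,4)$ is wrong for $d=5,6,15$, where $m_d=3$ and the table gives $g_d(1)=3$; the correct conclusion, and the nontrivial content of the theorem, is $g_d(1)=m_d$.
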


\begin{proof}
For the sake of clearness and completeness, I shall prove each individual case, although the ideas of the proofs in some cases are similar.

Before to proceed to the proof, recalling Lemmata \ref{Lem2} and \ref{Lem3}, there is the smallest positive integer $m_d$ such that $L_1^{r_1}\to I_{m_d}$ for all positive integers $r_1$ and $L_2^{r_2}\to I_{m_d}$ for every positive integer $r_2\equiv0~(\mathrm{mod}~k_2)$; so, these lattices are in $\mathfrak{S}_d(1)$ and $g_d(1)\geq m_d$.
Thus, it suffices to consider only the cases where $r_2\not\equiv0~(\mathrm{mod}~k_2)$.
We will prove that if $L_2^{r_2}\to I_m$ for some positive integer $m$, then $L_2^{r_2}\to I_{m_d}$, which leads to $g_d(1)=m_d$ (and which seems unknown in the literature and nontrivial).

\vskip8pt\noindent{\bf Case 1.} $E=\mathbb{Q}\big(\sqrt{-5}\big)$ with $\mathcal{O}=\mathbb{Z}+\mathbb{Z}\omega$ for $\omega=\sqrt{-5}$.

By Lemma \ref{Lem2}, one sees $m_5=3$.
Moreover, in this case, through Table 1, we have $\mathfrak{U}_2=\mathcal{O}2+\mathcal{O}(1+\omega)$ and $h(v_2)=1/2$.
So, it follows from Lemma \ref{Lem1} that
\begin{equation*}
(1+\omega)\gamma_\ell=(1+\omega)(a_\ell+b_\ell\omega)=(a_\ell-5b_\ell)+(a_\ell+b_\ell)\omega
\end{equation*}
for each $1\leq\ell\leq m$, which leads to $2|(a_\ell+b_\ell)$.
Therefore, one has
\begin{equation*}
h\big(v_2^{r_2}\big)=\frac{r_2}{2}=\sum_{\ell=1}^mN\Big(\frac{\gamma_\ell}{2}\Big)
=\sum_{\ell=1}^m\bigg(\frac{a_\ell^2}{4}+\frac{5b_\ell^2}{4}\bigg)=:\sum_{\ell=1}^mP_5(a_\ell,b_\ell),
\end{equation*}
where $a_\ell,b_\ell$ are either both odd integers or are both even integers.

When $r_2$ is a positive odd integer, $r_2/2$ is a positive half integer; so, $r_2$ is at least $3$.
We choose $a_1=2\tilde{a}_1+1,b_1=1$ to be odd integers, and $a_\ell=2\tilde{a}_\ell,b_\ell=0$ to be even integers for $2\leq\ell\leq3$.
Then, $\sum_{\ell=1}^3P_5(a_\ell,b_\ell)$ has the form below
\begin{equation*}
\sum_{\ell=1}^3P_5(a_\ell,b_\ell)=\frac{3}{2}+\tilde{a}_1^2+\tilde{a}_1+\tilde{a}_2^2+\tilde{a}_3^2=\frac{3}{2}+2T_{\tilde{a}_1}+\tilde{a}_2^2+\tilde{a}_3^2.
\end{equation*}
Using \cite{zS15}, the above weighted mixed sum $2T_{\tilde{a}_1}+\tilde{a}_2^2+\tilde{a}_3^2$ is universal.
Thus, one sees $r_2/2$ is represented by $\sum_{\ell=1}^3P_5(a_\ell,b_\ell)$ for all odd integers $r_2\geq3$.
Therefore, we have $\mathfrak{S}_5(1)=\big\{\mathcal{O}v_1^{r_1}:r_1\geq1\big\}\bigcup\big\{\mathfrak{U}_2v_2^{r_2}:r_2\geq2\big\}$ and $g_5(1)=3$.

\vskip8pt\noindent{\bf Case 2.} $E=\mathbb{Q}\big(\sqrt{-6}\big)$ with $\mathcal{O}=\mathbb{Z}+\mathbb{Z}\omega$ for $\omega=\sqrt{-6}$.

By Lemma \ref{Lem2}, one sees $m_6=3$.
Moreover, in this case, through Table 1, we have $\mathfrak{U}_2=\mathcal{O}2+\mathcal{O}\omega$ and $h(v_2)=1/2$.
Hence, $\omega\gamma_\ell=\omega(a_\ell+b_\ell\omega)=-6b_\ell+a_\ell\omega$ follows from Lemma \ref{Lem1} for each $1\leq\ell\leq m$, yielding $2|a_\ell$.
Therefore, one has
\begin{equation*}
h\big(v_2^{r_2}\big)=\frac{r_2}{2}=\sum_{\ell=1}^mN\Big(\frac{\gamma_\ell}{2}\Big)
=\sum_{\ell=1}^m\bigg(\frac{a_\ell^2}{4}+\frac{6b_\ell^2}{4}\bigg)=:\sum_{\ell=1}^mP_6(a_\ell,b_\ell),
\end{equation*}
where $a_\ell$ is an even integer and $b_\ell$ is an arbitrary integer.

When $r_2$ is a positive odd integer, $r_2/2$ is a positive half integer; so, $r_2$ is at least $3$.
We choose $a_1=2\tilde{a}_1, b_1=1$ and $a_\ell=2\tilde{a}_\ell,b_\ell=2\tilde{b}_\ell$ for $2\leq\ell\leq3$.
Then,
\begin{equation*}
\sum_{\ell=1}^3P_6(a_\ell,b_\ell)=\frac{3}{2}+\tilde{a}_1^2+\tilde{a}_2^2+\tilde{a}_3^2+6\tilde{b}_2^2+6\tilde{b}_3^2.
\end{equation*}
Recall $\langle1,1,1,6,6\rangle$ is universal via the {\sf Conway-Schneeberger fifteen theorem}.
Thus, $r_2/2$ is represented by $\sum_{\ell=1}^3P_6(a_\ell,b_\ell)$ for all odd integers $r_2\geq3$.
Therefore, we have $\mathfrak{S}_6(1)=\big\{\mathcal{O}v_1^{r_1}:r_1\geq1\big\}\bigcup\big\{\mathfrak{U}_2v_2^{r_2}:r_2\geq2\big\}$ and $g_6(1)=3$.

\vskip8pt\noindent{\bf Case 3.} $E=\mathbb{Q}\big(\sqrt{-10}\big)$ with $\mathcal{O}=\mathbb{Z}+\mathbb{Z}\omega$ for $\omega=\sqrt{-10}$.

By Lemma \ref{Lem2}, one has $m_{10}=4$.
Moreover, in this case, through Table 1, we have $\mathfrak{U}_2=\mathcal{O}2+\mathcal{O}\omega$ and $h(v_2)=1/2$.
Hence, $\omega\gamma_\ell=\omega(a_\ell+b_\ell\omega)=-10b_\ell+a_\ell\omega$ follows from Lemma \ref{Lem1} for each $1\leq\ell\leq m$, yielding $2|a_\ell$.
Therefore, one has
\begin{equation*}
h\big(v_2^{r_2}\big)=\frac{r_2}{2}=\sum_{\ell=1}^mN\Big(\frac{\gamma_\ell}{2}\Big)
=\sum_{\ell=1}^m\bigg(\frac{a_\ell^2}{4}+\frac{10b_\ell^2}{4}\bigg)=:\sum_{\ell=1}^mP_{10}(a_\ell,b_\ell),
\end{equation*}
where $a_\ell$ is an even integer and $b_\ell$ is an arbitrary integer.

When $r_2$ is a positive odd integer, $r_2/2$ is a positive half integer; so, $r_2$ is at least $5$.
We choose $a_1=2\tilde{a}_1,b_1=1$ and $a_\ell=2\tilde{a}_\ell,b_\ell=0$ for $2\leq\ell\leq4$.
Then,
\begin{equation*}
\sum_{\ell=1}^4P_{10}(a_\ell,b_\ell)=\frac{5}{2}+\tilde{a}_1^2+\tilde{a}_2^2+\tilde{a}_3^2+\tilde{a}_4^2.
\end{equation*}
Thus, $r_2/2$ is represented by $\sum_{\ell=1}^4P_{10}(a_\ell,b_\ell)$ for all odd integers $r_2\geq5$.
So, we have $\mathfrak{S}_{10}(1)=\big\{\mathcal{O}v_1^{r_1}:r_1\geq1\big\}\bigcup\big\{\mathfrak{U}_2v_2^{r_2}:r_2\geq2\,\,\text{and}\,\,r_2\neq3\big\}$ and $g_{10}(1)=4$.

\vskip8pt\noindent{\bf Case 4.} $E=\mathbb{Q}\big(\sqrt{-13}\big)$ with $\mathcal{O}=\mathbb{Z}+\mathbb{Z}\omega$ for $\omega=\sqrt{-13}$.

By Lemma \ref{Lem2}, one has $m_{13}=4$.
Moreover, in this case, through Table 1, we have $\mathfrak{U}_2=\mathcal{O}2+\mathcal{O}(1+\omega)$ and $h(v_2)=1/2$.
So, it follows from Lemma \ref{Lem1} that
\begin{equation*}
(1+\omega)\gamma_\ell=(1+\omega)(a_\ell+b_\ell\omega)=(a_\ell-13b_\ell)+(a_\ell+b_\ell)\omega
\end{equation*}
for each $1\leq\ell\leq m$, which leads to $2|(a_\ell+b_\ell)$.
Therefore, one has
\begin{equation*}
h\big(v_2^{r_2}\big)=\frac{r_2}{2}=\sum_{\ell=1}^mN\Big(\frac{\gamma_\ell}{2}\Big)
=\sum_{\ell=1}^m\bigg(\frac{a_\ell^2}{4}+\frac{13b_\ell^2}{4}\bigg)=:\sum_{\ell=1}^mP_{13}(a_\ell,b_\ell),
\end{equation*}
where $a_\ell,b_\ell$ are either both odd integers or are both even integers.

When $r_2$ is a positive odd integer, $r_2/2$ is a positive half integer; so, $r_2$ is at least $7$.
We choose $a_1=2\tilde{a}_1+1,b_1=1$ to be odd integers, and $a_\ell=2\tilde{a}_\ell,b_\ell=0$ to be even integers for $2\leq\ell\leq3$.
Then, $\sum_{\ell=1}^3P_{13}(a_\ell,b_\ell)$ has the form below
\begin{equation*}
\sum_{\ell=1}^3P_{13}(a_\ell,b_\ell)=\frac{7}{2}+\tilde{a}_1^2+\tilde{a}_1+\tilde{a}_2^2+\tilde{a}_3^2=\frac{7}{2}+2T_{\tilde{a}_1}+\tilde{a}_2^2+\tilde{a}_3^2.
\end{equation*}
Using \cite{zS15}, the above weighted mixed sum $2T_{\tilde{a}_1}+\tilde{a}_2^2+\tilde{a}_3^2$ is universal.
Thus, one sees $r_2/2$ is represented by $\sum_{\ell=1}^3P_{13}(a_\ell,b_\ell)$ for all odd integers $r_2\geq7$.
Therefore, we have $\mathfrak{S}_{13}(1)=\big\{\mathcal{O}v_1^{r_1}:r_1\geq1\big\}\bigcup\big\{\mathfrak{U}_2v_2^{r_2}:r_2\geq2\,\,\text{and}\,\,r_2\neq3,5\big\}$ and $g_{13}(1)=4$.

\vskip8pt\noindent{\bf Case 5.} $E=\mathbb{Q}\big(\sqrt{-15}\big)$ with $\mathcal{O}=\mathbb{Z}+\mathbb{Z}\omega$ for $\omega=\frac{1+\sqrt{-15}}{2}$.

By Lemma \ref{Lem2}, one has $m_{15}=3$.
Moreover, in this case, through Table 1, we have $\mathfrak{U}_2=\mathcal{O}2+\mathcal{O}(1+\omega)$ and $h(v_2)=1/2$.
So, it follows from Lemma \ref{Lem1} that
\begin{equation*}
(1+\omega)\gamma_\ell=(1+\omega)(a_\ell+b_\ell\omega)=(a_\ell-4b_\ell)+(a_\ell+2b_\ell)\omega
\end{equation*}
for each $1\leq\ell\leq m$, which leads to $2|a_\ell$.
Therefore, one has
\begin{equation*}
h\big(v_2^{r_2}\big)=\frac{r_2}{2}=\sum_{\ell=1}^mN\Big(\frac{\gamma_\ell}{2}\Big)
=\sum_{\ell=1}^m\bigg(\frac{a_\ell^2}{4}+\frac{a_\ell b_\ell}{4}+\frac{4b_\ell^2}{4}\bigg)=:\sum_{\ell=1}^mP_{15}(a_\ell,b_\ell),
\end{equation*}
where $a_\ell$ is an even integer and $b_\ell$ is an arbitrary integer.

When $r_2$ is a positive odd integer, $r_2/2$ is a positive half integer; so, $r_2$ is at least $3$.
We choose $a_1=2,b_1=-1$ and $a_\ell=2\tilde{a}_\ell,b_\ell=2\tilde{b}_\ell$ for $2\leq\ell\leq3$.
Then,
\begin{equation*}
\sum_{\ell=1}^3P_{15}(a_\ell,b_\ell)=\frac{3}{2}+\tilde{a}_2^2+\tilde{a}_2\tilde{b}_2+4\tilde{b}_2^2+\tilde{a}_3^2+\tilde{a}_3\tilde{b}_3+4\tilde{b}_3^2.
\end{equation*}
From Jagy, Kaplansky and Schiemann \cite[Table 1]{JKS}, the quadratic form $f\big(\tilde{a}_2,\tilde{b}_2,\tilde{a}_3\big)=\tilde{a}_2^2+\tilde{a}_2\tilde{b}_2+4\tilde{b}_2^2+\tilde{a}_3^2$ is regular and satisfies the local-global principle; in fact, locally, we have $f_p\big(\tilde{a}_2,\tilde{b}_2,\tilde{a}_3\big)\cong\langle1,1,15\rangle$ if $p\geq3$.
It is clear that $f_p$ is universal when $p\geq7$; when $p=5$, we get $1,2,5,15\to f_5$; when $p=2$, we get $1,11(\equiv3~(\mathrm{mod}~8)),5,7,2,6,$ $10,14\to f_2$; when $p=3$, $f_3$ represents all the integers in $\mathbb{Z}_3$ except for the square class containing $3$.
Thus, $f\big(\tilde{a}_2,\tilde{b}_2,\tilde{a}_3\big)$ represents every positive integer except for that of the form $3^{2a+1}(3b+1)$ for nonnegative integers $a,b$; as $5/2=P_{15}(2,1)$, for numbers of the form $3/2+3^{2a+1}(3b+1)$, we observe $3/2+3^{2a+1}(3b+1)=5/2+\big(3^{2a+1}(3b+1)-1\big)$, with $3^{2a+1}(3b+1)-1\equiv2~(\mathrm{mod}~3)$ being represented by $f\big(\tilde{a}_2,\tilde{b}_2,\tilde{a}_3\big)$.
Consequently, $r_2/2$ is represented by $\sum_{\ell=1}^3P_{15}(a_\ell,b_\ell)$ for all odd integers $r_2\geq3$.
Therefore, we have $\mathfrak{S}_{15}(1)=\big\{\mathcal{O}v_1^{r_1}:r_1\geq1\big\}\bigcup\big\{\mathfrak{U}_2v_2^{r_2}:r_2\geq2\big\}$ and $g_{15}(1)=3$.

\vskip8pt\noindent{\bf Case 6.} $E=\mathbb{Q}\big(\sqrt{-22}\big)$ with $\mathcal{O}=\mathbb{Z}+\mathbb{Z}\omega$ for $\omega=\sqrt{-22}$.

By Lemma \ref{Lem2}, one has $m_{22}=4$.
Moreover, in this case, through Table 1, we have $\mathfrak{U}_2=\mathcal{O}2+\mathcal{O}\omega$ and $h(v_2)=1/2$.
Thus, $\omega\gamma_\ell=\omega(a_\ell+b_\ell\omega)=-22b_\ell+a_\ell\omega$ follows from Lemma \ref{Lem1} for each $1\leq\ell\leq m$, yielding $2|a_\ell$.
Therefore, one has
\begin{equation*}
h\big(v_2^{r_2}\big)=\frac{r_2}{2}=\sum_{\ell=1}^mN\Big(\frac{\gamma_\ell}{2}\Big)
=\sum_{\ell=1}^m\bigg(\frac{a_\ell^2}{4}+\frac{22b_\ell^2}{4}\bigg)=:\sum_{\ell=1}^mP_{22}(a_\ell,b_\ell),
\end{equation*}
where $a_\ell$ is an even integer and $b_\ell$ is an arbitrary integer.

When $r_2$ is a positive odd integer, $r_2/2$ is a positive half integer; thus, $r_2$ is at least $11$.
We take $a_1=2\tilde{a}_1,b_1=1$ and $a_\ell=2\tilde{a}_\ell,b_\ell=0$ for $2\leq\ell\leq4$.
Then,
\begin{equation*}
\sum_{\ell=1}^4P_{22}(a_\ell,b_\ell)=\frac{11}{2}+\tilde{a}_1^2+\tilde{a}_2^2+\tilde{a}_3^2+\tilde{a}_4^2.
\end{equation*}
Thus, $r_2/2$ is represented by $\sum_{\ell=1}^4P_{22}(a_\ell,b_\ell)$ for all odd integers $r_2\geq11$.
So, we have $\mathfrak{S}_{22}(1)=\big\{\mathcal{O}v_1^{r_1}:r_1\geq1\big\}\bigcup\big\{\mathfrak{U}_2v_2^{r_2}:r_2\geq2\,\,\text{and}\,\,r_2\neq3,5,7,9\big\}$ and $g_{22}(1)=4$.

\vskip8pt\noindent{\bf Case 7.} $E=\mathbb{Q}\big(\sqrt{-35}\big)$ with $\mathcal{O}=\mathbb{Z}+\mathbb{Z}\omega$ for $\omega=\frac{1+\sqrt{-35}}{2}$.

By Lemma \ref{Lem2}, one has $m_{35}=4$.
Moreover, in this case, through Table 1, we have $\mathfrak{U}_2=\mathcal{O}5+\mathcal{O}(2+\omega)$ and $h(v_2)=1/5$.
So, it follows from Lemma \ref{Lem1} that
\begin{equation*}
(2+\omega)\gamma_\ell=(2+\omega)(a_\ell+b_\ell\omega)=(2a_\ell-9b_\ell)+(a_\ell+3b_\ell)\omega
\end{equation*}
for each $1\leq\ell\leq m$, which yields $5|(a_\ell+3b_\ell)$ and further $5|(2a_\ell+b_\ell)$.
So,
\begin{equation*}
h\big(v_2^{r_2}\big)=\frac{r_2}{5}=\sum_{\ell=1}^mN\Big(\frac{\gamma_\ell}{5}\Big)
=\sum_{\ell=1}^m\bigg(\frac{a_\ell^2}{25}+\frac{a_\ell b_\ell}{25}+\frac{9b_\ell^2}{25}\bigg)=:\sum_{\ell=1}^mP_{35}(a_\ell,b_\ell),
\end{equation*}
where $a_\ell$ and $b_\ell$ are integers satisfying $5|(2a_\ell+b_\ell)$.

For $1\leq\delta\leq4$, let $r(\delta)$ be the smallest positive integer such that $r(\delta)\equiv\delta~(\mathrm{mod}~5)$ and $r(\delta)/5$ is represented by $\sum_{\ell=1}^mP_{35}(a_\ell,b_\ell)$ for some positive integer $m$.
Below, we show that by choosing $a_\ell,b_\ell$ properly, $\sum_{\ell=1}^3P_{35}(a_\ell,b_\ell)$, or $\sum_{\ell=1}^4P_{35}(a_\ell,b_\ell)$, is equal to $r(\delta)/5$, plus an universal form.
Thus, $r_2/5$, with $r_2\geq r(\delta)$ and $r_2\equiv\delta~(\mathrm{mod}~5)$, can be represented by $\sum_{\ell=1}^3P_{35}(a_\ell,b_\ell)$ or $\sum_{\ell=1}^4P_{35}(a_\ell,b_\ell)$.
\begin{enumerate}
\item $r(1)=6$: choose $a_\ell=5\tilde{a}_\ell+2,b_\ell=1$ for $1\leq\ell\leq2$ and $a_3=5\tilde{a}_3,b_3=0$ to observe
\begin{equation*}
\sum_{\ell=1}^3P_{35}(a_\ell,b_\ell)=\frac{6}{5}+\tilde{a}_1^2+\tilde{a}_1+\tilde{a}_2^2+\tilde{a}_2+\tilde{a}_3^2=\frac{6}{5}+2T_{\tilde{a}_1}+2T_{\tilde{a}_2}+\tilde{a}_3^2.
\end{equation*}
\item $r(2)=7$: choose $a_1=5\tilde{a}_1-1,b_1=2$ and $a_\ell=5\tilde{a}_\ell,b_\ell=0$ for $2\leq\ell\leq4$ to observe
\begin{equation*}
\sum_{\ell=1}^4P_{35}(a_\ell,b_\ell)=\frac{7}{5}+\tilde{a}_1^2+\tilde{a}_2^2+\tilde{a}_3^2+\tilde{a}_4^2.
\end{equation*}
\item $r(3)=3$: choose $a_1=5\tilde{a}_1+2,b_1=1$ and $a_\ell=5\tilde{a}_\ell,b_\ell=0$ for $2\leq\ell\leq3$ to observe
\begin{equation*}
\sum_{\ell=1}^3P_{35}(a_\ell,b_\ell)=\frac{3}{5}+\tilde{a}_1^2+\tilde{a}_1+\tilde{a}_2^2+\tilde{a}_3^2=\frac{3}{5}+2T_{\tilde{a}_1}+\tilde{a}_2^2+\tilde{a}_3^2.
\end{equation*}
\item $r(4)=9$: choose $a_1=2,b_1=1$, $a_\ell=5\tilde{a}_\ell+2,b_\ell=1$ for $2\leq\ell\leq3$ and $a_4=5\tilde{a}_4,b_4=0$ to observe
\begin{equation*}
\sum_{\ell=1}^4P_{35}(a_\ell,b_\ell)=\frac{9}{5}+\tilde{a}_2^2+\tilde{a}_2+\tilde{a}_3^2+\tilde{a}_3+\tilde{a}_4^2=\frac{9}{5}+2T_{\tilde{a}_2}+2T_{\tilde{a}_3}+\tilde{a}_4^2.
\end{equation*}
\end{enumerate}
Recall that $2T_x+y^2+z^2$ and $2T_x+2T_y+z^2$ are universal by \cite{zS15}.
Therefore, we have $\mathfrak{S}_{35}(1)=\big\{\mathcal{O}v_1^{r_1}:r_1\geq1\big\}\bigcup\big\{\mathfrak{U}_2v_2^{r_2}:r_2\geq3\,\,\text{and}\,\,r_2\neq4\big\}$ and $g_{35}(1)=4$.

\vskip8pt\noindent{\bf Case 8.} $E=\mathbb{Q}\big(\sqrt{-37}\big)$ with $\mathcal{O}=\mathbb{Z}+\mathbb{Z}\omega$ for $\omega=\sqrt{-37}$.

By Lemma \ref{Lem2}, one has $m_{37}=4$.
Moreover, in this case, through Table 1, we have $\mathfrak{U}_2=\mathcal{O}2+\mathcal{O}(1+\omega)$ and $h(v_2)=1/2$.
So, it follows from Lemma \ref{Lem1} that
\begin{equation*}
(1+\omega)\gamma_\ell=(1+\omega)(a_\ell+b_\ell\omega)=(a_\ell-37b_\ell)+(a_\ell+b_\ell)\omega
\end{equation*}
for each $1\leq\ell\leq m$, which leads to $2|(a_\ell+b_\ell)$.
Therefore, one has
\begin{equation*}
h\big(v_2^{r_2}\big)=\frac{r_2}{2}=\sum_{\ell=1}^mN\Big(\frac{\gamma_\ell}{2}\Big)
=\sum_{\ell=1}^m\bigg(\frac{a_\ell^2}{4}+\frac{37b_\ell^2}{4}\bigg)=:\sum_{\ell=1}^mP_{37}(a_\ell,b_\ell),
\end{equation*}
where $a_\ell,b_\ell$ are either both odd integers or are both even integers.

When $r_2$ is a positive odd integer, $r_2/2$ is a positive half integer; thus, $r_2$ is at least $19$.
We take $a_1=2\tilde{a}_1+1,b_1=1$ to be odd integers, and $a_\ell=2\tilde{a}_\ell,b_\ell=0$ to be even integers for $2\leq\ell\leq3$.
Then, $\sum_{\ell=1}^3P_{37}(a_\ell,b_\ell)$ has the form below
\begin{equation*}
\sum_{\ell=1}^3P_{37}(a_\ell,b_\ell)=\frac{19}{2}+\tilde{a}_1^2+\tilde{a}_1+\tilde{a}_2^2+\tilde{a}_3^2=\frac{19}{2}+2T_{\tilde{a}_1}+\tilde{a}_2^2+\tilde{a}_3^2.
\end{equation*}
Since $2T_{\tilde{a}_1}+\tilde{a}_2^2+\tilde{a}_3^2$ is universal by \cite{zS15}, $r_2/2$ is represented by $\sum_{\ell=1}^3P_{37}(a_\ell,b_\ell)$ for all odd integers $r_2\geq19$.
Thus, we have $\mathfrak{S}_{37}(1)=\big\{\mathcal{O}v_1^{r_1}:r_1\geq1\big\}\bigcup\big\{\mathfrak{U}_2v_2^{r_2}:r_2\geq2$
$\text{and}\,\,r_2\neq3,5,7,9,11,13,15,17\big\}$ and $g_{37}(1)=4$.

\vskip8pt\noindent{\bf Case 9.} $E=\mathbb{Q}\big(\sqrt{-51}\big)$ with $\mathcal{O}=\mathbb{Z}+\mathbb{Z}\omega$ for $\omega=\frac{1+\sqrt{-51}}{2}$.

By Lemma \ref{Lem2}, one has $m_{51}=4$.
Moreover, in this case, through Table 1, we have $\mathfrak{U}_2=\mathcal{O}5+\mathcal{O}(1+\omega)$ and $h(v_2)=1/5$.
So, it follows from Lemma \ref{Lem1} that
\begin{equation*}
(1+\omega)\gamma_\ell=(1+\omega)(a_\ell+b_\ell\omega)=(a_\ell-13b_\ell)+(a_\ell+2b_\ell)\omega
\end{equation*}
for each $1\leq\ell\leq m$, which leads to $5|(a_\ell+2b_\ell)$.
Therefore, one has
\begin{equation*}
h\big(v_2^{r_2}\big)=\frac{r_2}{5}=\sum_{\ell=1}^mN\Big(\frac{\gamma_\ell}{5}\Big)
=\sum_{\ell=1}^m\bigg(\frac{a_\ell^2}{25}+\frac{a_\ell b_\ell}{25}+\frac{13b_\ell^2}{25}\bigg)=:\sum_{\ell=1}^mP_{51}(a_\ell,b_\ell),
\end{equation*}
where $a_\ell$ and $b_\ell$ are integers satisfying $5|(a_\ell+2b_\ell)$.

Notice $P_{51}(5\tilde{a}_2,5)+P_{51}(5\tilde{a}_3,0)+P_{51}(5\tilde{a}_4,0)=13+2T_{\tilde{a}_2}+\tilde{a}_3^2+\tilde{a}_4^2$ represents all positive integers $r\geq13$, while $P_{51}(5\tilde{a}_2,0)+P_{51}(5\tilde{a}_3,0)+P_{51}(5\tilde{a}_4,0)=\tilde{a}_2^2+\tilde{a}_3^2+\tilde{a}_4^2$ represents all positive integers $r\neq4^a(8b+7)$ for nonnegative integers $a,b$; moreover, notice $7=P_{51}(2,-1)+P_{51}(4,-2)+P_{51}(10,0)$.
Combining these cases together, one sees that $\sum_{\ell=2}^4P_{51}(a_\ell,b_\ell)$ represents all positive integers.

For $1\leq\delta\leq4$, let $r(\delta)$ be the smallest positive integer such that $r(\delta)\equiv\delta~(\mathrm{mod}~5)$ and $r(\delta)/5$ is represented by $P_{51}(a_1,b_1)$.
Then, as analyzed above, $r_2/5$ is represented by $\sum_{\ell=1}^4P_{51}(a_\ell,b_\ell)$ for every $r_2\geq r(\delta)$ and $r_2\equiv\delta~(\mathrm{mod}~5)$.
It is enough to check the representation of $r_2/5$, with $r_2<r(\delta)$ and $r_2\equiv\delta~(\mathrm{mod}~5)$, by $\sum_{\ell=1}^mP_{51}(a_\ell,b_\ell)$, and further by $\sum_{\ell=1}^4P_{51}(a_\ell,b_\ell)$, below
{\small\begin{center}
\setlength{\arrayrulewidth}{0.2mm}
\setlength{\tabcolsep}{2pt}
\renewcommand{\arraystretch}{0.81}
\begin{tabular}{|m{1.37cm}|m{5.5cm}|m{1.67cm}|}
\hline\vskip2pt $r(\delta)$ & \vskip2pt $r_2<r(\delta)$ and $r_2\equiv\delta~(\mathrm{mod}~5)$ & \vskip2pt $r_2=r(\delta)$ \\[0.5ex]

\hline\vskip2pt $r(1)=11$   & \vskip2pt \,\,\,$6=2P_{51}(2,-1)$                                & \vskip2pt $P_{51}(1,2)$   \\[0.5ex]
                            &           Others cannot be represented.                          &                           \\[0.5ex]

\hline\vskip2pt $r(2)=12$   & \vskip2pt None can be represented.                               & \vskip2pt $P_{51}(4,-2)$  \\[0.5ex]

\hline\vskip2pt $r(3)=3$    & \vskip2pt None can be represented.                               & \vskip2pt $P_{51}(2,-1)$  \\[0.5ex]

\hline\vskip2pt $r(4)=29$   & \vskip2pt \,\,\,$9=3P_{51}(2,-1)$                                & \vskip2pt $P_{51}(12,-1)$ \\[0.5ex]
                            &           $14=P_{51}(1,2)+P_{51}(2,-1)$                          &                           \\[0.5ex]
                            &           $19=P_{51}(1,2)+P_{51}(2,-1)+P_{51}(5,0)$              &                           \\[0.5ex]
                            &           $24=P_{51}(1,2)+P_{51}(2,-1)+2P_{51}(5,0)$             &                           \\[0.5ex]
                            &           Others cannot be represented.                          &                           \\[0.5ex]
\hline
\end{tabular}
\end{center}}
As the first three numbers that can be represented by $P_{51}(a_\ell,b_\ell)$ are $3/5,1,11/5$, it is easy to note $1/5,2/5,4/5,7/5$ cannot be represented by $\sum_{\ell=1}^mP_{51}(a_\ell,b_\ell)$ for any positive integer $m$.
So, we have $\mathfrak{S}_{51}(1)=\big\{\mathcal{O}v_1^{r_1}:r_1\geq1\big\}\bigcup\big\{\mathfrak{U}_2v_2^{r_2}:r_2\geq3\,\,\text{and}\,\,r_2\neq4,7\big\}$ and $g_{51}(1)=4$.

\vskip8pt\noindent{\bf Case 10.} $E=\mathbb{Q}\big(\sqrt{-58}\big)$ with $\mathcal{O}=\mathbb{Z}+\mathbb{Z}\omega$ for $\omega=\sqrt{-58}$.

By Lemma \ref{Lem2}, one has $m_{58}=4$.
Moreover, in this case, through Table 1, we have $\mathfrak{U}_2=\mathcal{O}2+\mathcal{O}\omega$ and $h(v_2)=1/2$.
Thus, $\omega\gamma_\ell=\omega(a_\ell+b_\ell\omega)=-58b_\ell+a_\ell\omega$ follows from Lemma \ref{Lem1} for each $1\leq\ell\leq m$, yielding $2|a_\ell$.
Therefore, one has
\begin{equation*}
h\big(v_2^{r_2}\big)=\frac{r_2}{2}=\sum_{\ell=1}^mN\Big(\frac{\gamma_\ell}{2}\Big)
=\sum_{\ell=1}^m\bigg(\frac{a_\ell^2}{4}+\frac{58b_\ell^2}{4}\bigg)=:\sum_{\ell=1}^mP_{58}(a_\ell,b_\ell),
\end{equation*}
where $a_\ell$ is an even integer and $b_\ell$ is an arbitrary integer.

When $r_2$ is a positive odd integer, $r_2/2$ is a positive half integer; thus, $r_2$ is at least $29$.
We take $a_1=2\tilde{a}_1, b_1=1$ and $a_\ell=2\tilde{a}_\ell,b_\ell=0$ for $2\leq\ell\leq4$.
Then,
\begin{equation*}
\sum_{\ell=1}^4P_{58}(a_\ell,b_\ell)=\frac{29}{2}+\tilde{a}_1^2+\tilde{a}_2^2+\tilde{a}_3^2+\tilde{a}_4^2.
\end{equation*}
So, $r_2/2$ is represented by $\sum_{\ell=1}^4P_{58}(a_\ell,b_\ell)$ for all odd integers $r_2\geq29$.
Therefore, we have $\mathfrak{S}_{58}(1)=\big\{\mathcal{O}v_1^{r_1}:r_1\geq1\big\}\bigcup\big\{\mathfrak{U}_2v_2^{r_2}:r_2\geq2\,\,\text{and}\,\,r_2\neq3,5,7,9,11,13,15,17,$ $19,21,23,25,27\big\}$ and $g_{58}(1)=4$.

\vskip8pt\noindent{\bf Case 11.} $E=\mathbb{Q}\big(\sqrt{-91}\big)$ with $\mathcal{O}=\mathbb{Z}+\mathbb{Z}\omega$ for $\omega=\frac{1+\sqrt{-91}}{2}$.

By Lemma \ref{Lem2}, one has $m_{91}=4$.
Moreover, in this case, through Table 1, we have $\mathfrak{U}_2=\mathcal{O}7+\mathcal{O}(3+\omega)$ and $h(v_2)=1/7$.
So, it follows from Lemma \ref{Lem1} that
\begin{equation*}
(3+\omega)\gamma_\ell=(3+\omega)(a_\ell+b_\ell\omega)=(3a_\ell-23b_\ell)+(a_\ell+4b_\ell)\omega
\end{equation*}
for each $1\leq\ell\leq m$, which yields $7|(a_\ell+4b_\ell)$ and further $7|(2a_\ell+b_\ell)$.
So,
\begin{equation*}
h\big(v_2^{r_2}\big)=\frac{r_2}{7}=\sum_{\ell=1}^mN\Big(\frac{\gamma_\ell}{7}\Big)
=\sum_{\ell=1}^m\bigg(\frac{a_\ell^2}{49}+\frac{a_\ell b_\ell}{49}+\frac{23b_\ell^2}{49}\bigg)=:\sum_{\ell=1}^mP_{91}(a_\ell,b_\ell),
\end{equation*}
where $a_\ell$ and $b_\ell$ are integers satisfying $7|(2a_\ell+b_\ell)$.

For $1\leq\delta\leq 6$, let $r(\delta)$ be the smallest positive integer such that $r(\delta)\equiv\delta~(\mathrm{mod}~7)$ and $r(\delta)/7$ is represented by $\sum_{\ell=1}^mP_{91}(a_\ell,b_\ell)$ for some positive integer $m$.
Below, we show that by choosing $a_\ell,b_\ell$ properly, $\sum_{\ell=1}^3P_{91}(a_\ell,b_\ell)$, or $\sum_{\ell=1}^4P_{91}(a_\ell,b_\ell)$, is equal to $r(\delta)/7$, plus an universal form.
Thus, $r_2/7$, with $r_2\geq r(\delta)$ and $r_2\equiv\delta~(\mathrm{mod}~7)$, can be represented by $\sum_{\ell=1}^3P_{91}(a_\ell,b_\ell)$ or $\sum_{\ell=1}^4P_{91}(a_\ell,b_\ell)$.
\begin{enumerate}
\item $r(1)=15$: choose $a_1=3,b_1=1$, $a_\ell=7\tilde{a}_\ell+3,b_\ell=1$ for $2\leq\ell\leq3$ and $a_4=7\tilde{a}_4,b_4=0$ to observe
\begin{equation*}
\sum_{\ell=1}^4P_{91}(a_\ell,b_\ell)=\frac{15}{7}+\tilde{a}_2^2+\tilde{a}_2+\tilde{a}_3^2+\tilde{a}_3+\tilde{a}_4^2=\frac{15}{7}+2T_{\tilde{a}_2}+2T_{\tilde{a}_3}+\tilde{a}_4^2.
\end{equation*}
\item $r(2)=23$: choose $a_1=-1,b_1=2$, $a_\ell=7\tilde{a}_\ell+3,b_\ell=1$ for $2\leq\ell\leq3$ and $a_4=7\tilde{a}_4,b_4=0$ to observe
\begin{equation*}
\sum_{\ell=1}^4P_{91}(a_\ell,b_\ell)=\frac{23}{7}+\tilde{a}_2^2+\tilde{a}_2+\tilde{a}_3^2+\tilde{a}_3+\tilde{a}_4^2=\frac{23}{7}+2T_{\tilde{a}_2}+2T_{\tilde{a}_3}+\tilde{a}_4^2.
\end{equation*}
\item $r(3)=10$: choose $a_\ell=7\tilde{a}_\ell+3,b_\ell=1$ for $1\leq\ell\leq2$ and $a_3=7\tilde{a}_3,b_3=0$ to observe
\begin{equation*}
\sum_{\ell=1}^3P_{91}(a_\ell,b_\ell)=\frac{10}{7}+\tilde{a}_1^2+\tilde{a}_1+\tilde{a}_2^2+\tilde{a}_2+\tilde{a}_3^2=\frac{10}{7}+2T_{\tilde{a}_1}+2T_{\tilde{a}_2}+\tilde{a}_3^2.
\end{equation*}
\item $r(4)=18$: choose $a_1=-1,b_1=2$, $a_2=7\tilde{a}_2+3,b_2=1$ and $a_\ell=7\tilde{a}_\ell,b_\ell=0$ for $3\leq\ell\leq4$ to observe
\begin{equation*}
\sum_{\ell=1}^4P_{91}(a_\ell,b_\ell)=\frac{18}{7}+\tilde{a}_2^2+\tilde{a}_2+\tilde{a}_3^2+\tilde{a}_4^2=\frac{18}{7}+2T_{\tilde{a}_2}+\tilde{a}_3^2+\tilde{a}_4^2.
\end{equation*}
\item $r(5)=5$: choose $a_1=7\tilde{a}_1+3,b_1=1$ and $a_\ell=7\tilde{a}_\ell,b_\ell=0$ for $2\leq\ell\leq3$ to observe
\begin{equation*}
\sum_{\ell=1}^3P_{91}(a_\ell,b_\ell)=\frac{5}{7}+\tilde{a}_1^2+\tilde{a}_1+\tilde{a}_2^2+\tilde{a}_3^2=\frac{5}{7}+2T_{\tilde{a}_1}+\tilde{a}_2^2+\tilde{a}_3^2.
\end{equation*}
\item $r(6)=13$: choose $a_1=7\tilde{a}_1-1,b_1=2$ and $a_\ell=7\tilde{a}_\ell,b_\ell=0$ for $2\leq\ell\leq4$ to observe
\begin{equation*}
\sum_{\ell=1}^4P_{91}(a_\ell,b_\ell)=\frac{13}{7}+\tilde{a}_1^2+\tilde{a}_2^2+\tilde{a}_3^2+\tilde{a}_4^2.
\end{equation*}
\end{enumerate}
Recall $2T_x+y^2+z^2$ and $2T_x+2T_y+z^2$ are universal by \cite{zS15}.
So, we have $\mathfrak{S}_{91}(1)=\big\{\mathcal{O}v_1^{r_1}:r_1\geq1\big\}\bigcup\big\{\mathfrak{U}_2v_2^{r_2}:r_2\geq5\,\,\text{and}\,\,r_2\neq6,8,9,11,16\big\}$ and $g_{91}(1)=4$.

\vskip8pt\noindent{\bf Case 12.} $E=\mathbb{Q}\big(\sqrt{-115}\big)$ with $\mathcal{O}=\mathbb{Z}+\mathbb{Z}\omega$ for $\omega=\frac{1+\sqrt{-115}}{2}$.

By Lemma \ref{Lem2}, one has $m_{115}=4$.
Moreover, in this case, through Table 1, we have $\mathfrak{U}_2=\mathcal{O}5+\mathcal{O}(-3+\omega)$ and $h(v_2)=1/5$.
So, it follows via Lemma \ref{Lem1} that
\begin{equation*}
(-3+\omega)\gamma_\ell=(-3+\omega)(a_\ell+b_\ell\omega)=(-3a_\ell-29b_\ell)+(a_\ell-2b_\ell)\omega
\end{equation*}
for each $1\leq\ell\leq m$, which yields $5|(a_\ell-2b_\ell)$ and further $5|(2a_\ell+b_\ell)$.
So,
\begin{equation*}
h\big(v_2^{r_2}\big)=\frac{r_2}{5}=\sum_{\ell=1}^mN\Big(\frac{\gamma_\ell}{5}\Big)
=\sum_{\ell=1}^m\bigg(\frac{a_\ell^2}{25}+\frac{a_\ell b_\ell}{25}+\frac{29b_\ell^2}{25}\bigg)=:\sum_{\ell=1}^mP_{115}(a_\ell,b_\ell),
\end{equation*}
where $a_\ell$ and $b_\ell$ are integers satisfying $5|(2a_\ell+b_\ell)$.

For $1\leq\delta\leq 4$, let $r(\delta)$ be the smallest positive integer such that $r(\delta)\equiv\delta~(\mathrm{mod}~5)$ and $r(\delta)/5$ is represented by $\sum_{\ell=1}^mP_{115}(a_\ell,b_\ell)$ for some positive integer $m$.
Below, we show that by picking $a_\ell,b_\ell$ properly, $\sum_{\ell=1}^3P_{115}(a_\ell,b_\ell)$, or $\sum_{\ell=1}^4P_{115}(a_\ell,b_\ell)$, is equal to $r(\delta)/5$, plus an universal form.
Thus, $r_2/5$, with $r_2\geq r(\delta)$ and $r_2\equiv\delta~(\mathrm{mod}~5)$, can be represented by $\sum_{\ell=1}^3P_{115}(a_\ell,b_\ell)$ or $\sum_{\ell=1}^4P_{115}(a_\ell,b_\ell)$.
\begin{enumerate}
\item $r(1)=21$: choose $a_1=2,b_1=1$, $a_\ell=5\tilde{a}_\ell+2,b_\ell=1$ for $2\leq\ell\leq3$ and $a_4=5\tilde{a}_4,b_4=0$ to observe
\begin{equation*}
\sum_{\ell=1}^4P_{115}(a_\ell,b_\ell)=\frac{21}{5}+\tilde{a}_2^2+\tilde{a}_2+\tilde{a}_3^2+\tilde{a}_3+\tilde{a}_4^2=\frac{21}{5}+2T_{\tilde{a}_2}+2T_{\tilde{a}_3}+\tilde{a}_4^2.
\end{equation*}
\item $r(2)=7$: choose $a_1=5\tilde{a}_1+2,b_1=1$ and $a_\ell=5\tilde{a}_\ell,b_\ell=0$ for $2\leq\ell\leq3$ to observe
\begin{equation*}
\sum_{\ell=1}^3P_{115}(a_\ell,b_\ell)=\frac{7}{5}+\tilde{a}_1^2+\tilde{a}_1+\tilde{a}_2^2+\tilde{a}_3^2=\frac{7}{5}+2T_{\tilde{a}_1}+\tilde{a}_2^2+\tilde{a}_3^2.
\end{equation*}
\item $r(3)=23$: choose $a_1=5\tilde{a}_1-1,b_1=2$ and $a_\ell=5\tilde{a}_\ell,b_\ell=0$ for $2\leq\ell\leq4$ to observe
\begin{equation*}
\sum_{\ell=1}^4P_{115}(a_\ell,b_\ell)=\frac{23}{5}+\tilde{a}_1^2+\tilde{a}_2^2+\tilde{a}_3^2+\tilde{a}_4^2.
\end{equation*}
\item $r(4)=14$: choose $a_\ell=5\tilde{a}_\ell+2,b_\ell=1$ for $1\leq\ell\leq2$ and $a_3=5\tilde{a}_3,b_3=0$ to observe
\begin{equation*}
\sum_{\ell=1}^3P_{115}(a_\ell,b_\ell)=\frac{14}{5}+\tilde{a}_1^2+\tilde{a}_1+\tilde{a}_2^2+\tilde{a}_2+\tilde{a}_3^2=\frac{14}{5}+2T_{\tilde{a}_1}+2T_{\tilde{a}_2}+\tilde{a}_3^2.
\end{equation*}
\end{enumerate}
As $2T_x+y^2+z^2$ and $2T_x+2T_y+z^2$ are universal by \cite{zS15}, we have $\mathfrak{S}_{115}(1)=\big\{\mathcal{O}v_1^{r_1}:r_1\geq1\big\}\bigcup\big\{\mathfrak{U}_2v_2^{r_2}:r_2\geq5\,\,\text{and}\,\,r_2\neq6,8,9,11,13,16,18\big\}$ and $g_{115}(1)=4$.

\vskip8pt\noindent{\bf Case 13.} $E=\mathbb{Q}\big(\sqrt{-123}\big)$ with $\mathcal{O}=\mathbb{Z}+\mathbb{Z}\omega$ for $\omega=\frac{1+\sqrt{-123}}{2}$.

By Lemma \ref{Lem2}, one has $m_{123}=4$.
Moreover, in this case, through Table 1, we have $\mathfrak{U}_2=\mathcal{O}3+\mathcal{O}(1+\omega)$ and $h(v_2)=1/3$.
So, it follows from Lemma \ref{Lem1} that
\begin{equation*}
(1+\omega)\gamma_\ell=(1+\omega)(a_\ell+b_\ell\omega)=(a_\ell-31b_\ell)+(a_\ell+2b_\ell)\omega
\end{equation*}
for each $1\leq\ell\leq m$, which leads to $3|(a_\ell+2b_\ell)$.
Therefore, one has
\begin{equation*}
h\big(v_2^{r_2}\big)=\frac{r_2}{3}=\sum_{\ell=1}^mN\Big(\frac{\gamma_\ell}{3}\Big)
=\sum_{\ell=1}^m\bigg(\frac{a_\ell^2}{9}+\frac{a_\ell b_\ell}{9}+\frac{31b_\ell^2}{9}\bigg)=:\sum_{\ell=1}^mP_{123}(a_\ell,b_\ell),
\end{equation*}
where $a_\ell$ and $b_\ell$ are integers satisfying $3|(a_\ell+2b_\ell)$.

For $1\leq\delta\leq2$, let $r(\delta)$ be the smallest positive integer such that $r(\delta)\equiv\delta~(\mathrm{mod}~3)$ and $r(\delta)/3$ is represented by $\sum_{\ell=1}^mP_{123}(a_\ell,b_\ell)$ for some positive integer $m$.
Below, we derive that by choosing $a_\ell,b_\ell$ properly, $\sum_{\ell=1}^3P_{123}(a_\ell,b_\ell)$ is equal to $r(\delta)/3$, plus an universal form.
Thus, $r_2/3$, with $r_2\geq r(\delta)$ and $r_2\equiv\delta~(\mathrm{mod}~3)$, can be represented by $\sum_{\ell=1}^3P_{123}(a_\ell,b_\ell)$.
\begin{enumerate}
\item $r(1)=22$: choose $a_\ell=3\tilde{a}_\ell+2,b_\ell=-1$ for $1\leq\ell\leq2$ and $a_3=3\tilde{a}_3,b_3=0$ to observe
\begin{equation*}
\sum_{\ell=1}^3P_{123}(a_\ell,b_\ell)=\frac{22}{3}+\tilde{a}_1^2+\tilde{a}_1+\tilde{a}_2^2+\tilde{a}_2+\tilde{a}_3^2=\frac{22}{3}+2T_{\tilde{a}_1}+2T_{\tilde{a}_2}+\tilde{a}_3^2.
\end{equation*}
\item $r(2)=11$: choose $a_1=3\tilde{a}_1+2,b_1=-1$ and $a_\ell=3\tilde{a}_\ell,b_\ell=0$ for $2\leq\ell\leq3$ to observe
\begin{equation*}
\sum_{\ell=1}^3P_{123}(a_\ell,b_\ell)=\frac{11}{3}+\tilde{a}_1^2+\tilde{a}_1+\tilde{a}_2^2+\tilde{a}_3^2=\frac{11}{3}+2T_{\tilde{a}_1}+\tilde{a}_2^2+\tilde{a}_3^2.
\end{equation*}
\end{enumerate}
As $2T_x+y^2+z^2$ and $2T_x+2T_y+z^2$ are universal by \cite{zS15}, we have $\mathfrak{S}_{123}(1)=\big\{\mathcal{O}v_1^{r_1}:r_1\geq1\big\}\bigcup\big\{\mathfrak{U}_2v_2^{r_2}:r_2\geq3\,\,\text{and}\,\,r_2\neq4,5,7,8,10,13,16,19\big\}$ and $g_{123}(1)=4$.

\vskip8pt\noindent{\bf Case 14.} $E=\mathbb{Q}\big(\sqrt{-187}\big)$ with $\mathcal{O}=\mathbb{Z}+\mathbb{Z}\omega$ for $\omega=\frac{1+\sqrt{-187}}{2}$.

By Lemma \ref{Lem2}, one has $m_{187}=4$.
Moreover, in this case, through Table 1, we have $\mathfrak{U}_2=\mathcal{O}7+\mathcal{O}(-2+\omega)$ and $h(v_2)=1/7$.
So, it follows via Lemma \ref{Lem1} that
\begin{equation*}
(-2+\omega)\gamma_\ell=(-2+\omega)(a_\ell+b_\ell\omega)=(-2a_\ell-47b_\ell)+(a_\ell-b_\ell)\omega
\end{equation*}
for each $1\leq\ell\leq m$, which leads to $7|(a_\ell-b_\ell)$.
Therefore, one has
\begin{equation*}
h\big(v_2^{r_2}\big)=\frac{r_2}{7}=\sum_{\ell=1}^mN\Big(\frac{\gamma_\ell}{7}\Big)
=\sum_{\ell=1}^m\bigg(\frac{a_\ell^2}{49}+\frac{a_\ell b_\ell}{49}+\frac{47b_\ell^2}{49}\bigg)=:\sum_{\ell=1}^mP_{187}(a_\ell,b_\ell),
\end{equation*}
where $a_\ell$ and $b_\ell$ are integers satisfying $7|(a_\ell-b_\ell)$.

Note $P_{187}(7\tilde{a}_2,7)+P_{187}(7\tilde{a}_3,0)+P_{187}(7\tilde{a}_4,0)=47+2T_{\tilde{a}_2}+\tilde{a}_3^2+\tilde{a}_4^2$ represents all positive integers $r\geq47$, and $P_{187}(7\tilde{a}_2,0)+P_{187}(7\tilde{a}_3,0)+P_{187}(7\tilde{a}_4,0)=\tilde{a}_2^2+\tilde{a}_3^2+\tilde{a}_4^2$ represents all positive integers $r\neq4^a(8b+7)$ for nonnegative integers $a,b$; moreover, note
$15=P_{187}(12,-2)+P_{187}(20,-1)$,
$23=P_{187}(8,1)+P_{187}(10,-4)+P_{187}(14,0)$,
$28=P_{187}(8,1)+P_{187}(10,-4)+P_{187}(21,0)$,
$31=P_{187}(12,-2)+P_{187}(20,-1)+P_{187}(28,0)$,
and $39=P_{187}(17,3)+P_{187}(18,4)$.
Combining these cases together, we see that $\sum_{\ell=2}^4P_{187}(a_\ell,b_\ell)$ represents all positive integers except for $7$.

For $1\leq\delta\leq6$, let $r(\delta)$ be the smallest positive integer such that $r(\delta)\equiv\delta~(\mathrm{mod}~7)$ and $r(\delta)/7$ is represented by $P_{187}(a_1,b_1)$.
Then, as analyzed above, $r_2/7$ is represented by $\sum_{\ell=1}^4P_{187}(a_\ell,b_\ell)$ for every $r_2\geq r(\delta)$ and $r_2\equiv\delta~(\mathrm{mod}~7)$ except for $r_2=r(\delta)+49$.
It is sufficient to check the representations of $r(\delta)/7+7$ and $r_2/7$, with $r_2<r(\delta)$ and $r_2\equiv\delta~(\mathrm{mod}~7)$, by $\sum_{\ell=1}^mP_{187}(a_\ell,b_\ell)$, and further by $\sum_{\ell=1}^4P_{187}(a_\ell,b_\ell)$, below
{\small\begin{center}
\setlength{\arrayrulewidth}{0.2mm}
\setlength{\tabcolsep}{2pt}
\renewcommand{\arraystretch}{0.81}
\begin{tabular}{|m{1.37cm}|m{6.26cm}|m{1.81cm}|m{2.08cm}|}
\hline\vskip2pt $r(\delta)$ & \vskip2pt $r_2<r(\delta)$ and $r_2\equiv\delta~(\mathrm{mod}~7)$ & \vskip2pt $r_2=r(\delta)$  & \vskip2pt $r_2=r(\delta)+49$   \\[0.5ex]

\hline\vskip2pt $r(1)=29$   & \vskip2pt $22=2P_{187}(6,-1)$                                    & \vskip2pt $P_{187}(13,-1)$ & \vskip2pt $P_{187}(8,1)$ $+$   \\[0.5ex]
                            &           Others cannot be represented.                          &                            &           $P_{187}(20,-1)$     \\[0.5ex]

\hline\vskip2pt $r(2)=44$   & \vskip2pt None can be represented.                               & \vskip2pt $P_{187}(12,-2)$ & \vskip2pt $P_{187}(7,0)$ $+$   \\[0.5ex]
                            &                                                                  &                            &           $P_{187}(8,1)$ $+$   \\[0.5ex]
                            &                                                                  &                            &           $P_{187}(14,0)$ $+$  \\[0.5ex]
                            &                                                                  &                            &           $P_{187}(15,1)$      \\[0.5ex]

\hline\vskip2pt $r(3)=17$   & \vskip2pt None can be represented.                               & \vskip2pt $P_{187}(8,1)$   & \vskip2pt $P_{187}(6,-1)$ $+$  \\[0.5ex]
                            &                                                                  &                            &           $2P_{187}(7,0)$ $+$  \\[0.5ex]
                            &                                                                  &                            &           $P_{187}(15,1)$      \\[0.5ex]

\hline\vskip2pt $r(4)=11$   & \vskip2pt None can be represented.                               & \vskip2pt $P_{187}(6,-1)$  & \vskip2pt $2P_{187}(7,0)$ $+$  \\[0.5ex]
                            &                                                                  &                            &           $P_{187}(8,1)$ $+$   \\[0.5ex]
                            &                                                                  &                            &           $P_{187}(13,-1)$     \\[0.5ex]

\hline\vskip2pt $r(5)=61$   & \vskip2pt $33=3P_{187}(6,-1)$                                    & \vskip2pt $P_{187}(20,-1)$ & \vskip2pt $2P_{187}(7,0)$ $+$  \\[0.5ex]
                            &           $40=P_{187}(6,-1)+P_{187}(13,-1)$                      &                            &           $P_{187}(8,1)$ $+$   \\[0.5ex]
                            &           $47=P_{187}(6,-1)+P_{187}(7,0)+P_{187}(13,-1)$         &                            &           $P_{187}(22,1)$      \\[0.5ex]
                            &           $54=P_{187}(6,-1)+2P_{187}(7,0)+P_{187}(13,-1)$        &                            &                                \\[0.5ex]
                            &           Others cannot be represented.                          &                            &                                \\[0.5ex]

\hline\vskip2pt $r(6)=41$   & \vskip2pt $34=2P_{187}(8,1)$                                     & \vskip2pt $P_{187}(15,1)$  & \vskip2pt $P_{187}(13,-1)$ $+$ \\[0.5ex]
                            &           Others cannot be represented.                          &                            &           $P_{187}(20,-1)$     \\[0.5ex]
\hline
\end{tabular}
\end{center}}
As the first five numbers that can be represented by $P_{187}(a_\ell,b_\ell)$ are $1,11/7,17/7,$
$4,29/7$, we see, for $r_2=1,2,3,4,5,6,8,9,10,12,13,15,16,19,20,23,26,27,30,37$, that $r_2/7$ cannot be represented by $\sum_{\ell=1}^mP_{187}(a_\ell,b_\ell)$ for any positive integer $m$.
Thus, we have $\mathfrak{S}_{187}(1)=\big\{\mathcal{O}v_1^{r_1}:r_1\geq1\big\}\bigcup\big\{\mathfrak{U}_2v_2^{r_2}:r_2\geq7\,\,\text{and}\,\,r_2\neq8,9,10,12,13,15,$
$16,19,20,23,26,27,30,37\big\}$ and $g_{187}(1)=4$.

\vskip8pt\noindent{\bf Case 15.} $E=\mathbb{Q}\big(\sqrt{-235}\big)$ with $\mathcal{O}=\mathbb{Z}+\mathbb{Z}\omega$ for $\omega=\frac{1+\sqrt{-235}}{2}$.

By Lemma \ref{Lem2}, one has $m_{235}=4$.
Moreover, in this case, through Table 1, we have $\mathfrak{U}_2=\mathcal{O}5+\mathcal{O}(2+\omega)$ and $h(v_2)=1/5$.
So, it follows from Lemma \ref{Lem1} that
\begin{equation*}
(2+\omega)\gamma_\ell=(2+\omega)(a_\ell+b_\ell\omega)=(2a_\ell-59b_\ell)+(a_\ell+3b_\ell)\omega
\end{equation*}
for each $1\leq\ell\leq m$, which yields $5|(a_\ell+3b_\ell)$ and further $5|(2a_\ell+b_\ell)$.
So,
\begin{equation*}
h\big(v_2^{r_2}\big)=\frac{r_2}{5}=\sum_{\ell=1}^mN\Big(\frac{\gamma_\ell}{5}\Big)
=\sum_{\ell=1}^m\bigg(\frac{a_\ell^2}{25}+\frac{a_\ell b_\ell}{25}+\frac{59b_\ell^2}{25}\bigg)=:\sum_{\ell=1}^mP_{235}(a_\ell,b_\ell),
\end{equation*}
where $a_\ell$ and $b_\ell$ are integers satisfying $5|(2a_\ell+b_\ell)$.

For $1\leq\delta\leq4$, let $r(\delta)$ be the smallest positive integer such that $r(\delta)\equiv\delta~(\mathrm{mod}~5)$ and $r(\delta)/5$ is represented by $\sum_{\ell=1}^mP_{235}(a_\ell,b_\ell)$ for some positive integer $m$.
Below, we show that by picking $a_\ell,b_\ell$ properly, $\sum_{\ell=1}^3P_{235}(a_\ell,b_\ell)$, or $\sum_{\ell=1}^4P_{235}(a_\ell,b_\ell)$, is equal to $r(\delta)/5$, plus an universal form.
Thus, $r_2/5$, with $r_2\geq r(\delta)$ and $r_2\equiv\delta~(\mathrm{mod}~5)$, can be represented by $\sum_{\ell=1}^3P_{235}(a_\ell,b_\ell)$ or $\sum_{\ell=1}^4P_{235}(a_\ell,b_\ell)$.
\begin{enumerate}
\item $r(1)=26$: choose $a_\ell=5\tilde{a}_\ell+2,b_\ell=1$ for $1\leq\ell\leq2$ and $a_3=5\tilde{a}_3,b_3=0$ to observe
\begin{equation*}
\sum_{\ell=1}^3P_{235}(a_\ell,b_\ell)=\frac{26}{5}+\tilde{a}_1^2+\tilde{a}_1+\tilde{a}_2^2+\tilde{a}_2+\tilde{a}_3^2=\frac{26}{5}+2T_{\tilde{a}_1}+2T_{\tilde{a}_2}+\tilde{a}_3^2.
\end{equation*}
\item $r(2)=47$: choose $a_1=5\tilde{a}_1-1,b_1=2$ and $a_\ell=5\tilde{a}_\ell,b_\ell=0$ for $2\leq\ell\leq4$ to observe
\begin{equation*}
\sum_{\ell=1}^4P_{235}(a_\ell,b_\ell)=\frac{47}{5}+\tilde{a}_1^2+\tilde{a}_2^2+\tilde{a}_3^2+\tilde{a}_4^2.
\end{equation*}
\item $r(3)=13$: choose $a_1=5\tilde{a}_1+2,b_1=1$ and $a_\ell=5\tilde{a}_\ell,b_\ell=0$ for $2\leq\ell\leq3$ to observe
\begin{equation*}
\sum_{\ell=1}^3P_{235}(a_\ell,b_\ell)=\frac{13}{5}+\tilde{a}_1^2+\tilde{a}_1+\tilde{a}_2^2+\tilde{a}_3^2=\frac{13}{5}+2T_{\tilde{a}_1}+\tilde{a}_2^2+\tilde{a}_3^2.
\end{equation*}
\item $r(4)=39$: choose $a_1=2,b_1=1$, $a_\ell=5\tilde{a}_\ell+2,b_\ell=1$ for $2\leq\ell\leq3$ and $a_4=5\tilde{a}_4,b_4=0$ to observe
\begin{equation*}
\sum_{\ell=1}^4P_{235}(a_\ell,b_\ell)=\frac{39}{5}+\tilde{a}_2^2+\tilde{a}_2+\tilde{a}_3^2+\tilde{a}_3+\tilde{a}_4^2=\frac{39}{5}+2T_{\tilde{a}_2}+2T_{\tilde{a}_3}+\tilde{a}_4^2.
\end{equation*}
\end{enumerate}
As $2T_x+y^2+z^2$ and $2T_x+2T_y+z^2$ are universal by \cite{zS15}, we have $\mathfrak{S}_{235}(1)=\big\{\mathcal{O}v_1^{r_1}:r_1\geq1\big\}\bigcup\big\{\mathfrak{U}_2v_2^{r_2}:r_2\geq5\,\,\text{and}\,\,r_2\neq6,7,8,9,11,12,14,16,17,19,21,22,24,27,29,$ $32,34,37,42\big\}$ and $g_{235}(1)=4$.

\vskip8pt\noindent{\bf Case 16.} $E=\mathbb{Q}\big(\sqrt{-267}\big)$ with $\mathcal{O}=\mathbb{Z}+\mathbb{Z}\omega$ for $\omega=\frac{1+\sqrt{-267}}{2}$.

By Lemma \ref{Lem2}, one has $m_{267}=4$.
Moreover, in this case, through Table 1, we have $\mathfrak{U}_2=\mathcal{O}3+\mathcal{O}(1+\omega)$ and $h(v_2)=1/3$.
So, it follows from Lemma \ref{Lem1} that
\begin{equation*}
(1+\omega)\gamma_\ell=(1+\omega)(a_\ell+b_\ell\omega)=(a_\ell-67b_\ell)+(a_\ell+2b_\ell)\omega
\end{equation*}
for each $1\leq\ell\leq m$, which leads to $3|(a_\ell+2b_\ell)$.
Therefore, one has
\begin{equation*}
h\big(v_2^{r_2}\big)=\frac{r_2}{3}=\sum_{\ell=1}^mN\Big(\frac{\gamma_\ell}{3}\Big)
=\sum_{\ell=1}^m\bigg(\frac{a_\ell^2}{9}+\frac{a_\ell b_\ell}{9}+\frac{67b_\ell^2}{9}\bigg)=:\sum_{\ell=1}^mP_{267}(a_\ell,b_\ell),
\end{equation*}
where $a_\ell$ and $b_\ell$ are integers satisfying $3|(a_\ell+2b_\ell)$.

For $1\leq\delta\leq2$, let $r(\delta)$ be the smallest positive integer such that $r(\delta)\equiv\delta~(\mathrm{mod}~3)$ and $r(\delta)/3$ is represented by $\sum_{\ell=1}^mP_{267}(a_\ell,b_\ell)$ for some positive integer $m$.
Below, we derive that by choosing $a_\ell,b_\ell$ properly, $\sum_{\ell=1}^3P_{267}(a_\ell,b_\ell)$ is equal to $r(\delta)/3$, plus an universal form.
Thus, $r_2/3$, with $r_2\geq r(\delta)$ and $r_2\equiv\delta~(\mathrm{mod}~3)$, can be represented by $\sum_{\ell=1}^3P_{267}(a_\ell,b_\ell)$.
\begin{enumerate}
\item $r(1)=46$: choose $a_\ell=3\tilde{a}_\ell+2,b_\ell=-1$ for $1\leq\ell\leq2$ and $a_3=3\tilde{a}_3,b_3=0$ to observe
\begin{equation*}
\sum_{\ell=1}^3P_{267}(a_\ell,b_\ell)=\frac{46}{3}+\tilde{a}_1^2+\tilde{a}_1+\tilde{a}_2^2+\tilde{a}_2+\tilde{a}_3^2=\frac{46}{3}+2T_{\tilde{a}_1}+2T_{\tilde{a}_2}+\tilde{a}_3^2.
\end{equation*}
\item $r(2)=23$: choose $a_1=3\tilde{a}_1+2,b_1=-1$ and $a_\ell=3\tilde{a}_\ell,b_\ell=0$ for $2\leq\ell\leq3$ to observe
\begin{equation*}
\sum_{\ell=1}^3P_{267}(a_\ell,b_\ell)=\frac{23}{3}+\tilde{a}_1^2+\tilde{a}_1+\tilde{a}_2^2+\tilde{a}_3^2=\frac{23}{3}+2T_{\tilde{a}_1}+\tilde{a}_2^2+\tilde{a}_3^2.
\end{equation*}
\end{enumerate}
As $2T_x+y^2+z^2$ and $2T_x+2T_y+z^2$ are universal by \cite{zS15}, we have $\mathfrak{S}_{267}(1)=\big\{\mathcal{O}v_1^{r_1}:r_1\geq1\big\}\bigcup\big\{\mathfrak{U}_2v_2^{r_2}:r_2\geq3\,\,\text{and}\,\,r_2\neq4,5,7,8,10,11,13,14,16,17,19,20,22,25,28,$ $31,34,37,40,43\big\}$ and $g_{267}(1)=4$.

\vskip8pt\noindent{\bf Case 17.} $E=\mathbb{Q}\big(\sqrt{-403}\big)$ with $\mathcal{O}=\mathbb{Z}+\mathbb{Z}\omega$ for $\omega=\frac{1+\sqrt{-403}}{2}$.

By Lemma \ref{Lem2}, one has $m_{403}=4$.
Moreover, in this case, through Table 1, we have $\mathfrak{U}_2=\mathcal{O}11+\mathcal{O}(6+\omega)$ and $h(v_2)=1/11$.
So, it follows by Lemma \ref{Lem1} that
\begin{equation*}
(6+\omega)\gamma_\ell=(6+\omega)(a_\ell+b_\ell\omega)=(6a_\ell-101b_\ell)+(a_\ell+7b_\ell)\omega
\end{equation*}
for each $1\leq\ell\leq m$, which leads to $11|(a_\ell+7b_\ell)$.
Therefore, one has
\begin{equation*}
h\big(v_2^{r_2}\big)=\frac{r_2}{11}=\sum_{\ell=1}^mN\Big(\frac{\gamma_\ell}{11}\Big)
=\sum_{\ell=1}^m\bigg(\frac{a_\ell^2}{121}+\frac{a_\ell b_\ell}{121}+\frac{101b_\ell^2}{121}\bigg)=:\sum_{\ell=1}^mP_{403}(a_\ell,b_\ell),
\end{equation*}
where $a_\ell$ and $b_\ell$ are integers satisfying $11|(a_\ell+7b_\ell)$.

Notice $P_{403}(11\tilde{a}_2,11)+P_{403}(11\tilde{a}_3,0)+P_{403}(11\tilde{a}_4,0)=101+2T_{\tilde{a}_2}+\tilde{a}_3^2+\tilde{a}_4^2$ represents all positive integers $r\geq101$, and $P_{403}(11\tilde{a}_2,0)+P_{403}(11\tilde{a}_3,0)+P_{403}(11\tilde{a}_4,0)=\tilde{a}_2^2+\tilde{a}_3^2+\tilde{a}_4^2$ represents all positive integers $r\neq4^a(8b+7)$ for nonnegative integers $a,b$; in addition, one can observe that
$15=P_{403}(3,-2)+P_{403}(11,0)+P_{403}(21,-3)$,
$23=P_{403}(2,-5)+P_{403}(7,-1)+P_{403}(11,0)$,
$28=P_{403}(11,0)+P_{403}(25,-2)+P_{403}(28,-4)$,
$31=P_{403}(22,0)+P_{403}(25,-2)+P_{403}(28,-4)$,
$39=P_{403}(3,-2)+P_{403}(21,-3)+P_{403}(55,0)$,
$47=P_{403}(2,-5)+P_{403}(7,-1)+P_{403}(55,0)$,
$55=P_{403}(7,-1)$ $+$\\ $P_{403}(72,-4)$,
$60=P_{403}(26,1)+P_{403}(76,-3)$,
$63=P_{403}(3,-2)+P_{403}(21,-3)+P_{403}(77,0)$,
$71=P_{403}(2,-5)+P_{403}(7,-1)+P_{403}(77,0)$,
$79=P_{403}(18,-1)+P_{403}(56,-8)$,
$87=P_{403}(62,-1)+P_{403}(80,-2)$,
$92=P_{403}(29,-1)+P_{403}(98,-3)$,
and $95=P_{403}(18,-1)+P_{403}(44,0)+P_{403}(56,-8)$.
Merging these cases together, we see that $\sum_{\ell=2}^4P_{403}(a_\ell,b_\ell)$ represents all positive integers except for $7$.

For $1\leq\delta\leq10$, let $r(\delta)$ be the smallest positive integer such that $r(\delta)\equiv\delta~(\mathrm{mod}~11)$ and $r(\delta)/11$ is represented by $P_{403}(a_1,b_1)$.
Then, as shown above, $r_2/11$ is represented by $\sum_{\ell=1}^4P_{403}(a_\ell,b_\ell)$ for every $r_2\geq r(\delta)$ and $r_2\equiv\delta~(\mathrm{mod}~11)$ except for $r_2=r(\delta)+77$.
It is enough to check the representations of $r(\delta)/11+7$ and $r_2/11$, with $r_2<r(\delta)$ and $r_2\equiv\delta~(\mathrm{mod}~11)$, by $\sum_{\ell=1}^mP_{403}(a_\ell,b_\ell)$, and further by $\sum_{\ell=1}^4P_{403}(a_\ell,b_\ell)$, below
{\small\begin{center}
\setlength{\arrayrulewidth}{0.2mm}
\setlength{\tabcolsep}{2pt}
\renewcommand{\arraystretch}{0.81}
\begin{tabular}{|m{1.71cm}|m{6.6cm}|m{1.81cm}|m{2.08cm}|}
\hline\vskip2pt $r(\delta)$ & \vskip2pt $r_2<r(\delta)$ and $r_2\equiv\delta~(\mathrm{mod}~11)$ & \vskip2pt $r_2=r(\delta)$  & \vskip2pt $r_2=r(\delta)+77$   \\[0.5ex]

\hline\vskip2pt $r(1)=89$   & \vskip2pt \,\,\,$78=2P_{403}(7,-1)+P_{403}(14,-2)$                & \vskip2pt $P_{403}(10,-3)$ & \vskip2pt 2$P_{403}(29,-1)$    \\[0.5ex]
                            &           Others cannot be represented.                           &                            &                                \\[0.5ex]

\hline\vskip2pt $r(2)=13$   & \vskip2pt None can be represented.                                & \vskip2pt $P_{403}(7,-1)$  & \vskip2pt $P_{403}(3, -2)$ $+$ \\[0.5ex]
                            &                                                                   &                            &           2$P_{403}(11,0)$ $+$ \\[0.5ex]
                            &                                                                   &                            &           $P_{403}(15,1)$      \\[0.5ex]

\hline\vskip2pt $r(3)=124$  & \vskip2pt \,\,\,$91=3P_{403}(7,-1)+P_{403}(14,-2)$                & \vskip2pt $P_{403}(30,2)$  & \vskip2pt $P_{403}(11,0)$ $+$  \\[0.5ex]
                            &           $102=P_{403}(7,-1)+P_{403}(10,-3)$                      &                            &           $P_{403}(21,-3)$ $+$ \\[0.5ex]
                            &           $113=P_{403}(7,-1)+P_{403}(10,-3)+P_{403}(11,0)$        &                            &           $P_{403}(26,1)$      \\[0.5ex]
                            &           Others cannot be represented.                           &                            &                                \\[0.5ex]

\hline\vskip2pt $r(4)=37$   & \vskip2pt \,\,\,$26=2P_{403}(7,-1)$                               & \vskip2pt $P_{403}(18,-1)$ & \vskip2pt $P_{403}(15,1)$ $+$  \\[0.5ex]
                            &           Others cannot be represented.                           &                            &           $P_{403}(29,-1)$     \\[0.5ex]

\hline\vskip2pt $r(5)=137$  & \vskip2pt \,\,\,$93=3P_{403}(15,1)$                               & \vskip2pt $P_{403}(37,1)$  & \vskip2pt $P_{403}(5,4)$ $+$   \\[0.5ex]
                            &           $104=P_{403}(15,1)+P_{403}(19,2)$                       &                            &           $P_{403}(11,0)$ $+$  \\[0.5ex]
                            &           $115=P_{403}(11,0)+P_{403}(15,1)+P_{403}(19,2)$         &                            &           $P_{403}(14,-2)$     \\[0.5ex]
                            &           $126=2P_{403}(11,0)+P_{403}(15,1)+P_{403}(19,2)$        &                            &                                \\[0.5ex]
                            &           Others cannot be represented.                           &                            &                                \\[0.5ex]

\hline\vskip2pt $r(6)=83$   & \vskip2pt \,\,\,$39=3P_{403}(7,-1)$                               & \vskip2pt $P_{403}(29,-1)$ & \vskip2pt $P_{403}(3,-2)$ $+$  \\[0.5ex]
                            &           \,\,\,$50=P_{403}(3,-2)+P_{403}(7,-1)$                  &                            &           $P_{403}(7,-1)$ $+$  \\[0.5ex]
                            &           \,\,\,$61=P_{403}(3,-2)+P_{403}(7,-1)+P_{403}(11,0)$    &                            &           $P_{403}(11,0)$ $+$  \\[0.5ex]
                            &           \,\,\,$72=P_{403}(3,-2)+P_{403}(7,-1)+2P_{403}(11,0)$   &                            &           $P_{403}(12,3)$      \\[0.5ex]
                            &           Others cannot be represented.                           &                            &                                \\[0.5ex]

\hline\vskip2pt $r(7)=73$   & \vskip2pt \,\,\,$62=2P_{403}(15,1)$                               & \vskip2pt $P_{403}(26,1)$  & \vskip2pt $P_{403}(7,-1)$ $+$  \\[0.5ex]
                            &           Others cannot be represented.                           &                            &           $P_{403}(37,1)$      \\[0.5ex]

\hline\vskip2pt $r(8)=52$   & \vskip2pt None can be represented.                                & \vskip2pt $P_{403}(14,-2)$ & \vskip2pt $P_{403}(3,-2)$ $+$  \\[0.5ex]
                            &                                                                   &                            &           $P_{403}(8,2)$ $+$   \\[0.5ex]
                            &                                                                   &                            &           $P_{403}(11,0)$ $+$  \\[0.5ex]
                            &                                                                   &                            &           $P_{403}(18,-1)$     \\[0.5ex]
\hline
\end{tabular}
\end{center}}

\newpage\noindent{\color{blue}table continued}

{\small\begin{center}
\setlength{\arrayrulewidth}{0.2mm}
\setlength{\tabcolsep}{2pt}
\renewcommand{\arraystretch}{0.81}
\begin{tabular}{|m{1.71cm}|m{6.6cm}|m{1.81cm}|m{2.08cm}|}
\hline\vskip2pt $r(9)=31$   & \vskip2pt None can be represented.                                & \vskip2pt $P_{403}(15,1)$  & \vskip2pt $P_{403}(4,1)$ $+$   \\[0.5ex]
                            &                                                                   &                            &           $P_{403}(7,-1)$ $+$  \\[0.5ex]
                            &                                                                   &                            &           $P_{403}(11,0)$ $+$  \\[0.5ex]
                            &                                                                   &                            &           $P_{403}(19,2)$      \\[0.5ex]

\hline\vskip2pt $r(10)=197$ & \vskip2pt \,\,\,$65=P_{403}(7,-1)+P_{403}(14,-2)$                 & \vskip2pt $P_{403}(34,3)$  & \vskip2pt $P_{403}(10,-3)$ $+$ \\[0.5ex]
                            &           \,\,\,$76=P_{403}(7,-1)+P_{403}(11,0)+P_{403}(14,-2)$   &                            &           $P_{403}(18,-1)$ $+$ \\[0.5ex]
                            &           \,\,\,$87=P_{403}(7,-1)+2P_{403}(11,0)+P_{403}(14,-2)$  &                            &           $P_{403}(36,-2)$     \\[0.5ex]
                            &           \,\,\,$98=2P_{403}(3,-2)+P_{403}(7,-1)+P_{403}(11,0)$   &                            &                                \\[0.5ex]
                            &           $109=P_{403}(7,-1)+P_{403}(14,-2)+P_{403}(22,0)$        &                            &                                \\[0.5ex]
                            &           $120=P_{403}(1,3)+P_{403}(3,-2)$                        &                            &                                \\[0.5ex]
                            &           $131=P_{403}(1,3)+P_{403}(3,-2)+P_{403}(11,0)$          &                            &                                \\[0.5ex]
                            &           $142=P_{403}(1,3)+P_{403}(3,-2)+2P_{403}(11,0)$         &                            &                                \\[0.5ex]
                            &           $153=P_{403}(1,3)+2P_{403}(7,-1)+P_{403}(22,0)$         &                            &                                \\[0.5ex]
                            &           $164=P_{403}(7,-1)+P_{403}(40,-1)$                      &                            &                                \\[0.5ex]
                            &           $175=P_{403}(7,-1)+P_{403}(11,0)+P_{403}(40,-1)$        &                            &                                \\[0.5ex]
                            &           $186=P_{403}(7,-1)+2P_{403}(11,0)+P_{403}(40,-1)$       &                            &                                \\[0.5ex]
                            &           Others cannot be represented.                           &                            &                                \\[0.5ex]
\hline
\end{tabular}
\end{center}}
Since the first eight numbers that can be represented by $P_{403}(a_\ell,b_\ell)$ are $1,13/11,$
$31/11,37/11,4,52/11,73/11,83/11$, we see, for $r_2=1,2,3,4,5,6,7,8,9,10,12,14,$
$15,16,17,18,19,20,21,23,25,27,28,29,30,32,34,36,38,40,41,43,45,47,49,51,$\newline
$54,56,58,60,67,69,71,80,82$, that $r_2/11$ cannot be represented by $\sum_{\ell=1}^mP_{403}(a_\ell,b_\ell)$ for any positive integer $m$.
Thus, $\mathfrak{S}_{403}(1)=\big\{\mathcal{O}v_1^{r_1}:r_1\geq1\big\}\bigcup\big\{\mathfrak{U}_2v_2^{r_2}:r_2\geq11\,\,\text{and}$
$\,\,r_2\neq12,14,15,16,17,18,19,20,21,23,25,27,28,29,30,32,34,36,38,40,41,43,45,$
$47,49,51,54,56,58,60,67,69,71,80,82\big\}$ and $g_{403}(1)=4$.

\vskip8pt\noindent{\bf Case 18.} $E=\mathbb{Q}\big(\sqrt{-427}\big)$ with $\mathcal{O}=\mathbb{Z}+\mathbb{Z}\omega$ for $\omega=\frac{1+\sqrt{-427}}{2}$.

By Lemma \ref{Lem2}, one has $m_{427}=4$.
Moreover, in this case, through Table 1, we have $\mathfrak{U}_2=\mathcal{O}7+\mathcal{O}(3+\omega)$ and $h(v_2)=1/7$.
So, it follows from Lemma \ref{Lem1} that
\begin{equation*}
(3+\omega)\gamma_\ell=(3+\omega)(a_\ell+b_\ell\omega)=(3a_\ell-107b_\ell)+(a_\ell+4b_\ell)\omega
\end{equation*}
for each $1\leq\ell\leq m$, which yields $7|(a_\ell+4b_\ell)$ and further $7|(2a_\ell+b_\ell)$.
So,
\begin{equation*}
h\big(v_2^{r_2}\big)=\frac{r_2}{7}=\sum_{\ell=1}^mN\Big(\frac{\gamma_\ell}{7}\Big)
=\sum_{\ell=1}^m\bigg(\frac{a_\ell^2}{49}+\frac{a_\ell b_\ell}{49}+\frac{107b_\ell^2}{49}\bigg)=:\sum_{\ell=1}^mP_{427}(a_\ell,b_\ell),
\end{equation*}
where $a_\ell$ and $b_\ell$ are integers satisfying $7|(2a_\ell+b_\ell)$.

For $1\leq\delta\leq 6$, let $r(\delta)$ be the smallest positive integer such that $r(\delta)\equiv\delta~(\mathrm{mod}~7)$ and $r(\delta)/7$ is represented by $\sum_{\ell=1}^mP_{427}(a_\ell,b_\ell)$ for some positive integer $m$.
Below, we show that by picking $a_\ell,b_\ell$ properly, $\sum_{\ell=1}^3P_{427}(a_\ell,b_\ell)$, or $\sum_{\ell=1}^4P_{427}(a_\ell,b_\ell)$, is equal to $r(\delta)/7$, plus an universal form.
Thus, $r_2/7$, with $r_2\geq r(\delta)$ and $r_2\equiv\delta~(\mathrm{mod}~7)$, can be represented by $\sum_{\ell=1}^3P_{427}(a_\ell,b_\ell)$ or $\sum_{\ell=1}^4P_{427}(a_\ell,b_\ell)$.
\begin{enumerate}
\item $r(1)=78$: choose $a_1=1,b_1=-2$, $a_2=7\tilde{a}_2+3,b_2=1$ and $a_\ell=7\tilde{a}_\ell,b_\ell=0$ for $3\leq\ell\leq4$ to observe
\begin{equation*}
\sum_{\ell=1}^4P_{427}(a_\ell,b_\ell)=\frac{78}{7}+\tilde{a}_2^2+\tilde{a}_2+\tilde{a}_3^2+\tilde{a}_4^2=\frac{78}{7}+2T_{\tilde{a}_2}+\tilde{a}_3^2+\tilde{a}_4^2.
\end{equation*}
\item $r(2)=51$: choose $a_1=3,b_1=1$, $a_\ell=7\tilde{a}_\ell+3,b_\ell=1$ for $2\leq\ell\leq3$ and $a_4=7\tilde{a}_4,b_4=0$ to observe
\begin{equation*}
\sum_{\ell=1}^4P_{427}(a_\ell,b_\ell)=\frac{51}{7}+\tilde{a}_2^2+\tilde{a}_2+\tilde{a}_3^2+\tilde{a}_3+\tilde{a}_4^2=\frac{51}{7}+2T_{\tilde{a}_2}+2T_{\tilde{a}_3}+\tilde{a}_4^2.
\end{equation*}
\item $r(3)=17$: choose $a_1=7\tilde{a}_1+3,b_1=1$ and $a_\ell=7\tilde{a}_\ell,b_\ell=0$ for $2\leq\ell\leq3$ to observe
\begin{equation*}
\sum_{\ell=1}^3P_{427}(a_\ell,b_\ell)=\frac{17}{7}+\tilde{a}_1^2+\tilde{a}_1+\tilde{a}_2^2+\tilde{a}_3^2=\frac{17}{7}+2T_{\tilde{a}_1}+\tilde{a}_2^2+\tilde{a}_3^2.
\end{equation*}
\item $r(4)=95$: choose $a_1=1,b_1=-2$, $a_\ell=7\tilde{a}_\ell+3,b_\ell=1$ for $2\leq\ell\leq3$ and $a_4=7\tilde{a}_4,b_4=0$ to observe
\begin{equation*}
\sum_{\ell=1}^4P_{427}(a_\ell,b_\ell)=\frac{95}{7}+\tilde{a}_2^2+\tilde{a}_2+\tilde{a}_3^2+\tilde{a}_3+\tilde{a}_4^2=\frac{95}{7}+2T_{\tilde{a}_2}+2T_{\tilde{a}_3}+\tilde{a}_4^2.
\end{equation*}
\item $r(5)=61$: choose $a_1=7\tilde{a}_1+1,b_1=-2$ and $a_\ell=7\tilde{a}_\ell,b_\ell=0$ for $2\leq\ell\leq4$ to observe
\begin{equation*}
\sum_{\ell=1}^4P_{427}(a_\ell,b_\ell)=\frac{61}{7}+\tilde{a}_1^2+\tilde{a}_2^2+\tilde{a}_3^2+\tilde{a}_4^2.
\end{equation*}
\item $r(6)=34$: choose $a_\ell=7\tilde{a}_\ell+3,b_\ell=1$ for $1\leq\ell\leq2$ and $a_3=7\tilde{a}_3,b_3=0$ to observe
\begin{equation*}
\sum_{\ell=1}^3P_{427}(a_\ell,b_\ell)=\frac{34}{7}+\tilde{a}_1^2+\tilde{a}_1+\tilde{a}_2^2+\tilde{a}_2+\tilde{a}_3^2=\frac{34}{7}+2T_{\tilde{a}_1}+2T_{\tilde{a}_2}+\tilde{a}_3^2.
\end{equation*}
\end{enumerate}
Recall that $2T_x+y^2+z^2$ and $2T_x+2T_y+z^2$ are universal by \cite{zS15}.
Therefore, we have $\mathfrak{S}_{427}(1)=\big\{\mathcal{O}v_1^{r_1}:r_1\geq1\big\}\bigcup\big\{\mathfrak{U}_2v_2^{r_2}:r_2\geq7\,\,\text{and}\,\,r\neq8,9,10,11,12,13,15,16,18,$ $19,20,22,23,25,26,27,29,30,32,33,36,37,39,40,43,44,46,47,50,53,54,57,60,$\newline
$64,67,71,74,81,88\big\}$ and $g_{427}(1)=4$.
\end{proof}

\section{Computations in the case of class number 3}\label{Sec:Cl3}
In this section, we discuss the imaginary quadratic fields $E$ with class number $3$.
Now, assume that $\mathfrak{U}_2$ and $\mathfrak{U}_3$ are representatives of the two non-principle ideal classes, and let $L_2=\mathfrak{U}_2v_2$ and $L_3=\mathfrak{U}_3v_3$ be associated unimodular lattices as shown in Table 2, Appendix 2.
As we observed in the introduction, each positive definite integral unary Hermitian lattice is in the isometry class of the lattice $L_j^{r_j}=\mathfrak{U}_jv_j^{r_j}$, with $r_j$ a positive integer and $v_j^{r_j}$ a vector such that $h\big(v_j^{r_j}\big)=r_jh(v_j)$, for $j=2,3$.

Before to proceed to our discussions, we first observe that $r_2$ and $r_3$ take the same set of values for which $L_2^{r_2}$ and $L_3^{r_3}$ are in $\mathfrak{S}_d(1)$, and there is a transformation between their representations by $I_m$.
Therefore, in the proof of Theorem \ref{Thm2}, one only needs to determine the values of, say, $r_2$ in each individual case.

\begin{lem}\label{Lem4}
Let $E=\mathbb{Q}\big(\sqrt{-d}\big)$ be an imaginary quadratic field with class number $3$, and let $\mathfrak{U}_2$ and $\mathfrak{U}_3$ be representatives of the two non-principle ideal classes.
Then, $r_2,r_3$ assume the same set of values for which $L_2^{r_2},L_3^{r_3}\in\mathfrak{S}_d(1)$ and there is a transformation between their representations by $I_m$.
\end{lem}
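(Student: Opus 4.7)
The plan is to exploit the complex-conjugation symmetry between $\mathfrak{U}_2$ and $\mathfrak{U}_3$. The first step is a class-group observation: since the class group has order $3$, its two non-trivial elements $[\mathfrak{U}_2]$ and $[\mathfrak{U}_3]$ must be mutually inverse, and because $\mathfrak{U}_2\overline{\mathfrak{U}_2}=k_2\mathcal{O}$ is principal we have $[\overline{\mathfrak{U}_2}]=[\mathfrak{U}_2]^{-1}=[\mathfrak{U}_3]$. Since the isometry class of each $L_3^{r_3}$ depends only on the ideal class of $\mathfrak{U}_3$ (after normalizing $h(v_3^{r_3})=r_3/k_3$), I may choose the representative $\mathfrak{U}_3=\overline{\mathfrak{U}_2}$, in which case $k_3=k_2=:k$.

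Writing $\mathfrak{U}_2=\mathcal{O}k+\mathcal{O}(s+t\omega)$, this choice yields $\mathfrak{U}_3=\mathcal{O}k+\mathcal{O}(s-t\omega)$ when $d\equiv 1,2\pmod 4$ (since $\bar\omega=-\omega$) and $\mathfrak{U}_3=\mathcal{O}k+\mathcal{O}\big((s+t)-t\omega\big)$ when $d\equiv 3\pmod 4$ (since $\bar\omega=1-\omega$). Given any representation $\sigma:L_2^{r_2}\to I_m$ with $\sigma(kv_2^{r_2})=\gamma_1z_1+\cdots+\gamma_mz_m$ and $\gamma_\ell=a_\ell+b_\ell\omega$, I would define the candidate $\tau:L_3^{r_2}\to I_m$ by
\[
\tau(kv_3^{r_2}):=\bar\gamma_1z_1+\cdots+\bar\gamma_mz_m,
\]
so that the coefficients of $\bar\gamma_\ell$ are $(a_\ell,-b_\ell)$ in the first case and $(a_\ell+b_\ell,-b_\ell)$ in the second. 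Because $N(\bar\gamma_\ell)=N(\gamma_\ell)$, the norm equation $r_2/k=\sum_\ell N(\gamma_\ell/k)$ is preserved automatically with $r_3=r_2$.

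The heart of the proof is then the verification that the pair $(\tilde a_\ell,\tilde b_\ell)$ arising from $\bar\gamma_\ell$ satisfies the congruence conditions of Lemma \ref{Lem1} applied to $\mathfrak{U}_3$. In both residue cases this reduces to a substitution of $(s_3,t_3)=(s,-t)$ or $(s+t,-t)$ into the formulas of Lemma \ref{Lem1}; in the first case the new conditions match the old ones directly up to sign, and in the second case the new first condition turns out to be the sum of the two original conditions while the new second condition equals the negative of the original second condition, so the two systems of congruences are equivalent. Finally, complex conjugation is an involution, so $\sigma\leftrightarrow\tau$ is a bijection between the representation sets of $L_2^{r_2}$ and $L_3^{r_2}$ into $I_m$; this yields simultaneously the equality of the admissible index sets $\{r_2:L_2^{r_2}\in\mathfrak{S}_d(1)\}=\{r_3:L_3^{r_3}\in\mathfrak{S}_d(1)\}$ and the claimed transformation of representations. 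The only real obstacle is the bookkeeping in the $d\equiv 3\pmod 4$ case, where the shift $s\mapsto s+t$ forces a small linear-combination argument rather than a direct term-by-term match; otherwise the proof is essentially formal.
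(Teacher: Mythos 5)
Your proof is correct and is essentially the paper's argument: the paper's substitution $a'_\ell=a_\ell+b_\ell$, $b'_\ell=-b_\ell$ is precisely complex conjugation $\gamma_\ell\mapsto\overline{\gamma_\ell}$ (since $\overline{\omega}=1-\omega$ when $d\equiv3\pmod 4$), and the representatives in Table 2 already satisfy $\mathfrak{U}_3=\overline{\mathfrak{U}_2}$, so your class-group reduction and the paper's coordinate computation come to the same thing, with the same conclusion that the congruence systems are exchanged and $P_d$ is preserved. The only cosmetic difference is that you also treat $d\equiv1,2\pmod 4$, which is vacuous here since every class-number-$3$ field in Section~4 has $d\equiv3\pmod 4$.
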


\begin{proof}
Note that $\mathfrak{U}_2=\big(k,\frac{n-1}{2}+\omega\big)$ and $\mathfrak{U}_3=\big(k,-\frac{n+1}{2}+\omega\big)$ in Table 2, where $n$ is the smallest positive odd integer such that $-d\equiv n^2~(\mathrm{mod}~k)$.
Recall here $d\equiv 3~(\mathrm{mod}~4)$ throughout this section.
We have $k|\big(a_\ell+\frac{n+1}{2}b_\ell\big)$ and $k|\big(\frac{n-1}{2}a_\ell-\frac{1+d}{4}b_\ell\big)$ for $\mathfrak{U}_2$, and $k|\big(a_\ell-\frac{n-1}{2}b_\ell\big)$ and $k|\big(\frac{n+1}{2}a_\ell+\frac{1+d}{4}b_\ell\big)$ for $\mathfrak{U}_3$, using Lemma \ref{Lem1}.
It is easy to verify $k|\big(a_\ell+\frac{n+1}{2}b_\ell\big),k|\big(a_\ell-\frac{n-1}{2}b_\ell\big)$ lead to $k|\big(\frac{n-1}{2}a_\ell-\frac{1+d}{4}b_\ell\big),k|\big(\frac{n+1}{2}a_\ell+\frac{1+d}{4}b_\ell\big)$, resp-ectively.
So, when calculating the values of $r_2,r_3$ for which $L_2^{r_2},L_3^{r_3}\in\mathfrak{S}_d(1)$, we only need conditions $k|\big(a_\ell+\frac{n+1}{2}b_\ell\big)$ for $\mathfrak{U}_2$ and $k|\big(a_\ell-\frac{n-1}{2}b_\ell\big)$ for $\mathfrak{U}_3$.

Next, we show for each pair $a_\ell,b_\ell$ with $k|\big(a_\ell+\frac{n+1}{2}b_\ell\big)$, there is a pair $a'_\ell,b'_\ell$ with $k|\big(a'_\ell-\frac{n-1}{2}b'_\ell\big)$ and $P_d\big(a'_\ell,b'_\ell\big)=P_d(a_\ell,b_\ell)$, and vice versa.
First, recall
\begin{equation*}
P_d(a,b)=\frac{a^2}{k^2}+\frac{ab}{k^2}+\frac{(1+d)b^2}{4k^2}.
\end{equation*}
Now, set $a'_\ell=a_\ell+b_\ell$ and $b'_\ell=-b_\ell$ for each pair of integers $a_\ell,b_\ell$ with $k|\big(a_\ell+\frac{n+1}{2}b_\ell\big)$ to see
\begin{equation*}
a'_\ell-\frac{n-1}{2}b'_\ell=a_\ell+b_\ell+\frac{n-1}{2}b_\ell=a_\ell+\frac{n+1}{2}b_\ell,
\end{equation*}
so that $k|\big(a'_\ell-\frac{n-1}{2}b'_\ell\big)$; further, one has
\begin{equation*}
\begin{aligned}
P_d\big(a'_\ell,b'_\ell\big)&=\frac{(a_\ell+b_\ell)^2}{k^2}+\frac{(a_\ell+b_\ell)(-b_\ell)}{k^2}+\frac{(1+d)(-b_\ell)^2}{4k^2}\\
&=\frac{a_\ell^2}{k^2}+\frac{a_\ell b_\ell}{k^2}+\frac{(1+d)b_\ell^2}{4k^2}=P_d(a_\ell,b_\ell).
\end{aligned}
\end{equation*}
Conversely, choose $a_\ell=a'_\ell+b'_\ell$ and $b_\ell=-b'_\ell$ for each pair of integers $a'_\ell,b'_\ell$ with $k|\big(a'_\ell-\frac{n-1}{2}b'_\ell\big)$ to see $k|\big(a_\ell+\frac{n+1}{2}b_\ell\big)$ and $P_d(a_\ell,b_\ell)=P_d\big(a'_\ell,b'_\ell\big)$.
Thus, $r_2$ and $r_3$ assume the same set of values for which $L_2^{r_2}$ and $L_3^{r_3}$ are in $\mathfrak{S}_d(1)$.
\end{proof}

\begin{thm}\label{Thm2}
Assume $E=\mathbb{Q}\big(\sqrt{-d}\big)$ is an imaginary quadratic field with class number $3$, and $\mathcal{O}$ is its ring of integers.
Then, the set $\mathfrak{S}_d(1)$ and the value of $g_d(1)$ are listed in the table below
\begin{center}
\setlength{\arrayrulewidth}{0.3mm}
\setlength{\tabcolsep}{3pt}
\renewcommand{\arraystretch}{1.0}
\begin{tabular}{|m{1.68cm}|m{8.96cm}|m{0.96cm}|}
\hline\vskip2pt $\mathbb{Q}\big(\sqrt{-d}\big)$   & \vskip2pt $\mathfrak{S}_d(1)$; $j=2,3$                                          & \vskip2pt $g_d(1)$               \\[0.6ex]

\hline\vskip2pt $\mathbb{Q}\big(\sqrt{-23}\big)$  & \vskip2pt $L_1^{r_1}=\mathcal{O}v_1^{r_1}$ with $h\big(v_1^{r_1}\big)=r_1\geq1$ & \vskip2pt $3$                    \\[0.8ex]
                                                  &       $L_j^{r_j}=\mathfrak{U}_jv_j^{r_j}$ with $h\big(v_j^{r_j}\big)=r_j/2$ for $r_j\geq2$                       & \\[0.6ex]

\hline\vskip2pt $\mathbb{Q}\big(\sqrt{-31}\big)$  & \vskip2pt $L_1^{r_1}=\mathcal{O}v_1^{r_1}$ with $h\big(v_1^{r_1}\big)=r_1\geq1$ & \vskip2pt $4$                    \\[0.8ex]
                                                  &       $L_j^{r_j}=\mathfrak{U}_jv_j^{r_j}$ with $h\big(v_j^{r_j}\big)=r_j/2$ for $r_j\geq2$ and $r_j\neq3$        & \\[0.6ex]

\hline\vskip2pt $\mathbb{Q}\big(\sqrt{-59}\big)$  & \vskip2pt $L_1^{r_1}=\mathcal{O}v_1^{r_1}$ with $h\big(v_1^{r_1}\big)=r_1\geq1$ & \vskip2pt $4$                    \\[0.8ex]
                                                  &       $L_j^{r_j}=\mathfrak{U}_jv_j^{r_j}$ with $h\big(v_j^{r_j}\big)=r_j/3$ for $r_j\geq3$ and $r_j\neq4$        & \\[0.6ex]

\hline\vskip2pt $\mathbb{Q}\big(\sqrt{-83}\big)$  & \vskip2pt $L_1^{r_1}=\mathcal{O}v_1^{r_1}$ with $h\big(v_1^{r_1}\big)=r_1\geq1$ & \vskip2pt $4$                    \\[0.8ex]
                                                  &       $L_j^{r_j}=\mathfrak{U}_jv_j^{r_j}$ with $h\big(v_j^{r_j}\big)=r_j/3$ for $r_j\geq3$ and $r_j\neq4,5,8$    & \\[0.6ex]

\hline\vskip2pt $\mathbb{Q}\big(\sqrt{-107}\big)$ & \vskip2pt $L_1^{r_1}=\mathcal{O}v_1^{r_1}$ with $h\big(v_1^{r_1}\big)=r_1\geq1$ & \vskip2pt $4$                    \\[0.8ex]
                                                  &       $L_j^{r_j}=\mathfrak{U}_jv_j^{r_j}$ with $h\big(v_j^{r_j}\big)=r_j/3$ for $r_j\geq3$ and $r_j\neq4,5,7,8$, & \\[0.8ex]
                                                  &       $10$                                                                                                       & \\[0.6ex]

\hline\vskip2pt $\mathbb{Q}\big(\sqrt{-139}\big)$ & \vskip2pt $L_1^{r_1}=\mathcal{O}v_1^{r_1}$ with $h\big(v_1^{r_1}\big)=r_1\geq1$ & \vskip2pt $4$                    \\[0.8ex]
                                                  &       $L_j^{r_j}=\mathfrak{U}_jv_j^{r_j}$ with $h\big(v_j^{r_j}\big)=r_j/5$ for $r_j\geq5$ and $r_j\neq6,8,9$    & \\[0.6ex]

\hline\vskip2pt $\mathbb{Q}\big(\sqrt{-211}\big)$ & \vskip2pt $L_1^{r_1}=\mathcal{O}v_1^{r_1}$ with $h\big(v_1^{r_1}\big)=r_1\geq1$ & \vskip2pt $4$                    \\[0.8ex]
                                                  &       $L_j^{r_j}=\mathfrak{U}_jv_j^{r_j}$ with $h\big(v_j^{r_j}\big)=r_j/5$ for $r_j\geq5$ and $r_j\neq6,7,8,9,$ & \\[0.8ex]
                                                  &       $12,14,17$                                                                                                 & \\[0.6ex]

\hline\vskip2pt $\mathbb{Q}\big(\sqrt{-283}\big)$ & \vskip2pt $L_1^{r_1}=\mathcal{O}v_1^{r_1}$ with $h\big(v_1^{r_1}\big)=r_1\geq1$ & \vskip2pt $4$                    \\[0.8ex]
                                                  &       $L_j^{r_j}=\mathfrak{U}_jv_j^{r_j}$ with $h\big(v_j^{r_j}\big)=r_j/7$ for $r_j\geq7$ and $r_j\neq8,9,10,$  & \\[0.8ex]
                                                  &       $12,15,16,17,19$                                                                                           & \\[0.6ex]

\hline\vskip2pt $\mathbb{Q}\big(\sqrt{-307}\big)$ & \vskip2pt $L_1^{r_1}=\mathcal{O}v_1^{r_1}$ with $h\big(v_1^{r_1}\big)=r_1\geq1$ & \vskip2pt $4$                    \\[0.8ex]
                                                  &       $L_j^{r_j}=\mathfrak{U}_jv_j^{r_j}$ with $h\big(v_j^{r_j}\big)=r_j/7$ for $r_j\geq7$ and $r_j\neq8,9,10,$  & \\[0.8ex]
                                                  &       $12,13,15,16,20,23,27$                                                                                     & \\[0.6ex]

\hline\vskip2pt $\mathbb{Q}\big(\sqrt{-331}\big)$ & \vskip2pt $L_1^{r_1}=\mathcal{O}v_1^{r_1}$ with $h\big(v_1^{r_1}\big)=r_1\geq1$ & \vskip2pt $4$                    \\[0.8ex]
                                                  &       $L_j^{r_j}=\mathfrak{U}_jv_j^{r_j}$ with $h\big(v_j^{r_j}\big)=r_j/5$ for $r_j\geq5$ and $r_j\neq6,7,8,9,$ & \\[0.8ex]
                                                  &       $11,12,13,14,16,18,21,23,26,28,33$                                                                         & \\[0.6ex]

\hline\vskip2pt $\mathbb{Q}\big(\sqrt{-379}\big)$ & \vskip2pt $L_1^{r_1}=\mathcal{O}v_1^{r_1}$ with $h\big(v_1^{r_1}\big)=r_1\geq1$ & \vskip2pt $4$                    \\[0.8ex]
                                                  &       $L_j^{r_j}=\mathfrak{U}_jv_j^{r_j}$ with $h\big(v_j^{r_j}\big)=r_j/5$ for $r_j\geq5$ and $r_j\neq6,7,8,9,$ & \\[0.8ex]
                                                  &       $11,12,13,14,16,17,18,21,22,26,27,31,32,36$                                                                & \\[0.6ex]
\hline
\end{tabular}
\end{center}

\newpage\noindent{\color{blue}table continued}

\begin{center}
\setlength{\arrayrulewidth}{0.3mm}
\setlength{\tabcolsep}{3pt}
\renewcommand{\arraystretch}{1.0}
\begin{tabular}{|m{1.68cm}|m{8.96cm}|m{0.96cm}|}
\hline\vskip2pt $\mathbb{Q}\big(\sqrt{-499}\big)$ & \vskip2pt $L_1^{r_1}=\mathcal{O}v_1^{r_1}$ with $h\big(v_1^{r_1}\big)=r_1\geq1$ & \vskip2pt $4$                    \\[0.8ex]
                                                  &       $L_j^{r_j}=\mathfrak{U}_jv_j^{r_j}$ with $h\big(v_j^{r_j}\big)=r_j/5$ for $r_j\geq5$ and $r_j\neq6,7,8,9,$ & \\[0.8ex]
                                                  &       $11,12,13,14,16,17,18,19,21,22,23,24,26,27,28,32,33,$                                                      & \\[0.8ex]
                                                  &       $37,38,42$                                                                                                 & \\[0.6ex]

\hline\vskip2pt $\mathbb{Q}\big(\sqrt{-547}\big)$ & \vskip2pt $L_1^{r_1}=\mathcal{O}v_1^{r_1}$ with $h\big(v_1^{r_1}\big)=r_1\geq1$ & \vskip2pt $4$                    \\[0.8ex]
                                                  &       $L_j^{r_j}=\mathfrak{U}_jv_j^{r_j}$ with $h\big(v_j^{r_j}\big)=r_j/11$ for $r_j\geq11$ and $r_j\neq12,14,$ & \\[0.8ex]
                                                  &       $15,16,17,18,20,21,23,25,27,28,31,34,36$                                                                   & \\[0.6ex]

\hline\vskip2pt $\mathbb{Q}\big(\sqrt{-643}\big)$ & \vskip2pt $L_1^{r_1}=\mathcal{O}v_1^{r_1}$ with $h\big(v_1^{r_1}\big)=r_1\geq1$ & \vskip2pt $4$                    \\[0.8ex]
                                                  &       $L_j^{r_j}=\mathfrak{U}_jv_j^{r_j}$ with $h\big(v_j^{r_j}\big)=r_j/7$ for $r_j\geq7$ and $r_j\neq8,9,10,$  & \\[0.8ex]
                                                  &       $11,12,13,15,16,17,18,19,20,22,24,25,26,27,32,33,34,$                                                      & \\[0.8ex]
                                                  &       $39,40,41,47,48,55$                                                                                        & \\[0.6ex]

\hline\vskip2pt $\mathbb{Q}\big(\sqrt{-883}\big)$ & \vskip2pt $L_1^{r_1}=\mathcal{O}v_1^{r_1}$ with $h\big(v_1^{r_1}\big)=r_1\geq1$ & \vskip2pt $4$                    \\[0.8ex]
                                                  &       $L_j^{r_j}=\mathfrak{U}_jv_j^{r_j}$ with $h\big(v_j^{r_j}\big)=r_j/13$ for $r_j\geq13$ and $r_j\neq14,15,$ & \\[0.8ex]
                                                  &       $16,18,19,20,21,22,23,24,25,27,28,32,33,35,36,37,38,$                                                      & \\[0.8ex]
                                                  &       $40,41,45,49,50,53,54,66$                                                                                  & \\[0.6ex]

\hline\vskip2pt $\mathbb{Q}\big(\sqrt{-907}\big)$ & \vskip2pt $L_1^{r_1}=\mathcal{O}v_1^{r_1}$ with $h\big(v_1^{r_1}\big)=r_1\geq1$ & \vskip2pt $5$                    \\[0.8ex]
                                                  &       $L_j^{r_j}=\mathfrak{U}_jv_j^{r_j}$ with $h\big(v_j^{r_j}\big)=r_j/13$ for $r_j\geq13$ and $r_j\neq14,15,$ & \\[0.8ex]
                                                  &       $16,17,18,20,21,22,24,25,27,28,29,30,31,33,34,35,37,$                                                      & \\[0.8ex]
                                                  &       $40,43,44,47,48,50,56,63$                                                                                  & \\[0.6ex]
\hline
\end{tabular}
\end{center}
\end{thm}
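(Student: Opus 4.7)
The plan is to adapt, case by case, the template already developed for the class number $2$ cases in the proof of Theorem \ref{Thm1}, exploiting the reduction provided by Lemma \ref{Lem4} to cut the work in half. Since $r_2$ and $r_3$ realize the same set of values, and the representations by $I_m$ are transformed into each other by the substitution $(a_\ell,b_\ell)\mapsto(a_\ell+b_\ell,-b_\ell)$ given in the proof of Lemma \ref{Lem4}, I only need to carry out the analysis for $L_2^{r_2}=\mathfrak{U}_2v_2^{r_2}$. Reading off $\mathfrak{U}_2=\mathcal{O}k+\mathcal{O}\bigl(\tfrac{n-1}{2}+\omega\bigr)$ from Table 2 in Appendix 2 (with $k=k_2$ and $n$ the smallest positive odd integer with $-d\equiv n^2\pmod{k}$), Lemma \ref{Lem1} yields a single congruence $k\mid(a_\ell+\tfrac{n+1}{2}b_\ell)$, after which
\[
h\bigl(v_2^{r_2}\bigr)=\frac{r_2}{k}=\sum_{\ell=1}^mN\Big(\frac{\gamma_\ell}{k}\Big)=\sum_{\ell=1}^m\Bigl(\frac{a_\ell^2}{k^2}+\frac{a_\ell b_\ell}{k^2}+\frac{(1+d)b_\ell^2}{4k^2}\Bigr)=:\sum_{\ell=1}^mP_d(a_\ell,b_\ell).
\]

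By Lemmata \ref{Lem2} and \ref{Lem3}, every $r_2\equiv 0\pmod{k}$ is already handled by $I_{m_d}$, so the heart of the proof is to analyse $r_2\not\equiv 0\pmod{k}$. For each nonzero residue $\delta\pmod{k}$, I would first identify the smallest $r(\delta)\equiv\delta\pmod k$ for which $r(\delta)/k$ is realized by some $P_d(a_1,b_1)$ (this is where the eventual list of exceptional small $r_2$ comes from). Then, parallel to Cases 7, 11, 12, 13, 15, 16, 18 of Theorem \ref{Thm1}, I would construct an explicit decomposition
\[
\sum_{\ell=1}^{m_d}P_d(a_\ell,b_\ell)=\frac{r(\delta)}{k}+F(\tilde a,\tilde b),
\]
with one $(a_1,b_1)$ chosen to contribute exactly $r(\delta)/k$, and the remaining $a_\ell=k\tilde a_\ell+c_\ell,\ b_\ell=c_\ell'$ chosen so that the tail $F$ is (up to constants) one of the universal ternary or quaternary forms I may quote: $2T_x+y^2+z^2$ and $2T_x+2T_y+z^2$ from \cite{zS15}, $x^2+y^2+z^2+w^2$, or a classical form certified universal by \cite{jhC}, \cite{mB} or \cite{BH}. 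Where the congruence on $(a_\ell,b_\ell)$ forces $k\mid a_\ell$ and the resulting form is only locally (not globally) universal, I would invoke \cite{JKS} and a local-global argument exactly as done in Case 5 for $d=15$ to cover the residual classes $3^{2a+1}(3b+1)$ etc.

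With this, every sufficiently large $r_2$ in every residue class is represented, and what remains is the finite tail: for each $r_2<r(\delta)$ in each nonzero residue class (and, when the base representative uses a fresh summand such as in Cases 14 and 17, the isolated value $r_2=r(\delta)+k^2$), I would either exhibit an explicit decomposition as a sum of $\le 4$ values of $P_d$ or certify non-representability by listing the finitely many values achievable by $P_d$ below the required magnitude. These tables, together with the decomposition step, pin down $\mathfrak{S}_d(1)$ and force $g_d(1)=m_d$ for each $d\in\{23,31,59,83,107,139,211,283,307,331,379,499,547,643,883\}$.

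The main obstacle is the last case $d=907$, where the table records $g_d(1)=5$ rather than $4$, so the strategy above must \emph{fail} at rank $4$ and succeed only at rank $5$. This requires two things: a genuine obstruction showing that some $r_2/13$ cannot be written as a sum of four values of $P_{907}$ with the required congruence (forcing $g_{907}(1)\geq 5$), and a new decomposition at rank $5$, where I expect to split off a $P_{907}(a_1,b_1)$ contributing $r(\delta)/13$ plus one additional norm and have the remaining three summands cover a universal quaternary tail of the shape $\tilde a_2^2+\tilde a_3^2+\tilde a_4^2+2T_{\tilde a_5}$ or $\tilde a_2^2+\tilde a_3^2+\tilde a_4^2+\tilde a_5^2$. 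Establishing the obstruction in the $4$-summand case—most naturally via a local argument at a prime dividing the conductor $k=13$ together with the $290$-theorem criterion \eqref{Eq2.2}—is the delicate point, and after that the remaining finite tables of exceptions for $d=907$ are checked exactly as in the other cases.
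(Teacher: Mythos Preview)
Your overall strategy matches the paper's: reduce to $j=2$ via Lemma~\ref{Lem4}, derive the congruence $k\mid(a_\ell+\tfrac{n+1}{2}b_\ell)$, and for each nonzero residue $\delta\pmod k$ locate the smallest $r(\delta)$ with $r(\delta)/k=P_d(a_1,b_1)$, then cover all $r_2\ge r(\delta)$ by a universal tail and handle the finite set $r_2<r(\delta)$ by explicit enumeration. The paper streamlines one step you describe differently: rather than building a separate decomposition $r(\delta)/k+F$ for each $\delta$ (as in Theorem~\ref{Thm1}), from Case~3 onward it proves once that $\sum_{\ell=2}^4P_d(a_\ell,b_\ell)$ already represents every positive integer (combining $\tfrac{d+1}{4}+2T_{\tilde a_2}+\tilde a_3^2+\tilde a_4^2$, the three-square theorem, and a short list of ad~hoc representations for the integers $4^a(8b+7)<\tfrac{d+1}{4}$). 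This buys uniformity across residue classes. For $d=23$ the paper also invokes a result you did not mention, Sun's universality of $x(2x+1)+y(3y+1)+z(3z+1)$ from \cite{zS17}.

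Your treatment of $d=907$ contains a genuine gap. The obstruction forcing $g_{907}(1)=5$ is \emph{not} local and has nothing to do with the $290$-theorem: the value $r_2=81$ lies in $\mathfrak{S}_{907}(1)$ (so it is represented by some $I_m$), but a finite search over the discrete values of $P_{907}$ shows that $81/13$ admits no decomposition into four summands, while $81=P_{907}(5,-1)+P_{907}(8,1)+3P_{907}(13,0)$ gives a five-term one. A local argument at $13$ cannot detect this, since representability itself holds. Correspondingly, there is no need for a separate rank-$5$ scheme: the paper keeps the same rank-$4$ framework (one $P_d(a_1,b_1)$ plus the universal three-summand tail) for all $r_2\ge r(\delta)$, and the value $5$ emerges solely from the finite check of the exceptional $r_2<r(\delta)$, where $r_2=81$ is the unique case requiring a fifth summand.
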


\begin{proof}
Notice again by Lemmata \ref{Lem2} and \ref{Lem3}, there is the smallest positive integer $m_d$ such that $L_1^{r_1}\to I_{m_d}$ for all positive integers $r_1$ and $L_j^{r_j}\to I_{m_d}$ for all positive integers $r_j\equiv0~(\mathrm{mod}~k_j)$ with $j=2,3$; so, these lattices are in $\mathfrak{S}_d(1)$ and $g_d(1)\geq m_d$.
Thus, it suffices to consider only the cases where $r_j\not\equiv0~(\mathrm{mod}~k_j)$ for $j=2,3$.
We will show that $g_d(1)=m_d$ for all the cases except for $d=907$ where $g_d(1)=5$.
By Lemma \ref{Lem4}, we only need to determine the value of $r_2$ in each individual case.

\vskip8pt\noindent{\bf Case 1.} $E=\mathbb{Q}\big(\sqrt{-23}\big)$ with $\mathcal{O}=\mathbb{Z}+\mathbb{Z}\omega$ for $\omega=\frac{1+\sqrt{-23}}{2}$.

By Lemma \ref{Lem2}, one has $m_{23}=3$.
Moreover, in this case, through Table 2, we have $\mathfrak{U}_2=\mathcal{O}2+\mathcal{O}\omega$ and $h(v_2)=1/2$.
So, it follows from Lemma \ref{Lem1} that
\begin{equation*}
\omega\gamma_\ell=\omega(a_\ell+b_\ell\omega)=-6b_\ell+(a_\ell+b_\ell)\omega
\end{equation*}
for each $1\leq\ell\leq m$, which leads to $2|(a_\ell+b_\ell)$.
Therefore, one has
\begin{equation*}
h\big(v_2^{r_2}\big)=\frac{r_2}{2}=\sum_{\ell=1}^mN\Big(\frac{\gamma_\ell}{2}\Big)
=\sum_{\ell=1}^m\bigg(\frac{a_\ell^2}{4}+\frac{a_\ell b_\ell}{4}+\frac{6b_\ell^2}{4}\bigg)=:\sum_{\ell=1}^mP_{23}(a_\ell,b_\ell),
\end{equation*}
where $a_\ell,b_\ell$ are either both odd integers or are both even integers.

When $r_2$ is a positive odd integer, $r_2/2$ is a positive half integer; so, $r_2$ is at least $3$.
We choose $a_1=2\tilde{a}_1+1,b_1=-1$ and $a_\ell=-2-b_\ell$ for $2\leq\ell\leq3$.
Then,
\begin{equation*}
\sum_{\ell=1}^3P_{23}(a_\ell,b_\ell)=\frac{3}{2}+2+\frac{1}{2}\big(2\tilde{a}_1^2+\tilde{a}_1+3b_2^2+b_2+3b_3^2+b_3\big).
\end{equation*}
By Sun \cite[Theorem 1.2]{zS17}, we know $2\tilde{a}_1^2+\tilde{a}_1+3b_2^2+b_2+3b_3^2+b_3$ is universal.
Also, notice $3/2=P_{23}(1,-1)$ and $5/2=P_{23}(1,-1)+P_{23}(2,0)$.
Thus, $r_2/2$ is represented by $\sum_{\ell=1}^3P_{23}(a_\ell,b_\ell)$ for all odd integers $r_2\geq3$.

Therefore, by Lemma \ref{Lem4}, we have $\mathfrak{S}_{23}(1)=\big\{\mathcal{O}v_1^{r_1}:r_1\geq1\big\}\bigcup\big[\bigcup^3_{j=2}\big\{\mathfrak{U}_jv_j^{r_j}:r_j\geq2\big\}\big]$ and $g_{23}(1)=3$.

\vskip8pt\noindent{\bf Case 2.} $E=\mathbb{Q}\big(\sqrt{-31}\big)$ with $\mathcal{O}=\mathbb{Z}+\mathbb{Z}\omega$ for $\omega=\frac{1+\sqrt{-31}}{2}$.

By Lemma \ref{Lem2}, one has $m_{31}=4$.
Moreover, in this case, through Table 2, we have $\mathfrak{U}_2=\mathcal{O}2+\mathcal{O}\omega$ and $h(v_2)=1/2$.
So, it follows from Lemma \ref{Lem1} that
\begin{equation*}
\omega\gamma_\ell=\omega(a_\ell+b_\ell\omega)=-8b_\ell+(a_\ell+b_\ell)\omega
\end{equation*}
for each $1\leq\ell\leq m$, which leads to $2|(a_\ell+b_\ell)$.
Therefore, one has
\begin{equation*}
h\big(v_2^{r_2}\big)=\frac{r_2}{2}=\sum_{\ell=1}^mN\Big(\frac{\gamma_\ell}{2}\Big)
=\sum_{\ell=1}^m\bigg(\frac{a_\ell^2}{4}+\frac{a_\ell b_\ell}{4}+\frac{8b_\ell^2}{4}\bigg)=:\sum_{\ell=1}^mP_{31}(a_\ell,b_\ell),
\end{equation*}
where $a_\ell,b_\ell$ are either both odd integers or are both even integers.

When $r_2$ is a positive odd integer, $r_2/2$ is a positive half integer; so, $r_2$ is at least $5$.
Note that $P_{31}(2\tilde{a}_2,2)+P_{31}(2\tilde{a}_3,0)+P_{31}(2\tilde{a}_4,0)=8+2T_{\tilde{a}_2}+\tilde{a}_3^2+\tilde{a}_4^2$ represents all positive integers $r\geq8$, while $P_{31}(2\tilde{a}_2,0)+P_{31}(2\tilde{a}_3,0)+P_{31}(2\tilde{a}_4,0)=\tilde{a}_2^2+\tilde{a}_3^2+\tilde{a}_4^2$ represents all positive integers $r\neq4^a(8b+7)$ for nonnegative integers $a,b$; moreover, note $7=P_{31}(1,1)+P_{31}(2,0)+P_{31}(3,-1)$.
Combining these cases together yields that $\sum_{\ell=2}^4P_{31}(a_\ell,b_\ell)$ represents all positive integers.
In addition, notice $5/2=P_{31}(1,1)$ is the smallest positive half integer that can be represented by $P_{31}(a_1,b_1)$.
Thus, $r_2/2$ is represented by $\sum_{\ell=1}^4P_{31}(a_\ell,b_\ell)$ for all odd integers $r_2\geq5$.

Therefore, by Lemma \ref{Lem4}, we have $\mathfrak{S}_{31}(1)=\big\{\mathcal{O}v_1^{r_1}:r_1\geq1\big\}\bigcup\big[\bigcup^3_{j=2}\big\{\mathfrak{U}_jv_j^{r_j}:r_j\geq2\,\,\text{and}\,\,r_j\neq3\big\}\big]$ and $g_{31}(1)=4$.

\vskip8pt\noindent{\bf Case 3.} $E=\mathbb{Q}\big(\sqrt{-59}\big)$ with $\mathcal{O}=\mathbb{Z}+\mathbb{Z}\omega$ for $\omega=\frac{1+\sqrt{-59}}{2}$.

By Lemma \ref{Lem2}, one has $m_{59}=4$.
Moreover, in this case, through Table 2, we have $\mathfrak{U}_2=\mathcal{O}3+\mathcal{O}\omega$ and $h(v_2)=1/3$.
So, it follows from Lemma \ref{Lem1} that
\begin{equation*}
\omega\gamma_\ell=\omega(a_\ell+b_\ell\omega)=-15b_\ell+(a_\ell+b_\ell)\omega
\end{equation*}
for each $1\leq\ell\leq m$, which leads to $3|(a_\ell+b_\ell)$.
Therefore, one has
\begin{equation*}
h\big(v_2^{r_2}\big)=\frac{r_2}{3}=\sum_{\ell=1}^mN\Big(\frac{\gamma_\ell}{3}\Big)
=\sum_{\ell=1}^m\bigg(\frac{a_\ell^2}{9}+\frac{a_\ell b_\ell}{9}+\frac{15b_\ell^2}{9}\bigg)=:\sum_{\ell=1}^mP_{59}(a_\ell,b_\ell),
\end{equation*}
where $a_\ell$ and $b_\ell$ are integers satisfying $3|(a_\ell+b_\ell)$.

Notice $P_{59}(3\tilde{a}_2,3)+P_{59}(3\tilde{a}_3,0)+P_{59}(3\tilde{a}_4,0)=15+2T_{\tilde{a}_2}+\tilde{a}_3^2+\tilde{a}_4^2$ represents all positive integers $r\geq15$, while $P_{59}(3\tilde{a}_2,0)+P_{59}(3\tilde{a}_3,0)+P_{59}(3\tilde{a}_4,0)=\tilde{a}_2^2+\tilde{a}_3^2+\tilde{a}_4^2$ represents all positive integers $r\neq4^a(8b+7)$ for nonnegative integers $a,b$; moreover, note $7=P_{59}(4,-1)+P_{59}(6,0)$.
Merging these cases together yields $\sum_{\ell=2}^4P_{59}(a_\ell,b_\ell)$ represents all positive integers.

For $1\leq\delta\leq2$, let $r(\delta)$ be the smallest positive integer such that $r(\delta)\equiv\delta~(\mathrm{mod}~3)$ and $r(\delta)/3$ is represented by $P_{59}(a_1,b_1)$. Then, as analyzed above, $r_2/3$ is represented by $\sum_{\ell=1}^4P_{59}(a_\ell,b_\ell)$ for every $r_2\geq r(\delta)$ and $r_2\equiv\delta~(\mathrm{mod}~3)$.
It is enough to check the representation of $r_2/3$, with $r_2<r(\delta)$ and $r_2\equiv\delta~(\mathrm{mod}~3)$, by $\sum_{\ell=1}^mP_{59}(a_\ell,b_\ell)$, and further by $\sum_{\ell=1}^4P_{59}(a_\ell,b_\ell)$, below
{\small\begin{center}
\setlength{\arrayrulewidth}{0.2mm}
\setlength{\tabcolsep}{2pt}
\renewcommand{\arraystretch}{0.81}
\begin{tabular}{|m{1.21cm}|m{4.03cm}|m{1.51cm}|}
\hline\vskip2pt $r(\delta)$ & \vskip2pt $r_2<r(\delta)$ and $r_2\equiv\delta~(\mathrm{mod}~3)$ & \vskip2pt $r_2=r(\delta)$ \\[0.5ex]

\hline\vskip2pt $r(1)=7$    & \vskip2pt None can be represented.                               & \vskip2pt $P_{59}(2,1)$   \\[0.5ex]

\hline\vskip2pt $r(2)=5$    & \vskip2pt None can be represented.                               & \vskip2pt $P_{59}(1,-1)$  \\[0.5ex]
\hline
\end{tabular}
\end{center}}

Therefore, by Lemma \ref{Lem4}, we have $\mathfrak{S}_{59}(1)=\big\{\mathcal{O}v_1^{r_1}:r_1\geq1\big\}\bigcup\big[\bigcup^3_{j=2}\big\{\mathfrak{U}_jv_j^{r_j}:r_j\geq3\,\,\text{and}\,\,r_j\neq4\big\}\big]$ and $g_{59}(1)=4$.

It is noteworthy that the arguments in the remaining cases are very similar to those already applied in {\bf Case 3}.
As a matter of fact, one has
\begin{equation*}
h\big(v_2^{r_2}\big)=\frac{r_2}{k}=\sum_{\ell=1}^mN\Big(\frac{\gamma_\ell}{k}\Big)
=\sum_{\ell=1}^m\bigg(\frac{a_\ell^2}{k^2}+\frac{a_\ell b_\ell}{k^2}+\frac{(1+d)b_\ell^2}{4k^2}\bigg)=:\sum_{\ell=1}^mP_d(a_\ell,b_\ell),
\end{equation*}
where $a_\ell,b_\ell$ are integers satisfying $k|\big(a_\ell+\frac{n+1}{2}b_\ell\big)$ by Lemma \ref{Lem4} and $n$ is the smallest positive odd integer such that $-d\equiv n^2~(\mathrm{mod}~k)$.

Note $P_d(k\tilde{a}_2,k)+P_d(k\tilde{a}_3,0)+P_d(k\tilde{a}_4,0)=\frac{d+1}{4}+2T_{\tilde{a}_2}+\tilde{a}_3^2+\tilde{a}_4^2$ represents all positive integers $r\geq\frac{d+1}{4}$, while $P_d(k\tilde{a}_2,0)+P_d(k\tilde{a}_3,0)+P_d(k\tilde{a}_4,0)=\tilde{a}_2^2+\tilde{a}_3^2+\tilde{a}_4^2$ represents all positive integers $r\neq4^a(8b+7)$ for nonnegative integers $a,b$.
One can corroborate that $\sum_{\ell=2}^4P_d(a_\ell,b_\ell)$ represents all the remaining positive integers for all imaginary quadratic fields $\mathbb{Q}\big(\sqrt{-d}\big)$ with class number $3$ case by case \footnote{Interested reader can find associated computational details at
\vskip2pt\hspace{2.6mm}{\url{https://sites.google.com/view/jingbos-number-theory/home?authuser=1}}}.
Consequently, $\sum_{\ell=2}^4P_d(a_\ell,b_\ell)$ is an universal form that represents all positive integers.

For $1\leq\delta\leq k$, let $r(\delta)$ be the smallest positive integer such that $r(\delta)\equiv\delta~(\mathrm{mod}~k)$ and $r(\delta)/k$ is represented by $P_d(a_1,b_1)$. Then, as analyzed above, $r_2/k$ is represented by $\sum_{\ell=1}^4P_d(a_\ell,b_\ell)$ for every $r_2\geq r(\delta)$ and $r_2\equiv\delta~(\mathrm{mod}~k)$.
It is enough to check the representation of $r_2/k$, with $r_2<r(\delta)$ and $r_2\equiv\delta~(\mathrm{mod}~k)$, by $\sum_{\ell=1}^mP_d(a_\ell,b_\ell)$, and then determine the explicit form of $\mathfrak{S}_d(1)$ and the exact value of $g_d(1)$.

In order to avoid tedious repetitions, in the sequel, we shall only provide the table which includes the values of $r(\delta)$ and $r_2$, with $r_2<r(\delta)$ and $r_2\equiv\delta~(\mathrm{mod}~k)$, such that $r_2/k$ is represented by some sum $\sum_{\ell=1}^mP_d(a_\ell,b_\ell)$ in each remaining case.

\vskip8pt\noindent{\bf Case 4.} $E=\mathbb{Q}\big(\sqrt{-83}\big)$ with $\mathcal{O}=\mathbb{Z}+\mathbb{Z}\omega$ for $\omega=\frac{1+\sqrt{-83}}{2}$.

In this case, one has $m_{83}=4$, $k=3$, $n=1$ and
\begin{equation*}
h\big(v_2^{r_2}\big)=\frac{r_2}{3}=\sum_{\ell=1}^mN\Big(\frac{\gamma_\ell}{3}\Big)
=\sum_{\ell=1}^m\bigg(\frac{a_\ell^2}{9}+\frac{a_\ell b_\ell}{9}+\frac{21b_\ell^2}{9}\bigg)=:\sum_{\ell=1}^mP_{83}(a_\ell,b_\ell),
\end{equation*}
where $a_\ell$ and $b_\ell$ are integers satisfying $3|(a_\ell+b_\ell)$.
The numbers $r$, with $r<21$ and $r=4^a(8b+7)$ for $a,b\in\mathbb{Z}_{\geq0}$, are $7,15$, whose detailed representations are available in the link at the footnote of {\bf Case 3}; others are given in the table below
{\small\begin{center}
\setlength{\arrayrulewidth}{0.2mm}
\setlength{\tabcolsep}{2pt}
\renewcommand{\arraystretch}{0.81}
\begin{tabular}{|m{1.36cm}|m{4.03cm}|m{1.51cm}|}
\hline\vskip2pt $r(\delta)$ & \vskip2pt $r_2<r(\delta)$ and $r_2\equiv\delta~(\mathrm{mod}~3)$ & \vskip2pt $r_2=r(\delta)$ \\[0.5ex]

\hline\vskip2pt $r(1)=7$    & \vskip2pt None can be represented.                               & \vskip2pt $P_{83}(1,-1)$  \\[0.5ex]

\hline\vskip2pt $r(2)=11$   & \vskip2pt None can be represented.                               & \vskip2pt $P_{83}(4,-1)$  \\[0.5ex]
\hline
\end{tabular}
\end{center}}

Therefore, by Lemma \ref{Lem4}, we have
$\mathfrak{S}_{83}(1)=\big\{\mathcal{O}v_1^{r_1}:r_1\geq1\big\}\bigcup\big[\bigcup^3_{j=2}\big\{\mathfrak{U}_jv_j^{r_j}:r_j\geq3\,\,\text{and}\,\,r_j\neq4,5,8\big\}\big]$ and $g_{83}(1)=4$.

\vskip8pt\noindent{\bf Case 5.} $E=\mathbb{Q}\big(\sqrt{-107}\big)$ with $\mathcal{O}=\mathbb{Z}+\mathbb{Z}\omega$ for $\omega=\frac{1+\sqrt{-107}}{2}$.

In this case, one has $m_{107}=4$, $k=3$, $n=1$ and
\begin{equation*}
h\big(v_2^{r_2}\big)=\frac{r_2}{3}=\sum_{\ell=1}^mN\Big(\frac{\gamma_\ell}{3}\Big)
=\sum_{\ell=1}^m\bigg(\frac{a_\ell^2}{9}+\frac{a_\ell b_\ell}{9}+\frac{27b_\ell^2}{9}\bigg)=:\sum_{\ell=1}^mP_{107}(a_\ell,b_\ell),
\end{equation*}
where $a_\ell$ and $b_\ell$ are integers satisfying $3|(a_\ell+b_\ell)$.
The numbers $r$, with $r<27$ and $r=4^a(8b+7)$ for $a,b\in\mathbb{Z}_{\geq0}$, are $7,15,23$, whose detailed representations are given in the link at the footnote of {\bf Case 3}; others are given in the table below
{\small\begin{center}
\setlength{\arrayrulewidth}{0.2mm}
\setlength{\tabcolsep}{2pt}
\renewcommand{\arraystretch}{0.81}
\begin{tabular}{|m{1.37cm}|m{4.03cm}|m{1.63cm}|}
\hline\vskip2pt $r(\delta)$ & \vskip2pt $r_2<r(\delta)$ and $r_2\equiv\delta~(\mathrm{mod}~3)$ & \vskip2pt $r_2=r(\delta)$ \\[0.5ex]

\hline\vskip2pt $r(1)=13$   & \vskip2pt None can be represented.                               & \vskip2pt $P_{107}(4,-1)$ \\[0.5ex]

\hline\vskip2pt $r(2)=11$   & \vskip2pt None can be represented.                               & \vskip2pt $P_{107}(2,1)$  \\[0.5ex]
\hline
\end{tabular}
\end{center}}

Therefore, by Lemma \ref{Lem4}, we have $\mathfrak{S}_{107}(1)=\big\{\mathcal{O}v_1^{r_1}:r_1\geq1\big\}\bigcup\big[\bigcup^3_{j=2}\big\{\mathfrak{U}_jv_j^{r_j}:r_j\geq3\,\,\text{and}\,\,r_j\neq4,5,7,8,10\big\}\big]$ and $g_{107}(1)=4$.

\vskip8pt\noindent{\bf Case 6.} $E=\mathbb{Q}\big(\sqrt{-139}\big)$ with $\mathcal{O}=\mathbb{Z}+\mathbb{Z}\omega$ for $\omega=\frac{1+\sqrt{-139}}{2}$.

In this case, one has $m_{139}=4$, $k=5$, $n=1$ and
\begin{equation*}
h\big(v_2^{r_2}\big)=\frac{r_2}{5}=\sum_{\ell=1}^mN\Big(\frac{\gamma_\ell}{5}\Big)
=\sum_{\ell=1}^m\bigg(\frac{a_\ell^2}{25}+\frac{a_\ell b_\ell}{25}+\frac{35b_\ell^2}{25}\bigg)=:\sum_{\ell=1}^mP_{139}(a_\ell,b_\ell),
\end{equation*}
where $a_\ell$ and $b_\ell$ are integers satisfying $5|(a_\ell+b_\ell)$.
The numbers $r$, with $r<35$ and $r=4^a(8b+7)$ for $a,b\in\mathbb{Z}_{\geq0}$, are $7,15,23,28,31$, whose detailed representations are given in the link mentioned earlier; others are given in the table below
{\small\begin{center}
\setlength{\arrayrulewidth}{0.2mm}
\setlength{\tabcolsep}{2pt}
\renewcommand{\arraystretch}{0.81}
\begin{tabular}{|m{1.37cm}|m{4.35cm}|m{1.81cm}|}
\hline\vskip2pt $r(\delta)$ & \vskip2pt $r_2<r(\delta)$ and $r_2\equiv\delta~(\mathrm{mod}~5)$ & \vskip2pt $r_2=r(\delta)$  \\[0.5ex]

\hline\vskip2pt $r(1)=11$   & \vskip2pt None can be represented.                               & \vskip2pt $P_{139}(4,1)$   \\[0.5ex]

\hline\vskip2pt $r(2)=7$    & \vskip2pt None can be represented.                               & \vskip2pt $P_{139}(1,-1)$  \\[0.5ex]

\hline\vskip2pt $r(3)=13$   & \vskip2pt None can be represented.                               & \vskip2pt $P_{139}(6,-1)$  \\[0.5ex]

\hline\vskip2pt $r(4)=29$   & \vskip2pt $14=2P_{139}(1,-1)$                                    & \vskip2pt $P_{139}(11,-1)$ \\[0.5ex]
                            &           $19=2P_{139}(1,-1)+P_{139}(5,0)$                       &                            \\[0.5ex]
                            &           $24=2P_{139}(1,-1)+2P_{139}(5,0)$                      &                            \\[0.5ex]
                            &           Others cannot be represented.                          &                            \\[0.5ex]
\hline
\end{tabular}
\end{center}}

Therefore, by Lemma \ref{Lem4}, we have $\mathfrak{S}_{139}(1)=\big\{\mathcal{O}v_1^{r_1}:r_1\geq1\big\}\bigcup\big[\bigcup^3_{j=2}\big\{\mathfrak{U}_jv_j^{r_j}:r_j\geq5\,\,\text{and}\,\,r_j\neq6,8,9\big\}\big]$ and $g_{139}(1)=4$.

\vskip8pt\noindent{\bf Case 7.} $E=\mathbb{Q}\big(\sqrt{-211}\big)$ with $\mathcal{O}=\mathbb{Z}+\mathbb{Z}\omega$ for $\omega=\frac{1+\sqrt{-211}}{2}$.

In this case, one has $m_{211}=4$, $k=5$, $n=3$ and
\begin{equation*}
h\big(v_2^{r_2}\big)=\frac{r_2}{5}=\sum_{\ell=1}^mN\Big(\frac{\gamma_\ell}{5}\Big)
=\sum_{\ell=1}^m\bigg(\frac{a_\ell^2}{25}+\frac{a_\ell b_\ell}{25}+\frac{53b_\ell^2}{25}\bigg)=:\sum_{\ell=1}^mP_{211}(a_\ell,b_\ell),
\end{equation*}
where $a_\ell$ and $b_\ell$ are integers satisfying $5|(a_\ell+2b_\ell)$.
The numbers $r$, with $r<53$ and $r=4^a(8b+7)$ for $a,b\in\mathbb{Z}_{\geq0}$, are $7,15,23,28,31,39,47$, with representation details given in the link mentioned earlier; others are given in the table below
{\small\begin{center}
\setlength{\arrayrulewidth}{0.2mm}
\setlength{\tabcolsep}{2pt}
\renewcommand{\arraystretch}{0.81}
\begin{tabular}{|m{1.37cm}|m{4.35cm}|m{1.81cm}|}
\hline\vskip2pt $r(\delta)$ & \vskip2pt $r_2<r(\delta)$ and $r_2\equiv\delta~(\mathrm{mod}~5)$ & \vskip2pt $r_2=r(\delta)$  \\[0.5ex]

\hline\vskip2pt $r(1)=11$   & \vskip2pt None can be represented.                               & \vskip2pt $P_{211}(2,-1)$  \\[0.5ex]

\hline\vskip2pt $r(2)=37$   & \vskip2pt $22=2P_{211}(2,-1)$                                    & \vskip2pt $P_{211}(12,-1)$ \\[0.5ex]
                            &           $27=2P_{211}(2,-1)+P_{211}(5,0)$                       &                            \\[0.5ex]
                            &           $32=2P_{211}(2,-1)+2P_{211}(5,0)$                      &                            \\[0.5ex]
                            &           Others cannot be represented.                          &                            \\[0.5ex]

\hline\vskip2pt $r(3)=13$   & \vskip2pt None can be represented.                               & \vskip2pt $P_{211}(3,1)$   \\[0.5ex]

\hline\vskip2pt $r(4)=19$   & \vskip2pt None can be represented.                               & \vskip2pt $P_{211}(7,-1)$  \\[0.5ex]
\hline
\end{tabular}
\end{center}}

Therefore, by Lemma \ref{Lem4}, we have $\mathfrak{S}_{211}(1)=\big\{\mathcal{O}v_1^{r_1}:r_1\geq1\big\}\bigcup\big[\bigcup^3_{j=2}\big\{\mathfrak{U}_jv_j^{r_j}:r_j\geq5\,\,\text{and}\,\,r_j\neq6,7,8,9,12,14,17\big\}\big]$ and $g_{211}(1)=4$.

\vskip8pt\noindent{\bf Case 8.} $E=\mathbb{Q}\big(\sqrt{-283}\big)$ with $\mathcal{O}=\mathbb{Z}+\mathbb{Z}\omega$ for $\omega=\frac{1+\sqrt{-283}}{2}$.

In this case, one has $m_{283}=4$, $k=7$, $n=5$ and
\begin{equation*}
h\big(v_2^{r_2}\big)=\frac{r_2}{7}=\sum_{\ell=1}^mN\Big(\frac{\gamma_\ell}{7}\Big)
=\sum_{\ell=1}^m\bigg(\frac{a_\ell^2}{49}+\frac{a_\ell b_\ell}{49}+\frac{71b_\ell^2}{49}\bigg)=:\sum_{\ell=1}^mP_{283}(a_\ell,b_\ell),
\end{equation*}
where $a_\ell$ and $b_\ell$ are integers satisfying $7|(a_\ell+3b_\ell)$.
The numbers $r$, with $r<71$ and $r=4^a(8b+7)$ for $a,b\in\mathbb{Z}_{\geq0}$, are $7,15,23,28,31,39,47,55,60,63$, whose detailed representations are given as before; others are given in the table below
{\small\begin{center}
\setlength{\arrayrulewidth}{0.2mm}
\setlength{\tabcolsep}{2pt}
\renewcommand{\arraystretch}{0.81}
\begin{tabular}{|m{1.37cm}|m{7.76cm}|m{1.81cm}|}
\hline\vskip2pt $r(\delta)$ & \vskip2pt $r_2<r(\delta)$ and $r_2\equiv\delta~(\mathrm{mod}~7)$      & \vskip2pt $r_2=r(\delta)$  \\[0.5ex]

\hline\vskip2pt $r(1)=29$   & \vskip2pt $22=2P_{283}(3,-1)$                                         & \vskip2pt $P_{283}(11,1)$  \\[0.5ex]
                            &           Others cannot be represented.                               &                            \\[0.5ex]

\hline\vskip2pt $r(2)=23$   & \vskip2pt None can be represented.                                    & \vskip2pt $P_{283}(10,-1)$ \\[0.5ex]

\hline\vskip2pt $r(3)=52$   & \vskip2pt $24=P_{283}(3,-1)+P_{283}(4,1)$                             & \vskip2pt $P_{283}(8,2)$   \\[0.5ex]
                            &           $31=P_{283}(3,-1)+P_{283}(4,1)+P_{283}(7,0)$                &                            \\[0.5ex]
                            &           $38=P_{283}(3,-1)+P_{283}(4,1)+2P_{283}(7,0)$               &                            \\[0.5ex]
                            &           $45=2P_{283}(3,-1)+P_{283}(10,-1)$                          &                            \\[0.5ex]
                            &           Others cannot be represented.                               &                            \\[0.5ex]

\hline\vskip2pt $r(4)=11$   & \vskip2pt None can be represented.                                    & \vskip2pt $P_{283}(3,-1)$  \\[0.5ex]

\hline\vskip2pt $r(5)=61$   & \vskip2pt $26=2P_{283}(4,1)$                                          & \vskip2pt $P_{283}(13,-2)$ \\[0.5ex]
                            &           $33=2P_{283}(4,1)+P_{283}(7,0)$                             &                            \\[0.5ex]
                            &           $40=2P_{283}(4,1)+2P_{283}(7,0)$                            &                            \\[0.5ex]
                            &           $47=P_{283}(3,-1)+P_{283}(4,1)+P_{283}(10,-1)$              &                            \\[0.5ex]
                            &           $54=P_{283}(3,-1)+P_{283}(4,1)+P_{283}(7,0)+P_{283}(10,-1)$ &                            \\[0.5ex]
                            &           Others cannot be represented.                               &                            \\[0.5ex]

\hline\vskip2pt $r(6)=13$   & \vskip2pt None can be represented.                                    & \vskip2pt $P_{283}(4,1)$   \\[0.5ex]
\hline
\end{tabular}
\end{center}}

Therefore, by Lemma \ref{Lem4}, we have $\mathfrak{S}_{283}(1)=\big\{\mathcal{O}v_1^{r_1}:r_1\geq1\big\}
\bigcup\big[\bigcup^3_{j=2}\big\{\mathfrak{U}_jv_j^{r_j}:r_j\geq7\,\,\text{and}\,\,r_j\neq8,9,10,12,15,16,17,19\big\}\big]$ and $g_{283}(1)=4$.

\vskip8pt\noindent{\bf Case 9.} $E=\mathbb{Q}\big(\sqrt{-307}\big)$ with $\mathcal{O}=\mathbb{Z}+\mathbb{Z}\omega$ for $\omega=\frac{1+\sqrt{-307}}{2}$.

In this case, one has $m_{307}=4$, $k=7$, $n=1$ and
\begin{equation*}
h\big(v_2^{r_2}\big)=\frac{r_2}{7}=\sum_{\ell=1}^mN\Big(\frac{\gamma_\ell}{7}\Big)
=\sum_{\ell=1}^m\bigg(\frac{a_\ell^2}{49}+\frac{a_\ell b_\ell}{49}+\frac{77b_\ell^2}{49}\bigg)=:\sum_{\ell=1}^mP_{307}(a_\ell,b_\ell),
\end{equation*}
where $a_\ell$ and $b_\ell$ are integers satisfying $7|(a_\ell+b_\ell)$.
The numbers $r$, with $r<77$ and $r=4^a(8b+7)$ for $a,b\in\mathbb{Z}_{\geq0}$, are $7,15,23,28,31,39,47,55,60,63,71$, with detailed representations available as before; others are given in the table below
{\small\begin{center}
\setlength{\arrayrulewidth}{0.2mm}
\setlength{\tabcolsep}{2pt}
\renewcommand{\arraystretch}{0.81}
\begin{tabular}{|m{1.37cm}|m{6.04cm}|m{1.81cm}|}
\hline\vskip2pt $r(\delta)$ & \vskip2pt $r_2<r(\delta)$ and $r_2\equiv\delta~(\mathrm{mod}~7)$ & \vskip2pt $r_2=r(\delta)$ \\[0.5ex]

\hline\vskip2pt $r(1)=71$   & \vskip2pt $22=2P_{307}(1,-1)$                                    & \vskip2pt $P_{307}(20,1)$  \\[0.5ex]
                            &           $29=2P_{307}(1,-1)+P_{307}(7,0)$                       &                            \\[0.5ex]
                            &           $36=2P_{307}(1,-1)+2P_{307}(7,0)$                      &                            \\[0.5ex]
                            &           $43=P_{307}(6,1)+P_{307}(7,0)+P_{307}(8,-1)$           &                            \\[0.5ex]
                            &           $50=P_{307}(6,1)+2P_{307}(7,0)+P_{307}(8,-1)$          &                            \\[0.5ex]
                            &           $57=2P_{307}(1,-1)+P_{307}(7,0)+P_{307}(14,0)$         &                            \\[0.5ex]
                            &           $64=P_{307}(1,-1)+P_{307}(9,-2)$                       &                            \\[0.5ex]
                            &           Others cannot be represented.                          &                            \\[0.5ex]

\hline\vskip2pt $r(2)=37$   & \vskip2pt $30=P_{307}(1,-1)+P_{307}(8,-1)$                       & \vskip2pt $P_{307}(13,1)$  \\[0.5ex]
                            &           Others cannot be represented.                          &                            \\[0.5ex]

\hline\vskip2pt $r(3)=17$   & \vskip2pt None can be represented.                               & \vskip2pt $P_{307}(6,1)$   \\[0.5ex]

\hline\vskip2pt $r(4)=11$   & \vskip2pt None can be represented.                               & \vskip2pt $P_{307}(1,-1)$  \\[0.5ex]

\hline\vskip2pt $r(5)=19$   & \vskip2pt None can be represented.                               & \vskip2pt $P_{307}(8,-1)$  \\[0.5ex]
\hline
\end{tabular}
\end{center}}

\newpage\noindent{\color{blue}table continued}

{\small\begin{center}
\setlength{\arrayrulewidth}{0.2mm}
\setlength{\tabcolsep}{2pt}
\renewcommand{\arraystretch}{0.81}
\begin{tabular}{|m{1.37cm}|m{6.06cm}|m{1.81cm}|}
\hline\vskip2pt $r(6)=41$   & \vskip2pt $34=2P_{307}(6,1)$                                     & \vskip2pt $P_{307}(15,-1)$ \\[0.5ex]
                            &           Others cannot be represented.                          &                            \\[0.5ex]
\hline
\end{tabular}
\end{center}}

Therefore, by Lemma \ref{Lem4}, we have $\mathfrak{S}_{307}(1)=\big\{\mathcal{O}v_1^{r_1}:r_1\geq1\big\}
\bigcup\big[\bigcup^3_{j=2}\big\{\mathfrak{U}_jv_j^{r_j}:r_j\geq7\,\,\text{and}\,\,r_j\neq8,9,10,12,13,15,16,20,23,27\big\}\big]$ and $g_{307}(1)=4$.

\vskip8pt\noindent{\bf Case 10.} $E=\mathbb{Q}\big(\sqrt{-331}\big)$ with $\mathcal{O}=\mathbb{Z}+\mathbb{Z}\omega$ for $\omega=\frac{1+\sqrt{-331}}{2}$.

In this case, one has $m_{331}=4$, $k=5$, $n=3$ and
\begin{equation*}
h\big(v_2^{r_2}\big)=\frac{r_2}{5}=\sum_{\ell=1}^mN\Big(\frac{\gamma_\ell}{5}\Big)
=\sum_{\ell=1}^m\bigg(\frac{a_\ell^2}{25}+\frac{a_\ell b_\ell}{25}+\frac{83b_\ell^2}{25}\bigg)=:\sum_{\ell=1}^mP_{331}(a_\ell,b_\ell),
\end{equation*}
where $a_\ell$ and $b_\ell$ are integers satisfying $5|(a_\ell+2b_\ell)$.
For $r<83$ and $r=4^a(8b+7)$ with $a,b\in\mathbb{Z}_{\geq0}$, one has $r=7,15,23,28,31,39,47,55,60,63,71,79$, whose detailed representations are given as before; others are given in the table below
{\small\begin{center}
\setlength{\arrayrulewidth}{0.2mm}
\setlength{\tabcolsep}{2pt}
\renewcommand{\arraystretch}{0.81}
\begin{tabular}{|m{1.37cm}|m{4.03cm}|m{1.81cm}|}
\hline\vskip2pt $r(\delta)$ & \vskip2pt $r_2<r(\delta)$ and $r_2\equiv\delta~(\mathrm{mod}~5)$ & \vskip2pt $r_2=r(\delta)$  \\[0.5ex]

\hline\vskip2pt $r(1)=31$   & \vskip2pt None can be represented.                               & \vskip2pt $P_{331}(8,1)$   \\[0.5ex]

\hline\vskip2pt $r(2)=17$   & \vskip2pt None can be represented.                               & \vskip2pt $P_{331}(2,-1)$  \\[0.5ex]

\hline\vskip2pt $r(3)=43$   & \vskip2pt $38=2P_{331}(3,1)$                                     & \vskip2pt $P_{331}(12,-1)$ \\[0.5ex]
                            &           Others cannot be represented.                          &                            \\[0.5ex]

\hline\vskip2pt $r(4)=19$   & \vskip2pt None can be represented.                               & \vskip2pt $P_{331}(3,1)$   \\[0.5ex]
\hline
\end{tabular}
\end{center}}

So, by Lemma \ref{Lem4}, $\mathfrak{S}_{331}(1)=\big\{\mathcal{O}v_1^{r_1}:r_1\geq1\big\}
\bigcup\big[\bigcup^3_{j=2}\big\{\mathfrak{U}_jv_j^{r_j}:r_j\geq5\,\,\text{and}\,\,r_j\neq6,7,8,9,11,12,13,14,16,18,21,23,26,28,33\big\}\big]$ and $g_{331}(1)=4$.

\vskip8pt\noindent{\bf Case 11.} $E=\mathbb{Q}\big(\sqrt{-379}\big)$ with $\mathcal{O}=\mathbb{Z}+\mathbb{Z}\omega$ for $\omega=\frac{1+\sqrt{-379}}{2}$.

In this case, one has $m_{379}=4$, $k=5$, $n=1$ and
\begin{equation*}
h\big(v_2^{r_2}\big)=\frac{r_2}{5}=\sum_{\ell=1}^mN\Big(\frac{\gamma_\ell}{5}\Big)
=\sum_{\ell=1}^m\bigg(\frac{a_\ell^2}{25}+\frac{a_\ell b_\ell}{25}+\frac{95b_\ell^2}{25}\bigg)=:\sum_{\ell=1}^mP_{379}(a_\ell,b_\ell),
\end{equation*}
where $a_\ell$ and $b_\ell$ are integers satisfying $5|(a_\ell+b_\ell)$.
For $r<95$ and $r=4^a(8b+7)$ with $a,b\in\mathbb{Z}_{\geq0}$, $r=7,15,23,28,31,39,47,55,60,63,71,79,87,92$, whose detailed representations are given as before; others are given in the table below
{\small\begin{center}
\setlength{\arrayrulewidth}{0.2mm}
\setlength{\tabcolsep}{2pt}
\renewcommand{\arraystretch}{0.81}
\begin{tabular}{|m{1.37cm}|m{4.03cm}|m{1.81cm}|}
\hline\vskip2pt $r(\delta)$ & \vskip2pt $r_2<r(\delta)$ and $r_2\equiv\delta~(\mathrm{mod}~5)$ & \vskip2pt $r_2=r(\delta)$  \\[0.5ex]

\hline\vskip2pt $r(1)=41$   & \vskip2pt None can be represented.                               & \vskip2pt $P_{379}(11,-1)$ \\[0.5ex]

\hline\vskip2pt $r(2)=37$   & \vskip2pt None can be represented.                               & \vskip2pt $P_{379}(9,1)$   \\[0.5ex]

\hline\vskip2pt $r(3)=23$   & \vskip2pt None can be represented.                               & \vskip2pt $P_{379}(4,1)$   \\[0.5ex]

\hline\vskip2pt $r(4)=19$   & \vskip2pt None can be represented.                               & \vskip2pt $P_{379}(1,-1)$  \\[0.5ex]
\hline
\end{tabular}
\end{center}}

So, by Lemma \ref{Lem4}, $\mathfrak{S}_{379}(1)=\big\{\mathcal{O}v_1^{r_1}:r_1\geq1\big\}
\bigcup\big[\bigcup^3_{j=2}\big\{\mathfrak{U}_jv_j^{r_j}:r_j\geq5\,\,\text{and}\,\,r_j\neq6,7,8,9,11,12,13,14,16,17,18,21,22,26,27,31,32,36\big\}\big]$ and $g_{379}(1)=4$.

\vskip8pt\noindent{\bf Case 12.} $E=\mathbb{Q}\big(\sqrt{-499}\big)$ with $\mathcal{O}=\mathbb{Z}+\mathbb{Z}\omega$ for $\omega=\frac{1+\sqrt{-499}}{2}$.

In this case, one has $m_{499}=4$, $k=5$, $n=1$ and
\begin{equation*}
h\big(v_2^{r_2}\big)=\frac{r_2}{5}=\sum_{\ell=1}^mN\Big(\frac{\gamma_\ell}{5}\Big)
=\sum_{\ell=1}^m\bigg(\frac{a_\ell^2}{25}+\frac{a_\ell b_\ell}{25}+\frac{125b_\ell^2}{25}\bigg)=:\sum_{\ell=1}^mP_{499}(a_\ell,b_\ell),
\end{equation*}
where $a_\ell$ and $b_\ell$ are integers satisfying $5|(a_\ell+b_\ell)$.
For $r<125$ and $r=4^a(8b+7)$ with $a,b\in\mathbb{Z}_{\geq0}$, one has $r=7,15,23,28,31,39,47,55,60,63,71,79,87,92,95,103,$
$111,112,119,124$, whose detailed representations are given as before; others are given in the table below
{\small\begin{center}
\setlength{\arrayrulewidth}{0.2mm}
\setlength{\tabcolsep}{2pt}
\renewcommand{\arraystretch}{0.81}
\begin{tabular}{|m{1.37cm}|m{4.03cm}|m{1.81cm}|}
\hline\vskip2pt $r(\delta)$ & \vskip2pt $r_2<r(\delta)$ and $r_2\equiv\delta~(\mathrm{mod}~5)$ & \vskip2pt $r_2=r(\delta)$  \\[0.5ex]

\hline\vskip2pt $r(1)=31$   & \vskip2pt None can be represented.                               & \vskip2pt $P_{499}(6,-1)$  \\[0.5ex]

\hline\vskip2pt $r(2)=47$   & \vskip2pt None can be represented.                               & \vskip2pt $P_{499}(11,-1)$ \\[0.5ex]

\hline\vskip2pt $r(3)=43$   & \vskip2pt None can be represented.                               & \vskip2pt $P_{499}(9,1)$   \\[0.5ex]

\hline\vskip2pt $r(4)=29$   & \vskip2pt None can be represented.                               & \vskip2pt $P_{499}(4,1)$   \\[0.5ex]
\hline
\end{tabular}
\end{center}}

So, by Lemma \ref{Lem4}, $\mathfrak{S}_{499}(1)=\big\{\mathcal{O}v_1^{r_1}:r_1\geq1\big\}\bigcup
\big[\bigcup^3_{j=2}\big\{\mathfrak{U}_jv_j^{r_j}:r_j\geq5\,\,\text{and}\,\,r_j\neq6,7,8,9,11,12,13,14,16,17,18,19,21,22,23,24,26,27,28,32,33,37,38,42\big\}\big]$ and $g_{499}(1)=4$.

\vskip8pt\noindent{\bf Case 13.} $E=\mathbb{Q}\big(\sqrt{-547}\big)$ with $\mathcal{O}=\mathbb{Z}+\mathbb{Z}\omega$ for $\omega=\frac{1+\sqrt{-547}}{2}$.

In this case, one has $m_{547}=4$, $k=11$, $n=5$ and
\begin{equation*}
h\big(v_2^{r_2}\big)=\frac{r_2}{11}=\sum_{\ell=1}^mN\Big(\frac{\gamma_\ell}{11}\Big)
=\sum_{\ell=1}^m\bigg(\frac{a_\ell^2}{121}+\frac{a_\ell b_\ell}{121}+\frac{137b_\ell^2}{121}\bigg)=:\sum_{\ell=1}^mP_{547}(a_\ell,b_\ell),
\end{equation*}
where $a_\ell$ and $b_\ell$ are integers satisfying $11|(a_\ell+3b_\ell)$.
For $r<137$ and $r=4^a(8b+7)$ with $a,b\in\mathbb{Z}_{\geq0}$, one has $r=7,15,23,28,31,39,47,55,60,63,71,79,87,92,95,103,$
$111,112,119,124,127,135$, whose detailed representations are given as before; others are given in the table below
{\small\begin{center}
\setlength{\arrayrulewidth}{0.2mm}
\setlength{\tabcolsep}{2pt}
\renewcommand{\arraystretch}{0.81}
\begin{tabular}{|m{1.61cm}|m{8.1cm}|m{1.81cm}|}
\hline\vskip2pt $r(\delta)$ & \vskip2pt $r_2<r(\delta)$ and $r_2\equiv\delta~(\mathrm{mod}~11)$       & \vskip2pt $r_2=r(\delta)$  \\[0.5ex]

\hline\vskip2pt $r(1)=67$   & \vskip2pt \,\,\,$45=2P_{547}(3,-1)+P_{547}(8,1)$                        & \vskip2pt $P_{547}(25,-1)$ \\[0.5ex]
                            &           \,\,\,$56=2P_{547}(3,-1)+P_{547}(8,1)+P_{547}(11,0)$          &                            \\[0.5ex]
                            &           Others cannot be represented.                                 &                            \\[0.5ex]

\hline\vskip2pt $r(2)=13$   & \vskip2pt None can be represented.                                      & \vskip2pt $P_{547}(3,-1)$  \\[0.5ex]

\hline\vskip2pt $r(3)=47$   & \vskip2pt None can be represented.                                      & \vskip2pt $P_{547}(19,1)$  \\[0.5ex]

\hline\vskip2pt $r(4)=169$  & \vskip2pt \,\,\,$26=2P_{547}(3,-1)$                                     & \vskip2pt $P_{547}(41,1)$  \\[0.5ex]
                            &           \,\,\,$37=2P_{547}(3,-1)+P_{547}(11,0)$                       &                            \\[0.5ex]
                            &           \,\,\,$48=2P_{547}(3,-1)+2P_{547}(11,0)$                      &                            \\[0.5ex]
                            &           \,\,\,$59=P_{547}(8,1)+P_{547}(11,0)+P_{547}(14,-1)$          &                            \\[0.5ex]
                            &           \,\,\,$70=P_{547}(8,1)+2P_{547}(11,0)+P_{547}(14,-1)$         &                            \\[0.5ex]
                            &           \,\,\,$81=P_{547}(6,-2)+P_{547}(14,-1)$                       &                            \\[0.5ex]

\hline
\end{tabular}
\end{center}}

\newpage\noindent{\color{blue}table continued}

{\small\begin{center}
\setlength{\arrayrulewidth}{0.2mm}
\setlength{\tabcolsep}{2pt}
\renewcommand{\arraystretch}{0.81}
\begin{tabular}{|m{1.61cm}|m{8.1cm}|m{1.81cm}|}
\hline\vskip2pt $r(4)=169$  & \vskip2pt \,\,\,$92=P_{547}(6,-2)+P_{547}(11,0)+P_{547}(14,-1)$         &                            \\[0.5ex]
{\color{blue}(continued)}   &           $103=P_{547}(6,-2)+2P_{547}(11,0)+P_{547}(14,-1)$             &                            \\[0.5ex]
                            &           $114=P_{547}(8,1)+2P_{547}(11,0)+P_{547}(17,-2)$              &                            \\[0.5ex]
                            &           $125=P_{547}(6,-2)+P_{547}(14,-1)+P_{547}(22,0)$              &                            \\[0.5ex]
                            &           $136=P_{547}(8,1)+P_{547}(17,-2)+P_{547}(22,0)$               &                            \\[0.5ex]
                            &           $147=P_{547}(8,1)+P_{547}(9,-3)+P_{547}(11,0)$                &                            \\[0.5ex]
                            &           $158=P_{547}(8,1)+P_{547}(9,-3)+2P_{547}(11,0)$               &                            \\[0.5ex]
                            &           Others cannot be represented.                                 &                            \\[0.5ex]

\hline\vskip2pt $r(5)=181$  & \vskip2pt \,\,\,$38=2P_{547}(8,1)$                                      & \vskip2pt $P_{547}(39,-2)$ \\[0.5ex]
                            &           \,\,\,$49=2P_{547}(8,1)+P_{547}(11,0)$                        &                            \\[0.5ex]
                            &           \,\,\,$60=2P_{547}(8,1)+2P_{547}(11,0)$                       &                            \\[0.5ex]
                            &           \,\,\,$71=P_{547}(6,-2)+P_{547}(8,1)$                         &                            \\[0.5ex]
                            &           \,\,\,$82=P_{547}(6,-2)+P_{547}(8,1)+P_{547}(11,0)$           &                            \\[0.5ex]
                            &           \,\,\,$93=P_{547}(6,-2)+P_{547}(8,1)+2P_{547}(11,0)$          &                            \\[0.5ex]
                            &           $104=P_{547}(5,2)+2P_{547}(11,0)+P_{547}(14,-1)$              &                            \\[0.5ex]
                            &           $115=P_{547}(6,-2)+P_{547}(8,1)+P_{547}(22,0)$                &                            \\[0.5ex]
                            &           $126=P_{547}(5,2)+P_{547}(14,-1)+P_{547}(22,0)$               &                            \\[0.5ex]
                            &           $137=2P_{547}(8,1)+P_{547}(33,0)$                             &                            \\[0.5ex]
                            &           $148=2P_{547}(8,1)+P_{547}(11,0)+P_{547}(33,0)$               &                            \\[0.5ex]
                            &           $159=P_{547}(6,-2)+P_{547}(8,1)+2P_{547}(22,0)$               &                            \\[0.5ex]
                            &           $170=P_{547}(5,2)+P_{547}(14,-1)+2P_{547}(22,0)$              &                            \\[0.5ex]
                            &           Others cannot be represented.                                 &                            \\[0.5ex]

\hline\vskip2pt $r(6)=116$  & \vskip2pt \,\,\,$39=3P_{547}(3,-1)$                                     & \vskip2pt $P_{547}(28,-2)$ \\[0.5ex]
                            &           \,\,\,$50=3P_{547}(3,-1)+P_{547}(11,0)$                       &                            \\[0.5ex]
                            &           \,\,\,$61=P_{547}(3,-1)+P_{547}(8,1)+P_{547}(14,-1)$          &                            \\[0.5ex]
                            &           \,\,\,$72=P_{547}(5,2)+P_{547}(8,1)$                          &                            \\[0.5ex]
                            &           \,\,\,$83=P_{547}(5,2)+P_{547}(8,1)+P_{547}(11,0)$            &                            \\[0.5ex]
                            &           \,\,\,$94=P_{547}(5,2)+P_{547}(8,1)+2P_{547}(11,0)$           &                            \\[0.5ex]
                            &           $105=P_{547}(3,-1)+P_{547}(8,1)+P_{547}(14,-1)+P_{547}(22,0)$ &                            \\[0.5ex]
                            &           Others cannot be represented.                                 &                            \\[0.5ex]

\hline\vskip2pt $r(7)=29$   & \vskip2pt None can be represented.                                      & \vskip2pt $P_{547}(14,-1)$ \\[0.5ex]

\hline\vskip2pt $r(8)=19$   & \vskip2pt None can be represented.                                      & \vskip2pt $P_{547}(8,1)$   \\[0.5ex]

\hline\vskip2pt $r(9)=53$   & \vskip2pt \,\,\,$42=P_{547}(3,-1)+P_{547}(14,-1)$                       & \vskip2pt $P_{547}(5,2)$   \\[0.5ex]
                            &           Others cannot be represented.                                 &                            \\[0.5ex]

\hline\vskip2pt $r(10)=76$  & \vskip2pt \,\,\,$32=P_{547}(3,-1)+P_{547}(8,1)$                         & \vskip2pt $P_{547}(16,2)$  \\[0.5ex]
                            &           \,\,\,$43=P_{547}(3,-1)+P_{547}(8,1)+P_{547}(11,0)$           &                            \\[0.5ex]
                            &           \,\,\,$54=P_{547}(3,-1)+P_{547}(8,1)+2P_{547}(11,0)$          &                            \\[0.5ex]
                            &           \,\,\,$65=P_{547}(3,-1)+P_{547}(6,-2)$                        &                            \\[0.5ex]
                            &           Others cannot be represented.                                 &                            \\[0.5ex]
\hline
\end{tabular}
\end{center}}

So, by Lemma \ref{Lem4}, $\mathfrak{S}_{547}(1)=\big\{\mathcal{O}v_1^{r_1}:r_1\geq1\big\}
\bigcup\big[\bigcup^3_{j=2}\big\{\mathfrak{U}_jv_j^{r_j}:r_j\geq11\,\,\text{and}\,\,r_j\neq12,14,15,16,17,18,20,21,23,25,27,28,31,34,36\big\}\big]$ and $g_{547}(1)=4$.

\vskip8pt\noindent{\bf Case 14.} $E=\mathbb{Q}\big(\sqrt{-643}\big)$ with $\mathcal{O}=\mathbb{Z}+\mathbb{Z}\omega$ for $\omega=\frac{1+\sqrt{-643}}{2}$.

In this case, one has $m_{643}=4$, $k=7$, $n=1$ and
\begin{equation*}
h\big(v_2^{r_2}\big)=\frac{r_2}{7}=\sum_{\ell=1}^mN\Big(\frac{\gamma_\ell}{7}\Big)
=\sum_{\ell=1}^m\bigg(\frac{a_\ell^2}{49}+\frac{a_\ell b_\ell}{49}+\frac{161b_\ell^2}{49}\bigg)=:\sum_{\ell=1}^mP_{643}(a_\ell,b_\ell),
\end{equation*}
where $a_\ell$ and $b_\ell$ are integers satisfying $7|(a_\ell+b_\ell)$.
For $r<161$ and $r=4^a(8b+7)$ with $a,b\in\mathbb{Z}_{\geq0}$, one has $r=7,15,23,28,31,39,47,55,60,63,71,79,87,92,95,103,$
$111,112,119,124,127,135,143,151,156,159$, with detailed representations given as before; others are given in the table below
{\small\begin{center}
\setlength{\arrayrulewidth}{0.2mm}
\setlength{\tabcolsep}{2pt}
\renewcommand{\arraystretch}{0.81}
\begin{tabular}{|m{1.37cm}|m{6.1cm}|m{1.81cm}|}
\hline\vskip2pt $r(\delta)$ & \vskip2pt $r_2<r(\delta)$ and $r_2\equiv\delta~(\mathrm{mod}~7)$ & \vskip2pt $r_2=r(\delta)$  \\[0.5ex]

\hline\vskip2pt $r(1)=29$   & \vskip2pt None can be represented.                               & \vskip2pt $P_{643}(6,1)$   \\[0.5ex]

\hline\vskip2pt $r(2)=23$   & \vskip2pt None can be represented.                               & \vskip2pt $P_{643}(1,-1)$  \\[0.5ex]

\hline\vskip2pt $r(3)=31$   & \vskip2pt None can be represented.                               & \vskip2pt $P_{643}(8,-1)$  \\[0.5ex]

\hline\vskip2pt $r(4)=53$   & \vskip2pt $46=2P_{643}(1,-1)$                                    & \vskip2pt $P_{643}(15,-1)$ \\[0.5ex]

\hline\vskip2pt $r(5)=89$   & \vskip2pt $54=P_{643}(1,-1)+P_{643}(8,-1)$                       & \vskip2pt $P_{643}(22,-1)$ \\[0.5ex]
                            &           $61=P_{643}(1,-1)+P_{643}(7,0)+P_{643}(8,-1)$          &                            \\[0.5ex]
                            &           $68=P_{643}(1,-1)+2P_{643}(7,0)+P_{643}(8,-1)$         &                            \\[0.5ex]
                            &           $75=2P_{643}(1,-1)+P_{643}(6,1)$                       &                            \\[0.5ex]
                            &           $82=P_{643}(6,1)+P_{643}(15,-1)$                       &                            \\[0.5ex]
                            &           Others cannot be represented.                          &                            \\[0.5ex]

\hline\vskip2pt $r(6)=83$   & \vskip2pt $62=2P_{643}(8,-1)$                                    & \vskip2pt $P_{643}(20,1)$  \\[0.5ex]
                            &           $69=P_{643}(7,0)+2P_{643}(8,-1)$                       &                            \\[0.5ex]
                            &           $76=2P_{643}(7,0)+2P_{643}(8,-1)$                      &                            \\[0.5ex]
                            &           Others cannot be represented.                          &                            \\[0.5ex]
\hline
\end{tabular}
\end{center}}

So, by Lemma \ref{Lem4}, $\mathfrak{S}_{643}(1)=\big\{\mathcal{O}v_1^{r_1}:r_1\geq1\big\}
\bigcup\big[\bigcup^3_{j=2}\big\{\mathfrak{U}_jv_j^{r_j}:r_j\geq7\,\,\text{and}\,\,r_j\neq8,9,10,11,12,13,15,16,17,18,19,20,22,24,25,26,27,32,33,34,39,40,41,47,48,$
$55\big\}\big]$ and $g_{643}(1)=4$.

\vskip8pt\noindent{\bf Case 15.} $E=\mathbb{Q}\big(\sqrt{-883}\big)$ with $\mathcal{O}=\mathbb{Z}+\mathbb{Z}\omega$ for $\omega=\frac{1+\sqrt{-883}}{2}$.

In this case, one has $m_{883}=4$, $k=13$, $n=1$ and
\begin{equation*}
h\big(v_2^{r_2}\big)=\frac{r_2}{13}=\sum_{\ell=1}^mN\Big(\frac{\gamma_\ell}{13}\Big)
=\sum_{\ell=1}^m\bigg(\frac{a_\ell^2}{169}+\frac{a_\ell b_\ell}{169}+\frac{221b_\ell^2}{169}\bigg)=:\sum_{\ell=1}^mP_{883}(a_\ell,b_\ell),
\end{equation*}
where $a_\ell$ and $b_\ell$ are integers satisfying $13|(a_\ell+b_\ell)$.
For $r<221$ and $r=4^a(8b+7)$ with $a,b\in\mathbb{Z}_{\geq0}$, one has $r=7,15,23,28,31,39,47,55,60,63,71,79,87,92,95,103,$
$111,112,119,124,127,135,143,151,156,159,167,175,183,188,191,199,207,215,$
$220$, with detailed representations given as before; others are given in the table below
{\small\begin{center}
\setlength{\arrayrulewidth}{0.2mm}
\setlength{\tabcolsep}{2pt}
\renewcommand{\arraystretch}{0.81}
\begin{tabular}{|m{1.7cm}|m{8.44cm}|m{1.81cm}|}
\hline\vskip2pt $r(\delta)$ & \vskip2pt $r_2<r(\delta)$ and $r_2\equiv\delta~(\mathrm{mod}~13)$             & \vskip2pt $r_2=r(\delta)$  \\[0.5ex]

\hline\vskip2pt $r(1)=79$   & \vskip2pt None can be represented.                                            & \vskip2pt $P_{883}(11,2)$  \\[0.5ex]

\hline\vskip2pt $r(2)=67$   & \vskip2pt None can be represented.                                            & \vskip2pt $P_{883}(25,1)$  \\[0.5ex]
\hline
\end{tabular}
\end{center}}

\newpage\noindent{\color{blue}table continued}

{\small\begin{center}
\setlength{\arrayrulewidth}{0.2mm}
\setlength{\tabcolsep}{2pt}
\renewcommand{\arraystretch}{0.81}
\begin{tabular}{|m{1.7cm}|m{8.44cm}|m{1.81cm}|}
\hline\vskip2pt $r(3)=29$   & \vskip2pt None can be represented.                                            & \vskip2pt $P_{883}(12,1)$  \\[0.5ex]

\hline\vskip2pt $r(4)=17$   & \vskip2pt None can be represented.                                            & \vskip2pt $P_{883}(1,-1)$  \\[0.5ex]

\hline\vskip2pt $r(5)=31$   & \vskip2pt None can be represented.                                            & \vskip2pt $P_{883}(14,-1)$ \\[0.5ex]

\hline\vskip2pt $r(6)=279$  & \vskip2pt \,\,\,$58=2P_{883}(12,1)$                                           & \vskip2pt $P_{883}(42,-3)$ \\[0.5ex]
                            &           \,\,\,$71=2P_{883}(12,1)+P_{883}(13,0)$                             &                            \\[0.5ex]
                            &           \,\,\,$84=2P_{883}(12,1)+2P_{883}(13,0)$                            &                            \\[0.5ex]
                            &           \,\,\,$97=2P_{883}(13,0)+P_{883}(27,-1)$                            &                            \\[0.5ex]
                            &           $110=2P_{883}(12,1)+P_{883}(26,0)$                                  &                            \\[0.5ex]
                            &           $123=P_{883}(26,0)+P_{883}(27,-1)$                                  &                            \\[0.5ex]
                            &           $136=P_{883}(13,0)+P_{883}(26,0)+P_{883}(27,-1)$                    &                            \\[0.5ex]
                            &           $149=2P_{883}(13,0)+P_{883}(26,0)+P_{883}(27,-1)$                   &                            \\[0.5ex]
                            &           $162=2P_{883}(12,1)+2P_{883}(26,0)$                                 &                            \\[0.5ex]
                            &           $175=2P_{883}(12,1)+P_{883}(39,0)$                                  &                            \\[0.5ex]
                            &           $188=2P_{883}(12,1)+P_{883}(13,0)+P_{883}(39,0)$                    &                            \\[0.5ex]
                            &           $201=P_{883}(13,0)+P_{883}(27,-1)+P_{883}(39,0)$                    &                            \\[0.5ex]
                            &           $214=2P_{883}(13,0)+P_{883}(27,-1)+P_{883}(39,0)$                   &                            \\[0.5ex]
                            &           $227=3P_{883}(26,0)+P_{883}(27,-1)$                                 &                            \\[0.5ex]
                            &           $240=P_{883}(26,0)+P_{883}(27,-1)+P_{883}(39,0)$                    &                            \\[0.5ex]
                            &           $253=P_{883}(13,0)+P_{883}(26,0)+P_{883}(27,-1)+P_{883}(39,0)$      &                            \\[0.5ex]
                            &           $266=2P_{883}(12,1)+P_{883}(52,0)$                                  &                            \\[0.5ex]
                            &           Others cannot be represented.                                       &                            \\[0.5ex]

\hline\vskip2pt $r(7)=124$  & \vskip2pt \,\,\,$46=P_{883}(1,-1)+P_{883}(12,1)$                              & \vskip2pt $P_{883}(28,-2)$ \\[0.5ex]
                            &           \,\,\,$59=P_{883}(1,-1)+P_{883}(12,1)+P_{883}(13,0)$                &                            \\[0.5ex]
                            &           \,\,\,$72=P_{883}(1,-1)+P_{883}(12,1)+2P_{883}(13,0)$               &                            \\[0.5ex]
                            &           \,\,\,$85=P_{883}(1,-1)+P_{883}(2,-2)$                              &                            \\[0.5ex]
                            &           \,\,\,$98=P_{883}(1,-1)+P_{883}(2,-2)+P_{883}(13,0)$                &                            \\[0.5ex]
                            &           $111=P_{883}(1,-1)+P_{883}(2,-2)+2P_{883}(13,0)$                    &                            \\[0.5ex]
                            &           Others cannot be represented.                                       &                            \\[0.5ex]

\hline\vskip2pt $r(8)=229$  & \vskip2pt \,\,\,$34=2P_{883}(1,-1)$                                           & \vskip2pt $P_{883}(53,-1)$ \\[0.5ex]
                            &           \,\,\,$47=2P_{883}(1,-1)+P_{883}(13,0)$                             &                            \\[0.5ex]
                            &           \,\,\,$60=2P_{883}(1,-1)+2P_{883}(13,0)$                            &                            \\[0.5ex]
                            &           \,\,\,$73=P_{883}(12,1)+P_{883}(13,0)+P_{883}(14,-1)$               &                            \\[0.5ex]
                            &           \,\,\,$86=2P_{883}(1,-1)+P_{883}(26,0)$                             &                            \\[0.5ex]
                            &           \,\,\,$99=2P_{883}(1,-1)+P_{883}(13,0)+P_{883}(26,0)$               &                            \\[0.5ex]
                            &           $112=P_{883}(12,1)+P_{883}(14,-1)+P_{883}(26,0)$                    &                            \\[0.5ex]
                            &           $125=P_{883}(12,1)+P_{883}(14,-1)+P_{883}(13,0)+P_{883}(26,0)$      &                            \\[0.5ex]
                            &           $138=2P_{883}(1,-1)+2P_{883}(26,0)$                                 &                            \\[0.5ex]
                            &           $151=2P_{883}(1,-1)+P_{883}(39,0)$                                  &                            \\[0.5ex]
                            &           $164=2P_{883}(1,-1)+P_{883}(13,0)+P_{883}(39,0)$                    &                            \\[0.5ex]
                            &           $177=P_{883}(11,2)+P_{883}(14,-1)+P_{883}(25,1)$                    &                            \\[0.5ex]
                            &           $190=P_{883}(25,1)+P_{883}(26,0)+P_{883}(27,-1)$                    &                            \\[0.5ex]
                            &           $203=P_{883}(11,2)+P_{883}(28,-2)$                                  &                            \\[0.5ex]
                            &           $216=P_{883}(11,2)+P_{883}(13,0)+P_{883}(28,-2)$                    &                            \\[0.5ex]
                            &           Others cannot be represented.                                       &                            \\[0.5ex]
\hline
\end{tabular}
\end{center}}

\newpage\noindent{\color{blue}table continued}

{\small\begin{center}
\setlength{\arrayrulewidth}{0.2mm}
\setlength{\tabcolsep}{2pt}
\renewcommand{\arraystretch}{0.81}
\begin{tabular}{|m{1.7cm}|m{8.44cm}|m{1.81cm}|}
\hline\vskip2pt $r(9)=191$  & \vskip2pt \,\,\,$48=P_{883}(1,-1)+P_{883}(14,-1)$                             & \vskip2pt $P_{883}(41,-2)$ \\[0.5ex]
                            &           \,\,\,$61=P_{883}(1,-1)+P_{883}(13,0)+P_{883}(14,-1)$               &                            \\[0.5ex]
                            &           \,\,\,$74=P_{883}(1,-1)+2P_{883}(13,0)+P_{883}(14,-1)$              &                            \\[0.5ex]
                            &           \,\,\,$87=3P_{883}(12,1)$                                           &                            \\[0.5ex]
                            &           $100=P_{883}(12,1)+P_{883}(27,-1)$                                  &                            \\[0.5ex]
                            &           $113=P_{883}(12,1)+P_{883}(13,0)+P_{883}(27,-1)$                    &                            \\[0.5ex]
                            &           $126=P_{883}(12,1)+2P_{883}(13,0)+P_{883}(27,-1)$                   &                            \\[0.5ex]
                            &           $139=3P_{883}(12,1)+P_{883}(26,0)$                                  &                            \\[0.5ex]
                            &           $152=P_{883}(12,1)+P_{883}(26,0)+P_{883}(27,-1)$                    &                            \\[0.5ex]
                            &           $165=P_{883}(1,-1)+P_{883}(14,-1)+P_{883}(39,0)$                    &                            \\[0.5ex]
                            &           $178=P_{883}(1,-1)+P_{883}(13,0)+P_{883}(14,-1)+P_{883}(39,0)$      &                            \\[0.5ex]
                            &           Others cannot be represented.                                       &                            \\[0.5ex]

\hline\vskip2pt $r(10)=153$ & \vskip2pt \,\,\,$62=2P_{883}(14,-1)$                                          & \vskip2pt $P_{883}(3,-3)$  \\[0.5ex]
                            &           \,\,\,$75=P_{883}(13,0)+2P_{883}(14,-1)$                            &                            \\[0.5ex]
                            &           \,\,\,$88=2P_{883}(13,0)+2P_{883}(14,-1)$                           &                            \\[0.5ex]
                            &           $101=P_{883}(1,-1)+P_{883}(13,0)+P_{883}(27,-1)$                    &                            \\[0.5ex]
                            &           $114=2P_{883}(14,-1)+P_{883}(26,0)$                                 &                            \\[0.5ex]
                            &           $127=P_{883}(13,0)+2P_{883}(14,-1)+P_{883}(26,0)$                   &                            \\[0.5ex]
                            &           $140=P_{883}(1,-1)+P_{883}(26,0)+P_{883}(27,-1)$                    &                            \\[0.5ex]
                            &           Others cannot be represented.                                       &                            \\[0.5ex]

\hline\vskip2pt $r(11)=284$ & \vskip2pt \,\,\,$63=2P_{883}(1,-1)+P_{883}(12,1)$                             & \vskip2pt $P_{883}(54,-2)$ \\[0.5ex]
                            &           \,\,\,$76=2P_{883}(1,-1)+P_{883}(12,1)+P_{883}(13,0)$               &                            \\[0.5ex]
                            &           \,\,\,$89=2P_{883}(12,1)+P_{883}(14,-1)$                            &                            \\[0.5ex]
                            &           $102=2P_{883}(12,1)+P_{883}(13,0)+P_{883}(14,-1)$                   &                            \\[0.5ex]
                            &           $115=P_{883}(13,0)+P_{883}(14,-1)+P_{883}(27,-1)$                   &                            \\[0.5ex]
                            &           $128=2P_{883}(13,0)+P_{883}(14,-1)+P_{883}(27,-1)$                  &                            \\[0.5ex]
                            &           $141=P_{883}(1,-1)+P_{883}(28,-2)$                                  &                            \\[0.5ex]
                            &           $154=P_{883}(1,-1)+P_{883}(13,0)+P_{883}(28,-2)$                    &                            \\[0.5ex]
                            &           $167=P_{883}(1,-1)+2P_{883}(13,0)+P_{883}(28,-2)$                   &                            \\[0.5ex]
                            &           $180=P_{883}(1,-1)+P_{883}(10,3)$                                   &                            \\[0.5ex]
                            &           $193=P_{883}(1,-1)+P_{883}(10,3)+P_{883}(13,0)$                     &                            \\[0.5ex]
                            &           $206=P_{883}(1,-1)+P_{883}(10,3)+2P_{883}(13,0)$                    &                            \\[0.5ex]
                            &           $219=P_{883}(13,0)+P_{883}(15,-2)+P_{883}(26,0)+P_{883}(27,-1)$     &                            \\[0.5ex]
                            &           $232=P_{883}(1,-1)+P_{883}(10,3)+P_{883}(26,0)$                     &                            \\[0.5ex]
                            &           $245=P_{883}(1,-1)+P_{883}(10,3)+P_{883}(13,0)+P_{883}(26,0)$       &                            \\[0.5ex]
                            &           $258=P_{883}(1,-1)+P_{883}(28,-2)+P_{883}(39,0)$                    &                            \\[0.5ex]
                            &           $271=P_{883}(1,-1)+P_{883}(13,0)+P_{883}(28,-2)+P_{883}(39,0)$      &                            \\[0.5ex]
                            &           Others cannot be represented.                                       &                            \\[0.5ex]

\hline\vskip2pt $r(12)=116$ & \vskip2pt \,\,\,$51=3P_{883}(1,-1)$                                           & \vskip2pt $P_{883}(24,2)$  \\[0.5ex]
                            &           \,\,\,$64=3P_{883}(1,-1)+P_{883}(13,0)$                             &                            \\[0.5ex]
                            &           \,\,\,$77=P_{883}(1,-1)+P_{883}(12,1)+P_{883}(14,-1)$               &                            \\[0.5ex]
                            &           \,\,\,$90=P_{883}(1,-1)+P_{883}(12,1)+P_{883}(13,0)+P_{883}(14,-1)$ &                            \\[0.5ex]
                            &           $103=3P_{883}(1,-1)+P_{883}(13,0)$                                  &                            \\[0.5ex]
                            &           Others cannot be represented.                                       &                            \\[0.5ex]
\hline
\end{tabular}
\end{center}}

So, by Lemma \ref{Lem4}, $\mathfrak{S}_{883}(1)=\big\{\mathcal{O}v_1^{r_1}:r_1\geq1\big\}\bigcup
\big[\bigcup^3_{j=2}\big\{\mathfrak{U}_jv_j^{r_j}:r_j\geq13\,\,\text{and}\,\,r_j\neq14,15,16,18,19,20,21,22,23,24,25,27,28,32,33,35,36,37,38,40,41,45,49,50,$\newline
$53,54,66\big\}\big]$ and $g_{883}(1)=4$.

\vskip8pt\noindent{\bf Case 16.} $E=\mathbb{Q}\big(\sqrt{-907}\big)$ with $\mathcal{O}=\mathbb{Z}+\mathbb{Z}\omega$ for $\omega=\frac{1+\sqrt{-907}}{2}$.

In this case, one has $m_{907}=4$, $k=13$, $n=9$ and
\begin{equation*}
h\big(v_2^{r_2}\big)=\frac{r_2}{13}=\sum_{\ell=1}^mN\Big(\frac{\gamma_\ell}{13}\Big)
=\sum_{\ell=1}^m\bigg(\frac{a_\ell^2}{169}+\frac{a_\ell b_\ell}{169}+\frac{227b_\ell^2}{169}\bigg)=:\sum_{\ell=1}^mP_{907}(a_\ell,b_\ell),
\end{equation*}
where $a_\ell$ and $b_\ell$ are integers satisfying $13|(a_\ell+5b_\ell)$.
For $r<227$ and $r=4^a(8b+7)$ with $a,b\in\mathbb{Z}_{\geq0}$, $r=7,15,23,28,31,39,47,55,60,63,71,79,87,92,95,103,111,112,$
$119,124,127,135,143,151,156,159,167,175,183,188,191,199,207,215,220,223$, with detailed representations available as before; others are given in the table below
{\small\begin{center}
\setlength{\arrayrulewidth}{0.2mm}
\setlength{\tabcolsep}{2pt}
\renewcommand{\arraystretch}{0.81}
\begin{tabular}{|m{1.72cm}|m{8.28cm}|m{1.81cm}|}
\hline\vskip2pt $r(\delta)$ & \vskip2pt $r_2<r(\delta)$ and $r_2\equiv\delta~(\mathrm{mod}~13)$            & \vskip2pt $r_2=r(\delta)$  \\[0.5ex]

\hline\vskip2pt $r(1)=53$   & \vskip2pt None can be represented.                                           & \vskip2pt $P_{907}(21,1)$  \\[0.5ex]

\hline\vskip2pt $r(2)=41$   & \vskip2pt None can be represented.                                           & \vskip2pt $P_{907}(18,-1)$ \\[0.5ex]

\hline\vskip2pt $r(3)=107$  & \vskip2pt \,\,\,$42=P_{907}(5,-1)+P_{907}(8,1)$                              & \vskip2pt $P_{907}(23,-2)$ \\[0.5ex]
                            &           \,\,\,$55=P_{907}(5,-1)+P_{907}(8,1)+P_{907}(13,0)$                &                            \\[0.5ex]
                            &           \,\,\,$68=P_{907}(5,-1)+P_{907}(8,1)+2P_{907}(13,0)$               &                            \\[0.5ex]
                            &           \,\,\,$81=P_{907}(5,-1)+P_{907}(8,1)+3P_{907}(13,0)^{\,2}$         &                            \\[0.5ex]
                            &           \,\,\,$94=P_{907}(5,-1)+P_{907}(8,1)+P_{907}(26,0)$                &                            \\[0.5ex]
                            &           Others cannot be represented.                                      &                            \\[0.5ex]

\hline\vskip2pt $r(4)=212$  & \vskip2pt \,\,\,$69=3P_{907}(8,1)$                                           & \vskip2pt $P_{907}(42,2)$  \\[0.5ex]
                            &           \,\,\,$82=2P_{907}(18,-1)$                                         &                            \\[0.5ex]
                            &           \,\,\,$95=P_{907}(13,0)+2P_{907}(18,-1)$                           &                            \\[0.5ex]
                            &           $108=2P_{907}(13,0)+2P_{907}(18,-1)$                               &                            \\[0.5ex]
                            &           $121=P_{907}(5,-1)+P_{907}(10,-2)+2P_{907}(13,0)$                  &                            \\[0.5ex]
                            &           $134=2P_{907}(18,-1)+P_{907}(26,0)$                                &                            \\[0.5ex]
                            &           $147=P_{907}(13,0)+2P_{907}(18,-1)+P_{907}(26,0)$                  &                            \\[0.5ex]
                            &           $160=P_{907}(5,-1)+P_{907}(10,-2)+P_{907}(13,0)+P_{907}(26,0)$     &                            \\[0.5ex]
                            &           $173=P_{907}(3,2)+P_{907}(13,0)+P_{907}(31,-1)$                    &                            \\[0.5ex]
                            &           $186=P_{907}(3,2)+2P_{907}(13,0)+P_{907}(31,-1)$                   &                            \\[0.5ex]
                            &           $199=P_{907}(5,-1)+P_{907}(10,-2)+2P_{907}(26,0)$                  &                            \\[0.5ex]
                            &           Others cannot be represented.                                      &                            \\[0.5ex]

\hline\vskip2pt $r(5)=109$  & \vskip2pt \,\,\,$57=3P_{907}(5,-1)$                                          & \vskip2pt $P_{907}(34,1)$  \\[0.5ex]
                            &           \,\,\,$70=3P_{907}(5,-1)+P_{907}(13,0)$                            &                            \\[0.5ex]
                            &           \,\,\,$83=P_{907}(5,-1)+P_{907}(8,1)+P_{907}(18,-1)$               &                            \\[0.5ex]
                            &           \,\,\,$96=P_{907}(5,-1)+P_{907}(8,1)+P_{907}(13,0)+P_{907}(18,-1)$ &                            \\[0.5ex]
                            &           Others cannot be represented.                                      &                            \\[0.5ex]

\hline\vskip2pt $r(6)=19$   & \vskip2pt None can be represented.                                           & \vskip2pt $P_{907}(5,-1)$  \\[0.5ex]

\hline\vskip2pt $r(7)=163$  & \vskip2pt \,\,\,$46=2P_{907}(8,1)$                                           & \vskip2pt $P_{907}(44,-1)$ \\[0.5ex]
                            &           \,\,\,$59=2P_{907}(8,1)+P_{907}(13,0)$                             &                            \\[0.5ex]
\hline
\end{tabular}
\end{center}}

\noindent\rule{142pt}{0.5pt}
\vskip2pt\noindent{\footnotesize $^2$This is where one has $g_{907}(1)=5$.}
\newpage\noindent{\color{blue}table continued}

{\small\begin{center}
\setlength{\arrayrulewidth}{0.2mm}
\setlength{\tabcolsep}{2pt}
\renewcommand{\arraystretch}{0.81}
\begin{tabular}{|m{1.72cm}|m{8.28cm}|m{1.81cm}|}
\hline\vskip2pt $r(7)=163$  & \vskip2pt \,\,\,$72=2P_{907}(8,1)+2P_{907}(13,0)$                            &                            \\[0.5ex]
{\color{blue}(continued)}   &           \,\,\,$85=P_{907}(5,-1)+P_{907}(13,0)+P_{907}(21,1)$               &                            \\[0.5ex]
                            &           \,\,\,$98=2P_{907}(8,1)+P_{907}(26,0)$                             &                            \\[0.5ex]
                            &           $111=2P_{907}(8,1)+P_{907}(13,0)+P_{907}(26,0)$                    &                            \\[0.5ex]
                            &           $124=P_{907}(5,-1)+P_{907}(21,1)+P_{907}(26,0)$                    &                            \\[0.5ex]
                            &           $137=P_{907}(5,-1)+P_{907}(13,0)+P_{907}(21,1)+P_{907}(26,0)$      &                            \\[0.5ex]
                            &           $150=2P_{907}(8,1)+2P_{907}(26,0)$                                 &                            \\[0.5ex]
                            &           Others cannot be represented.                                      &                            \\[0.5ex]

\hline\vskip2pt $r(8)=164$  & \vskip2pt \,\,\,$60=P_{907}(5,-1)+P_{907}(18,-1)$                            & \vskip2pt $P_{907}(36,-2)$ \\[0.5ex]
                            &           \,\,\,$73=P_{907}(5,-1)+P_{907}(13,0)+P_{907}(18,-1)$              &                            \\[0.5ex]
                            &           \,\,\,$86=P_{907}(5,-1)+2P_{907}(13,0)+P_{907}(18,-1)$             &                            \\[0.5ex]
                            &           \,\,\,$99=P_{907}(8,1)+P_{907}(10,-2)$                             &                            \\[0.5ex]
                            &           $112=P_{907}(8,1)+P_{907}(10,-2)+P_{907}(13,0)$                    &                            \\[0.5ex]
                            &           $125=P_{907}(8,1)+P_{907}(10,-2)+2P_{907}(13,0)$                   &                            \\[0.5ex]
                            &           $138=P_{907}(8,1)+2P_{907}(13,0)+P_{907}(31,-1)$                   &                            \\[0.5ex]
                            &           $151=P_{907}(8,1)+P_{907}(10,-2)+P_{907}(26,0)$                    &                            \\[0.5ex]
                            &           Others cannot be represented.                                      &                            \\[0.5ex]

\hline\vskip2pt $r(9)=139$  & \vskip2pt \,\,\,$61=2P_{907}(5,-1)+P_{907}(8,1)$                             & \vskip2pt $P_{907}(29,2)$  \\[0.5ex]
                            &           \,\,\,$74=2P_{907}(5,-1)+P_{907}(8,1)+P_{907}(13,0)$               &                            \\[0.5ex]
                            &           \,\,\,$87=2P_{907}(8,1)+P_{907}(18,-1)$                            &                            \\[0.5ex]
                            &           $100=2P_{907}(8,1)+P_{907}(13,0)+P_{907}(18,-1)$                   &                            \\[0.5ex]
                            &           $113=2P_{907}(5,-1)+P_{907}(8,1)+P_{907}(26,0)$                    &                            \\[0.5ex]
                            &           $126=P_{907}(5,-1)+P_{907}(23,-2)$                                 &                            \\[0.5ex]
                            &           Others cannot be represented.                                      &                            \\[0.5ex]

\hline\vskip2pt $r(10)=23$  & \vskip2pt None can be represented.                                           & \vskip2pt $P_{907}(8,1)$   \\[0.5ex]

\hline\vskip2pt $r(11)=76$  & \vskip2pt None can be represented.                                           & \vskip2pt $P_{907}(10,-2)$ \\[0.5ex]

\hline\vskip2pt $r(12)=207$ & \vskip2pt \,\,\,$38=2P_{907}(5,-1)$                                          & \vskip2pt $P_{907}(24,3)$  \\[0.5ex]
                            &           \,\,\,$51=2P_{907}(5,-1)+P_{907}(13,0)$                            &                            \\[0.5ex]
                            &           \,\,\,$64=2P_{907}(5,-1)+2P_{907}(13,0)$                           &                            \\[0.5ex]
                            &           \,\,\,$77=P_{907}(8,1)+P_{907}(13,0)+P_{907}(18,-1)$               &                            \\[0.5ex]
                            &           \,\,\,$90=2P_{907}(5,-1)+P_{907}(26,0)$                            &                            \\[0.5ex]
                            &           $103=2P_{907}(5,-1)+P_{907}(13,0)+P_{907}(26,0)$                   &                            \\[0.5ex]
                            &           $116=P_{907}(8,1)+P_{907}(18,-1)+P_{907}(26,0)$                    &                            \\[0.5ex]
                            &           $129=P_{907}(10,-2)+P_{907}(21,1)$                                 &                            \\[0.5ex]
                            &           $142=P_{907}(10,-2)+P_{907}(13,0)+P_{907}(21,1)$                   &                            \\[0.5ex]
                            &           $155=P_{907}(10,-2)+2P_{907}(13,0)+P_{907}(21,1)$                  &                            \\[0.5ex]
                            &           $168=P_{907}(8,1)+P_{907}(18,-1)+2P_{907}(26,0)$                   &                            \\[0.5ex]
                            &           $181=P_{907}(10,-2)+P_{907}(21,1)+P_{907}(26,0)$                   &                            \\[0.5ex]
                            &           $194=P_{907}(10,-2)+P_{907}(13,0)+P_{907}(21,1)+P_{907}(26,0)$     &                            \\[0.5ex]
                            &           Others cannot be represented.                                      &                            \\[0.5ex]
\hline
\end{tabular}
\end{center}}

So, by Lemma \ref{Lem4}, $\mathfrak{S}_{907}(1)=\big\{\mathcal{O}v_1^{r_1}:r_1\geq1\big\}\bigcup
\big[\bigcup^3_{j=2}\big\{\mathfrak{U}_jv_j^{r_j}:r_j\geq13\,\,\text{and}\,\,r_j\neq14,15,16,17,18,20,21,22,24,25,27,28,29,30,31,33,34,35,37,40,43,44,47,48,$\newline
$50,56,63\big\}\big]$ and $g_{907}(1)=5$.
\end{proof}

\section{Appendix 1}\label{Sec:App1}
Below, we record a representative from each different isometry class of non-free unary unimodular Hermitian lattices over $E=\mathbb{Q}\big(\sqrt{-d}\big)$, with class number $2$.
One can find details in the classical work of Baker \cite{aB} and Stark \cite{hmS}.
{\small\begin{center}
\setlength{\arrayrulewidth}{0.2mm}
\setlength{\tabcolsep}{2pt}
\renewcommand{\arraystretch}{0.88}
\begin{tabular}{|m{1.12cm}|m{1.52cm}|m{6.92cm}|}
\hline\vskip2pt Cases   & \vskip2pt $\mathbb{Q}\big(\sqrt{-d}\big)$   & \vskip2pt Isometry class representative                                            \\[0.5ex]
\hline\vskip2pt Case 1  & \vskip2pt $\mathbb{Q}\big(\sqrt{-5}\big)$   & \vskip2pt $L_2=\mathfrak{U}_2v_2$ with $h(v_2)=1/2$ and $\mathfrak{U}_2=(2,1+w)$   \\[0.5ex]
\hline\vskip2pt Case 2  & \vskip2pt $\mathbb{Q}\big(\sqrt{-6}\big)$   & \vskip2pt $L_2=\mathfrak{U}_2v_2$ with $h(v_2)=1/2$ and $\mathfrak{U}_2=(2,w)$     \\[0.5ex]
\hline\vskip2pt Case 3  & \vskip2pt $\mathbb{Q}\big(\sqrt{-10}\big)$  & \vskip2pt $L_2=\mathfrak{U}_2v_2$ with $h(v_2)=1/2$ and $\mathfrak{U}_2=(2,w)$     \\[0.5ex]
\hline\vskip2pt Case 4  & \vskip2pt $\mathbb{Q}\big(\sqrt{-13}\big)$  & \vskip2pt $L_2=\mathfrak{U}_2v_2$ with $h(v_2)=1/2$ and $\mathfrak{U}_2=(2,1+w)$   \\[0.5ex]
\hline\vskip2pt Case 5  & \vskip2pt $\mathbb{Q}\big(\sqrt{-15}\big)$  & \vskip2pt $L_2=\mathfrak{U}_2v_2$ with $h(v_2)=1/2$ and $\mathfrak{U}_2=(2,1+w)$   \\[0.5ex]
\hline\vskip2pt Case 6  & \vskip2pt $\mathbb{Q}\big(\sqrt{-22}\big)$  & \vskip2pt $L_2=\mathfrak{U}_2v_2$ with $h(v_2)=1/2$ and $\mathfrak{U}_2=(2,w)$     \\[0.5ex]
\hline\vskip2pt Case 7  & \vskip2pt $\mathbb{Q}\big(\sqrt{-35}\big)$  & \vskip2pt $L_2=\mathfrak{U}_2v_2$ with $h(v_2)=1/5$ and $\mathfrak{U}_2=(5,2+w)$   \\[0.5ex]
\hline\vskip2pt Case 8  & \vskip2pt $\mathbb{Q}\big(\sqrt{-37}\big)$  & \vskip2pt $L_2=\mathfrak{U}_2v_2$ with $h(v_2)=1/2$ and $\mathfrak{U}_2=(2,1+w)$   \\[0.5ex]
\hline\vskip2pt Case 9  & \vskip2pt $\mathbb{Q}\big(\sqrt{-51}\big)$  & \vskip2pt $L_2=\mathfrak{U}_2v_2$ with $h(v_2)=1/5$ and $\mathfrak{U}_2=(5,1+w)$   \\[0.5ex]
\hline\vskip2pt Case 10 & \vskip2pt $\mathbb{Q}\big(\sqrt{-58}\big)$  & \vskip2pt $L_2=\mathfrak{U}_2v_2$ with $h(v_2)=1/2$ and $\mathfrak{U}_2=(2,w)$     \\[0.5ex]
\hline\vskip2pt Case 11 & \vskip2pt $\mathbb{Q}\big(\sqrt{-91}\big)$  & \vskip2pt $L_2=\mathfrak{U}_2v_2$ with $h(v_2)=1/7$ and $\mathfrak{U}_2=(7,3+w)$   \\[0.5ex]
\hline\vskip2pt Case 12 & \vskip2pt $\mathbb{Q}\big(\sqrt{-115}\big)$ & \vskip2pt $L_2=\mathfrak{U}_2v_2$ with $h(v_2)=1/5$ and $\mathfrak{U}_2=(5,-3+w)$  \\[0.5ex]
\hline\vskip2pt Case 13 & \vskip2pt $\mathbb{Q}\big(\sqrt{-123}\big)$ & \vskip2pt $L_2=\mathfrak{U}_2v_2$ with $h(v_2)=1/3$ and $\mathfrak{U}_2=(3,1+w)$   \\[0.5ex]
\hline\vskip2pt Case 14 & \vskip2pt $\mathbb{Q}\big(\sqrt{-187}\big)$ & \vskip2pt $L_2=\mathfrak{U}_2v_2$ with $h(v_2)=1/7$ and $\mathfrak{U}_2=(7,-2+w)$  \\[0.5ex]
\hline\vskip2pt Case 15 & \vskip2pt $\mathbb{Q}\big(\sqrt{-235}\big)$ & \vskip2pt $L_2=\mathfrak{U}_2v_2$ with $h(v_2)=1/5$ and $\mathfrak{U}_2=(5,2+w)$   \\[0.5ex]
\hline\vskip2pt Case 16 & \vskip2pt $\mathbb{Q}\big(\sqrt{-267}\big)$ & \vskip2pt $L_2=\mathfrak{U}_2v_2$ with $h(v_2)=1/3$ and $\mathfrak{U}_2=(3,1+w)$   \\[0.5ex]
\hline\vskip2pt Case 17 & \vskip2pt $\mathbb{Q}\big(\sqrt{-403}\big)$ & \vskip2pt $L_2=\mathfrak{U}_2v_2$ with $h(v_2)=1/11$ and $\mathfrak{U}_2=(11,6+w)$ \\[0.5ex]
\hline\vskip2pt Case 18 & \vskip2pt $\mathbb{Q}\big(\sqrt{-427}\big)$ & \vskip2pt $L_2=\mathfrak{U}_2v_2$ with $h(v_2)=1/7$ and $\mathfrak{U}_2=(7,3+w)$   \\[0.5ex]
\hline
\end{tabular}
\end{center}}
\begin{center}
{\bf{\color{blue}Table 1.}} Non-free unary unimodular Hermitian lattices of class number $2$.
\end{center}

\newpage
\section{Appendix 2}\label{Sec:App2}
Below, we record a representative from each different isometry class of non-free unary unimodular Hermitian lattices over $E=\mathbb{Q}\big(\sqrt{-d}\big)$, with class number $3$.
One can find details in the work of Arno, Robinson and Wheeler \cite{ARW}.
{\small\begin{center}
\setlength{\arrayrulewidth}{0.2mm}
\setlength{\tabcolsep}{2pt}
\renewcommand{\arraystretch}{0.88}
\begin{tabular}{|m{1.12cm}|m{1.52cm}|m{7.14cm}|}
\hline\vskip2pt Cases   &\vskip2pt $\mathbb{Q}\big(\sqrt{-d}\big)$   & \vskip2pt Isometry class representative                                             \\[0.5ex]

\hline\vskip2pt Case 1  &\vskip2pt $\mathbb{Q}\big(\sqrt{-23}\big)$  & \vskip2pt $L_2=\mathfrak{U}_2v_2$ with $h(v_2)=1/2$ and $\mathfrak{U}_2=(2,w)$      \\[0.5ex]
                        &                                            &           $L_3=\mathfrak{U}_3v_3$ with $h(v_3)=1/2$ and $\mathfrak{U}_3=(2,-1+w)$   \\[0.5ex]

\hline\vskip2pt Case 2  &\vskip2pt $\mathbb{Q}\big(\sqrt{-31}\big)$  & \vskip2pt $L_2=\mathfrak{U}_2v_2$ with $h(v_2)=1/2$ and $\mathfrak{U}_2=(2,w)$      \\[0.5ex]
                        &                                            &           $L_3=\mathfrak{U}_3v_3$ with $h(v_3)=1/2$ and $\mathfrak{U}_3=(2,-1+w)$   \\[0.5ex]

\hline\vskip2pt Case 3  &\vskip2pt $\mathbb{Q}\big(\sqrt{-59}\big)$  & \vskip2pt $L_2=\mathfrak{U}_2v_2$ with $h(v_2)=1/3$ and $\mathfrak{U}_2=(3,w)$      \\[0.5ex]
                        &                                            &           $L_3=\mathfrak{U}_3v_3$ with $h(v_3)=1/3$ and $\mathfrak{U}_3=(3,-1+w)$   \\[0.5ex]

\hline\vskip2pt Case 4  &\vskip2pt $\mathbb{Q}\big(\sqrt{-83}\big)$  & \vskip2pt $L_2=\mathfrak{U}_2v_2$ with $h(v_2)=1/3$ and $\mathfrak{U}_2=(3,w)$      \\[0.5ex]
                        &                                            &           $L_3=\mathfrak{U}_3v_3$ with $h(v_3)=1/3$ and $\mathfrak{U}_3=(3,-1+w)$   \\[0.5ex]

\hline\vskip2pt Case 5  &\vskip2pt $\mathbb{Q}\big(\sqrt{-107}\big)$ & \vskip2pt $L_2=\mathfrak{U}_2v_2$ with $h(v_2)=1/3$ and $\mathfrak{U}_2=(3,w)$      \\[0.5ex]
                        &                                            &           $L_3=\mathfrak{U}_3v_3$ with $h(v_3)=1/3$ and $\mathfrak{U}_3=(3,-1+w)$   \\[0.5ex]

\hline\vskip2pt Case 6  &\vskip2pt $\mathbb{Q}\big(\sqrt{-139}\big)$ & \vskip2pt $L_2=\mathfrak{U}_2v_2$ with $h(v_2)=1/5$ and $\mathfrak{U}_2=(5,w)$      \\[0.5ex]
                        &                                            &           $L_3=\mathfrak{U}_3v_3$ with $h(v_3)=1/5$ and $\mathfrak{U}_3=(5,-1+w)$   \\[0.5ex]

\hline\vskip2pt Case 7  &\vskip2pt $\mathbb{Q}\big(\sqrt{-211}\big)$ & \vskip2pt $L_2=\mathfrak{U}_2v_2$ with $h(v_2)=1/5$ and $\mathfrak{U}_2=(5,1+w)$    \\[0.5ex]
                        &                                            &           $L_3=\mathfrak{U}_3v_3$ with $h(v_3)=1/5$ and $\mathfrak{U}_3=(5,-2+w)$   \\[0.5ex]

\hline\vskip2pt Case 8  &\vskip2pt $\mathbb{Q}\big(\sqrt{-283}\big)$ & \vskip2pt $L_2=\mathfrak{U}_2v_2$ with $h(v_2)=1/7$ and $\mathfrak{U}_2=(7,2+w)$    \\[0.5ex]
                        &                                            &           $L_3=\mathfrak{U}_3v_3$ with $h(v_3)=1/7$ and $\mathfrak{U}_3=(7,-3+w)$   \\[0.5ex]

\hline\vskip2pt Case 9  &\vskip2pt $\mathbb{Q}\big(\sqrt{-307}\big)$ & \vskip2pt $L_2=\mathfrak{U}_2v_2$ with $h(v_2)=1/7$ and $\mathfrak{U}_2=(7,w)$      \\[0.5ex]
                        &                                            &           $L_3=\mathfrak{U}_3v_3$ with $h(v_3)=1/7$ and $\mathfrak{U}_3=(7,-1+w)$   \\[0.5ex]

\hline\vskip2pt Case 10 &\vskip2pt $\mathbb{Q}\big(\sqrt{-331}\big)$ & \vskip2pt $L_2=\mathfrak{U}_2v_2$ with $h(v_2)=1/5$ and $\mathfrak{U}_2=(5,1+w)$    \\[0.5ex]
                        &                                            &           $L_3=\mathfrak{U}_3v_3$ with $h(v_3)=1/5$ and $\mathfrak{U}_3=(5,-2+w)$   \\[0.5ex]

\hline\vskip2pt Case 11 &\vskip2pt $\mathbb{Q}\big(\sqrt{-379}\big)$ & \vskip2pt $L_2=\mathfrak{U}_2v_2$ with $h(v_2)=1/5$ and $\mathfrak{U}_2=(5,w)$      \\[0.5ex]
                        &                                            &           $L_3=\mathfrak{U}_3v_3$ with $h(v_3)=1/5$ and $\mathfrak{U}_3=(5,-1+w)$   \\[0.5ex]

\hline\vskip2pt Case 12 &\vskip2pt $\mathbb{Q}\big(\sqrt{-499}\big)$ & \vskip2pt $L_2=\mathfrak{U}_2v_2$ with $h(v_2)=1/5$ and $\mathfrak{U}_2=(5,w)$      \\[0.5ex]
                        &                                            &           $L_3=\mathfrak{U}_3v_3$ with $h(v_3)=1/5$ and $\mathfrak{U}_3=(5,-1+w)$   \\[0.5ex]

\hline\vskip2pt Case 13 &\vskip2pt $\mathbb{Q}\big(\sqrt{-547}\big)$ & \vskip2pt $L_2=\mathfrak{U}_2v_2$ with $h(v_2)=1/11$ and $\mathfrak{U}_2=(11,2+w)$  \\[0.5ex]
                        &                                            &           $L_3=\mathfrak{U}_3v_3$ with $h(v_3)=1/11$ and $\mathfrak{U}_3=(11,-3+w)$ \\[0.5ex]

\hline\vskip2pt Case 14 &\vskip2pt $\mathbb{Q}\big(\sqrt{-643}\big)$ & \vskip2pt $L_2=\mathfrak{U}_2v_2$ with $h(v_2)=1/7$ and $\mathfrak{U}_2=(7,w)$      \\[0.5ex]
                        &                                            &           $L_3=\mathfrak{U}_3v_3$ with $h(v_3)=1/7$ and $\mathfrak{U}_3=(7,-1+w)$   \\[0.5ex]

\hline\vskip2pt Case 15 &\vskip2pt $\mathbb{Q}\big(\sqrt{-883}\big)$ & \vskip2pt $L_2=\mathfrak{U}_2v_2$ with $h(v_2)=1/13$ and $\mathfrak{U}_2=(13,w)$    \\[0.5ex]
                        &                                            &           $L_3=\mathfrak{U}_3v_3$ with $h(v_3)=1/13$ and $\mathfrak{U}_3=(13,-1+w)$ \\[0.5ex]

\hline\vskip2pt Case 16 &\vskip2pt $\mathbb{Q}\big(\sqrt{-907}\big)$ & \vskip2pt $L_2=\mathfrak{U}_2v_2$ with $h(v_2)=1/13$ and $\mathfrak{U}_2=(13,4+w)$  \\[0.5ex]
                        &                                            &           $L_3=\mathfrak{U}_3v_3$ with $h(v_3)=1/13$ and $\mathfrak{U}_3=(13,-5+w)$ \\[0.5ex]
\hline
\end{tabular}
\end{center}}
\begin{center}
{\bf{\color{blue}Table 2.}} Non-free unary unimodular Hermitian lattices of class number $3$.
\end{center}

\begin{acknowledgments}
I truly appreciate the anonymous referees and Professor Kirsten Eisentr\"{a}ger for their invaluable suggestions, and acknowledge the support of $2020-2021$ Faculty Research Fellowship from College of Arts \& Sciences at Texas A\&M University-San Antonio.
\end{acknowledgments}

\small


\begin{thebibliography}{99}
\bibitem{ARW}S. Arno, M.L. Robinson, and F.S. Wheeler.
\emph{Imaginary quadratic fields with small odd class number}.
Acta Arith. \textbf{83} (1998), 295--330.


\bibitem{aB}
A. Baker.
\emph{Imaginary quadratic fields with class number $2$}.
Ann. of Math. (2) \textbf{94} (1971), 139--152.


\bibitem{BCIL}
C.N. Beli, W.K. Chan, M.I. Icaza, and J. Liu.
\emph{On a Waring's problem for integral quadratic and hermitian forms}.
Trans. Amer. Math. Soc. \textbf{371} (2019), 5505--5527.


\bibitem{mB}
M. Bhargava.
\emph{On the Conway-Schneeberger fifteen theorem}.
Quadratic forms and their applications, Contemp. Math. \textbf{272}, 27--37.
American Mathematical Society, Providence, RI, 2000.


\bibitem{BH}
M. Bhargava and J. Hanke.
\emph{Universal quadratic forms and the $290$-theorem}.
Accepted by Invent. Math. {\footnotesize\url{http://math.stanford.edu/~vakil/files/290-Theorem-preprint.pdf}}


\bibitem{jhC}
J.H. Conway.
\emph{Universal quadratic forms and the fifteen theorem}.
Quadratic forms and their applications, Contemp. Math. \textbf{272}, 23--26.
American Mathematical Society, Providence, RI, 2000.


\bibitem{ljG}
L.J. Gerstein.
\emph{Classes of definite Hermitian forms}.
Amer. J. Math. \textbf{100} (1978), 81--97.


\bibitem{fG}
F. G\"{o}tzky.
\emph{\"{U}ber eine zahlentheoretische Anwendung von Modulfunktionen zweier Ver\"{a}nderlicher}.
Math. Ann. \textbf{100} (1928), 411--437.


\bibitem{JKS}
W.C. Jagy, I. Kaplansky, and A. Schiemann.
\emph{There are $913$ regular ternary forms}.
Mathematika \textbf{44} (1997), 332--341.


\bibitem{KKP}
B.M. Kim, J.Y. Kim, and P.S. Park.
\emph{The fifteen theorem for universal Hermitian lattices over imaginary quadratic fields}.
Math. Comp. \textbf{79} (2010), 1123--1144.


\bibitem{jL}
J. Liu.
\emph{On a Waring's problem for Hermitian lattices}.
Bull. Sci. Math. \textbf{174} (2022), Article 102970 (25 Pages).


\bibitem{hM}
H. Maass.
\emph{\"{U}ber die Darstellung total positiver Zahlen des K\"{o}rpers $R\big(\sqrt{5}\big)$ als Summe von drei Quadraten}.
Abh. Math. Semin. Univ. Hambg. \textbf{14} (1941), 185--191.


\bibitem{otO}
O.T. O'Meara.
\textbf{Introduction to quadratic forms}.
Springer-Verlag, Berlin, 2000.


\bibitem{gS}
G. Shimura.
\emph{Arithmetic of unitary groups}.
Ann. of Math. (2) \textbf{79} (1964), 369--409.


\bibitem{clS}
C.L. Siegel.
\emph{Sums of $m^{th}$ powers of algebraic integers}.
Ann. of Math. (2) \textbf{46} (1945), 313--339.


\bibitem{hmS}H.M. Stark.
\emph{On complex quadratic fields wth class-number two}.
Math. Comp. \textbf{29} (1975), 289--302.


\bibitem{zS15}
Z.-W. Sun.
\emph{On universal sums of polygonal numbers}.
Sci. China Math. \textbf{58} (2015), 1367--1396.


\bibitem{zS17}
Z.-W. Sun.
\emph{On $x(ax+1)+y(by+1)+z(cz+1)$ and $x(ax+b)+y(ay+c)+z(az+d)$}.
J. Number Theory \textbf{171} (2017), 275--283.
\end{thebibliography}
\end{document}